\documentclass[a4paper,11pt]{amsart}
\usepackage[top=28mm,right=27mm,bottom=28mm,left=27mm]{geometry}
%%%%%%%%%%%%%%%%%%%%%%%%%%%%%%%%%%%%%%%%%%%%%%%%%%%%%%%%%%%%%%%%
\usepackage{amssymb}
\usepackage{amsmath}
\usepackage{amsfonts}
\usepackage{url}
\setcounter{MaxMatrixCols}{10}
%TCIDATA{OutputFilter=LATEX.DLL}
%TCIDATA{Version=5.50.0.2960}
%TCIDATA{<META NAME="SaveForMode" \CONTENT="1">}
%TCIDATA{BibliographyScheme=Manual}
%TCIDATA{Created=Monday, April 02, 2012 22:49:39}
%TCIDATA{LastRevised=Wednesday, November 28, 2012 23:17:40}
%TCIDATA{<META NAME="GraphicsSave" \CONTENT="32">}
%TCIDATA{<META NAME="DocumentShell" \CONTENT="Standard LaTeX\Blank - Standard LaTeX Article">}
%TCIDATA{Language=American English}
%TCIDATA{CSTFile=40 LaTeX article.cst}

\newcommand{\PSp}{\mathrm{PSp}}
\newcommand{\PGL}{\mathrm{PGL}}
\newcommand{\Sp}{\mathrm{Sp}}
\newcommand{\SO}{\mathrm{SO}}
\newcommand{\Aut}{\mathrm{Aut}}
\newcommand{\Inndiag}{\mathrm{Inndiag}}
\newcommand{\diag}{\mathrm{diag}}
\newcommand{\SL}{\mathrm{SL}}
\newcommand{\PSL}{\mathrm{PSL}}
\newcommand{\SU}{\mathrm{SU}}
\newcommand{\PGU}{\mathrm{PGU}}
\newcommand{\PSU}{\mathrm{PSU}}
\newcommand{\PSO}{\mathrm{PSO}}
\newcommand{\PGO}{\mathrm{PGO}}
\newcommand{\PCO}{\mathrm{PCO}}
\newcommand{\PGSp}{\mathrm{PGSp}}
\newcommand{\GO}{\mathrm{GO}}
\newcommand{\GU}{\mathrm{GU}}
\newcommand{\GL}{\mathrm{GL}}

\newtheorem{theorem}{Theorem}

\newtheorem{corollary}[theorem]{Corollary}

\newtheorem{definition}[theorem]{Definition}

\newtheorem{lemma}[theorem]{Lemma}

\newtheorem{remark}[theorem]{Remark}

\begin{document}

\title{Criteria for solvable radical membership via $p$-elements}
\author{Simon Guest}
%EndAName
\address{Mathematics\\
University of Southampton\\
Highfield\\
SO17 1BJ\\
United Kingdom}
\email{s.d.guest@soton.ac.uk}
%EndExpansion
\author{Dan Levy}
%EndAName
\address{The School of Computer Sciences \\
The Academic College of Tel-Aviv-Yaffo \\
2 Rabenu Yeruham St.\\
Tel-Aviv 61083\\
Israel}
%TCIMACRO{\TeXButton{TeX field}{\url{danlevy@trendline.co.il}}}%BeginExpansion
\email{danlevy@trendline.co.il}

%\date{\today}
\maketitle

\begin{abstract}
Guralnick, Kunyavskii, Plotkin and Shalev %\cite{GKPS} 
have shown that the solvable radical of a finite group $G$ can be characterized as the set of all $x\in G$ such that $\langle x,y\rangle $ is solvable for all $y\in G$. We prove two generalizations of this result. Firstly, it is enough to check the solvability of $\langle x,y\rangle$ for every $p$-element $y\in G$ for every odd prime $p$. Secondly, if $x$ has odd order, then it is enough to check the solvability of $\langle x,y\rangle $ for every $2$-element $y\in G$.
\end{abstract}

\section{Introduction}

Let $G$ be a finite group %\footnote{%All groups, unless explicitly stated or implied by context, are assumed
%finite.} 
 and let $R(G)$ denote the solvable radical of $G$; that is, the (unique) largest, solvable, normal subgroup of $G$. We present two characterizations of $R(G)$. The first one is given in Theorem \ref{Th_R(G)Characterization<x,y>yoddu}.

\begin{theorem}
\label{Th_R(G)Characterization<x,y>yoddu}Let $G$ be a finite group and let $ x\in G$. Then $x\in R(G)$ if and only if for all odd primes $p$ and all $p$-elements $y\in G$, the subgroup $\langle x,y\rangle $ is solvable.
\end{theorem}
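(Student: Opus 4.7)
The plan is to follow the template used in Guralnick--Kunyavskii--Plotkin--Shalev (GKPS): an immediate forward direction, an inductive reduction to the case $R(G)=1$, a further reduction to an almost simple section, and finally a classification-dependent analysis of the almost simple case. The new content beyond GKPS sits entirely in the almost simple step, where the second generator must now be chosen to be an odd $p$-element for some odd prime $p$.

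For the forward direction, if $x\in R(G)$ and $y\in G$ is arbitrary, then $\langle x,y\rangle/(\langle x,y\rangle\cap R(G))$ is cyclic, so $\langle x,y\rangle$ is solvable. For the converse I argue by induction on $|G|$. The key observation for the inductive step is that every $p$-element $\bar y\in G/R(G)$ is the image of a $p$-element $y\in G$: take the $p$-part of any preimage, which is itself a $p$-element and still maps to $\bar y$. Hence the hypothesis on $x$ descends to $\bar x\in G/R(G)$, so by induction $\bar x\in R(G/R(G))=1$, forcing $x\in R(G)$. We may therefore assume $R(G)=1$ and, aiming for a contradiction, that $x\ne 1$.

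Under these assumptions the socle $N=\mathrm{Soc}(G)$ is a direct product of nonabelian simple groups and $C_G(N)=1$, so $x$ acts nontrivially on $N$. Following GKPS, one reduces to an almost simple section: passing to $N_G(S)/C_G(S)$ for a suitable simple direct factor $S$ of $N$ (or, when $x$ permutes several isomorphic factors, to an appropriate wreath-type quotient) yields an almost simple group $H$ with socle $S$ and a nontrivial image of $x$ still satisfying the hypothesis, the $p$-element condition again transferring because $p$-parts lift to $p$-parts.

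The heart of the matter is then the classification-theoretic claim that for every finite nonabelian simple group $S$, every almost simple group $H$ with socle $S$, and every $1\ne x\in H$, there exist an odd prime $p$ and a $p$-element $y\in H$ with $\langle x,y\rangle$ nonsolvable. This strengthens the generator-existence statement underlying GKPS by constraining the order of $y$ to a power of an odd prime. I expect the proof to split along the classification of finite simple groups: for alternating $S=A_n$, cycle-type considerations produce an odd $p$-cycle $y$ with $\langle x,y\rangle\supseteq A_n$; sporadic groups are dispatched via character tables and maximal-subgroup data; and the substantive case is that of groups of Lie type, where Zsigmondy-type primitive prime divisors supply odd $p$-elements in large tori, and Aschbacher's classification of maximal subgroups is used to force $\langle x,y\rangle$ out of every proper subgroup. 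The Lie-type analysis---especially in small rank, and in characteristic $2$, where the restriction to odd prime order for $y$ is most binding---is where I expect the principal obstacle to lie.
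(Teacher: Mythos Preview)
Your reduction skeleton (forward direction, pass to $G/R(G)$, reduce to an almost simple section) matches the paper, but two points deserve comment.

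First, the case where $x$ genuinely permutes simple factors of the socle is not dispatched by ``an appropriate wreath-type quotient'': there is no almost simple quotient of $\langle x,L\rangle$ carrying a nontrivial image of $x$. The paper (Lemma~\ref{Lem_xCharacterizationNotInN_G(L)}) instead works directly inside $\langle x,L\rangle \le \Aut(L)^t \rtimes S_t$: invoking Theorem~\ref{ThmA*}, it chooses $l_1,l_2\in L$ of some prime order $p\ge 5$ with $\langle l_1,l_2\rangle$ nonsolvable, sets $y=(l_1,1,\ldots,1,l_2^{\sigma_t^{-1}})$, and shows that the projection of $\langle y,y^x\rangle$ onto the first factor contains $\langle l_1,l_2\rangle$. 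Your sketch glosses over this, and it is not clear your quotient idea can be made to work here.

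Second, and more substantially, your plan for the almost simple case---Zsigmondy primes plus Aschbacher-style exclusion of maximal overgroups---is quite different from what the paper does, and misses a major shortcut. The paper first observes that one may take $x$ of prime order. If that prime is at least $5$, Theorem~\ref{ThmA*} already supplies $g$ with $\langle x,x^g\rangle$ nonsolvable, so $y=x^g$ is itself an odd $p$-element and nothing further is needed. This collapses the entire problem to $o(x)\in\{2,3\}$ (Theorem~\ref{Th_InAlmostForxp=2,3ExistsOdduySt<x,y>NS}). For those two orders the paper runs a minimal-counterexample induction \emph{within} the class of almost simple groups: using Lemma~\ref{lem:2.2} to place $x$ nontrivially in a Levi complement of a parabolic, thereby descending to a smaller almost simple group, and reducing ultimately to a short finite list handled by \textsc{Magma}. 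Your generation-style programme could in principle succeed, but it would be far more laborious (you would be re-proving, case by case, facts that the conjugate trick and parabolic descent sidestep), and in low rank or characteristic~$2$ the torus/Zsigmondy method runs into exactly the exceptional configurations that the paper's inductive descent absorbs automatically.
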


The next result gives an $R(G)$ membership criterion for odd order $p$-elements. 

\begin{theorem}
 \label{Th_R(G)Characterization<x,y>xoddpy2} Let $G$ be a finite group, and let $x\in G$  be a $p$-element, where $p$ is an odd prime. The element $x$ is contained in $R(G)$ if and only if the subgroup $\langle x,y\rangle $ is solvable for all $2$-elements $y\in G$.
\end{theorem}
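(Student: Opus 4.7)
\medskip

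\noindent\textbf{Proof plan for Theorem \ref{Th_R(G)Characterization<x,y>xoddpy2}.}
The forward implication is routine. If $x\in R(G)$ and $y\in G$ is any $2$-element, then $\langle x,y\rangle R(G)/R(G)$ is cyclic and $\langle x,y\rangle\cap R(G)$ is solvable, so $\langle x,y\rangle$ is solvable; the hypothesis that $x$ is a $p$-element is not even needed here. For the converse I would argue by contradiction via a minimal counterexample $G$, with the goal of producing a $2$-element $y\in G$ such that $\langle x,y\rangle$ is nonsolvable.

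The first reduction is to the case $R(G)=1$. The image of $x$ in $\bar{G}:=G/R(G)$ is still a $p$-element and the hypothesis transfers to $\bar{G}$, because any $2$-element $\bar{y}\in\bar{G}$ lifts to a $2$-element $y\in G$: the preimage $H$ of $\langle\bar{y}\rangle$ in $G$ is solvable (an extension of the solvable group $R(G)$ by the cyclic $2$-group $\langle\bar{y}\rangle$), so any Sylow $2$-subgroup of $H$ surjects onto $\langle\bar{y}\rangle$ and provides the required $y$. By minimality one may therefore assume $R(G)=1$ and $x\neq 1$, so that $F^{\ast}(G)=E(G)$ is a direct product $S_{1}\times\cdots\times S_{k}$ of nonabelian simple groups on which $G$ acts by permutation and diagonal automorphisms, with $C_{G}(F^{\ast}(G))=1$.

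Next, I would reduce to an almost simple quotient. Since $C_{G}(F^{\ast}(G))=1$, the element $x$ acts nontrivially on at least one factor $S_{i}$; projecting $N_{G}(S_{i})$ into $\mathrm{Aut}(S_{i})$ yields an almost simple group $L$ with socle $T\cong S_{i}$ that contains a nontrivial image $\tilde{x}$ of $x$ of odd prime-power order. (If $x$ permutes the factors nontrivially, a direct wreath-product construction with an involution in some $S_{j}$ already forces nonsolvability; otherwise we are in the situation just described.) The central technical statement I then want is: if $L$ is almost simple with socle $T$ and $\tilde{x}\in L$ is a nontrivial element of odd order, then there exists an involution $y\in T$ with $\langle \tilde{x},y\rangle$ nonsolvable. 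Given this, the Sylow lifting trick above produces a $2$-element $y'\in N_{G}(S_{i})\leq G$ whose image modulo $C_{G}(S_{i})$ equals $y$, and then $\langle x,y'\rangle$ projects onto the nonsolvable group $\langle \tilde{x},y\rangle$, contradicting the hypothesis.

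The main obstacle is exactly this almost simple statement, which is a one-sided strengthening of the key technical input of Guralnick--Kunyavskii--Plotkin--Shalev (where the witness $y$ was unrestricted). It requires CFSG and a case-by-case examination of the families of finite simple groups: for each family one must show that every nontrivial outer or inner element $\tilde{x}$ of odd order fails to normalize every solvable subgroup generated with an involution of $T$, equivalently that some involution class in $T$ admits a representative $y$ with $\langle \tilde{x},y\rangle\supseteq T$ or at least contains a nonabelian composition factor. Tools I would draw on are $(2,p)$-generation results for simple groups, probabilistic generation theorems of Guralnick--Kantor type, and explicit information about conjugacy classes of involutions in Lie-type and sporadic groups; the alternating and sporadic cases should be direct, while the Lie-type case, especially in small rank and characteristic $2$, is where the delicate work will lie.
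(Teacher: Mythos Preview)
Your overall architecture matches the paper's: reduce to $R(G)=1$, separate the case where $x$ moves the simple components of $F^{\ast}(G)$ from the case where it normalizes each, and push the latter down to an almost simple statement proved via CFSG. However, the wreath-product case is not as immediate as you suggest. If $x$ permutes the components nontrivially and you take $y$ to be an involution supported on a single factor $S_{j}$, then conjugating $y$ by powers of $x$ merely translates that involution around the orbit; the projection of $\langle y,y^{x},\ldots\rangle$ onto any fixed factor lies in a cyclic group of order $2$, so $\langle x,y\rangle$ is abelian-by-cyclic and hence solvable. The paper handles this by first reducing to $o(x)=p$ and $G=L^{p}\rtimes\langle x\rangle$ with $x$ acting as the $p$-cycle $(1,2,\ldots,p)$, then invoking the Malle--Saxl--Weigel theorem that every nonabelian simple group $L$ is generated by three involutions (with the single exception $L\cong\PSU_{3}(3)$, which is generated by three $2$-elements), and setting $y=(y_{1},1,\ldots,1,y_{2},y_{3})$. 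The elements $y,y^{x},y^{x^{2}}$ then project onto $y_{1},y_{3},y_{2}$ in the first coordinate, forcing $\langle x,y\rangle$ to be nonsolvable.

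A second point: in the almost simple step you demand an \emph{involution} $y\in T$ with $\langle\tilde{x},y\rangle$ nonsolvable. The paper proves only the weaker statement that some $2$-element works, and it genuinely uses $2$-elements of order greater than $2$ in places: for $G_{0}\cong\PSL_{3}^{\varepsilon}(q)$ with $3\mid q-\varepsilon$ and $\tilde{x}$ irreducible, the chosen witness is a regular unipotent of order $4$ when $r=2$ and an element of order $(q^{2}-1)_{2}\geq 8$ when $r>3$. Insisting on involutions is unnecessary for Theorem~\ref{Th_R(G)Characterization<x,y>xoddpy2} and may be harder to verify. Note also that the paper immediately dispatches all $p\geq 5$ via Theorem~\ref{Th_xorderp>3Andinvolution}, so the entire almost simple analysis takes place at $p=3$; building that observation into your plan would sharpen it considerably.
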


Our second characterization of $R(G)$ follows easily from Theorems  \ref{Th_R(G)Characterization<x,y>yoddu} and \ref{Th_R(G)Characterization<x,y>xoddpy2}. First we need some notation.
%For any positive integer $n$ denote by $\pi (n)$ the set of distinct prime divisors of $n$ ($\pi (1) =\emptyset $). 
Denote the order of  $ x\in G$  by $o(x)$ and 
%We will write $\pi (x)$ for $\pi (o(x) )$, and $\pi (G)$ for $\pi (| G| )$. 
write $o(x) =p_{1}^{\alpha _{1}}\cdots p_{k}^{\alpha _{k}}$, where the $p_i$ are distinct primes.  Then there exists a unique (up to the order of the factors) factorization $x=x_{p_{1}}\cdots x_{p_{k}}$, where  $x_{p_{i}}$ is a $p_i$-element and each $x_{p_i} = x^{k_i}$ for some integer $k_i$.   Note that for any subgroup $A$, the element  $x$ is contained in $A$ if and only if $x_{p_{i}}$ is contained in $A$ for each $i=1, \ldots, k$. Now define $x_{2^{\prime}}=xx_{2}^{-1}(=x_{2}^{-1}x)$ so that $o(x) = o(x_2)o(x_{2'})$. We can now state our second characterization of $R(G)$. 

\begin{corollary}
 \label{Coro_22'R(G)Characterization}Let $G$ be a finite group and let $x\in G $. The element $x=x_2x_{2'}$ is contained in $R(G)$ if and only if both \\
 \emph{(i)} the subgroup $\langle x_{2},y\rangle $ is solvable all $p$-elements $y\in G$ for all odd primes $p$; and \newline
\emph{(ii)} the subgroup $\langle x_{2^{\prime}},y\rangle $ is solvable for all $2$-elements $y\in G$. \newline
In particular, if $x$ has odd order, then $x\in R(G)$ if and only if $\langle x,y\rangle $ is solvable for all $2$-elements $y\in G$.
\end{corollary}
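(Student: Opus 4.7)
The plan is to derive the corollary directly from Theorems \ref{Th_R(G)Characterization<x,y>yoddu} and \ref{Th_R(G)Characterization<x,y>xoddpy2} by splitting it into the two implications.

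For the forward direction, suppose $x \in R(G)$. Since $x_2$ and $x_{2'}$ are both powers of $x$, they lie in $R(G)$ too. For any $y \in G$, the subgroup $R(G)\langle y\rangle$ is solvable, being an extension of the solvable normal subgroup $R(G)$ by the cyclic quotient $\langle y\rangle/(\langle y\rangle\cap R(G))$. Both $\langle x_2,y\rangle$ and $\langle x_{2'},y\rangle$ are contained in $R(G)\langle y\rangle$, so both are solvable, establishing (i) and (ii) for every relevant $y$ (in fact for every $y\in G$).

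For the reverse direction, assume (i) and (ii) hold. Applying Theorem \ref{Th_R(G)Characterization<x,y>yoddu} directly to the element $x_2$ together with hypothesis (i) yields $x_2 \in R(G)$. To handle $x_{2'}$, I would use its $p$-power decomposition: writing $o(x_{2'}) = p_1^{\alpha_1}\cdots p_k^{\alpha_k}$ with all $p_i$ odd, we obtain $x_{2'} = x_{p_1}\cdots x_{p_k}$ with each $x_{p_i}$ a power of $x_{2'}$. Hence for every $2$-element $y$, the subgroup $\langle x_{p_i},y\rangle$ is contained in $\langle x_{2'},y\rangle$, which is solvable by (ii). Each $x_{p_i}$ is an odd $p$-element, so Theorem \ref{Th_R(G)Characterization<x,y>xoddpy2} gives $x_{p_i}\in R(G)$, and therefore $x_{2'}\in R(G)$. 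Combining, $x = x_2 x_{2'}\in R(G)$.

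The ``in particular'' clause is immediate: if $o(x)$ is odd then $x_2=1$, so (i) is vacuous and (ii) reduces to the stated condition. The main obstacle is really only the observation, in the reverse direction, that solvability of $\langle x_{2'},y\rangle$ transfers to the subgroups generated by the primary components $x_{p_i}$; without this reduction one could not invoke Theorem \ref{Th_R(G)Characterization<x,y>xoddpy2}, whose hypothesis requires $x$ to be a $p$-element rather than merely of odd order. Everything else is a routine use of the two preceding theorems and the fact that $R(G)$ is closed under taking powers and products.
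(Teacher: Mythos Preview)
Your proof is correct and follows essentially the same argument as the paper: the forward direction is immediate from properties of $R(G)$, and for the converse you apply Theorem~\ref{Th_R(G)Characterization<x,y>yoddu} to $x_2$ via (i), then pass from $x_{2'}$ to its $p$-primary components $x_{p_i}$ (using $\langle x_{p_i},y\rangle\le\langle x_{2'},y\rangle$) so that Theorem~\ref{Th_R(G)Characterization<x,y>xoddpy2} applies. The only difference is that you spell out the forward direction in slightly more detail than the paper does.
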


\begin{proof}
If $x\in R(G)$, then (i) and (ii) follow from elementary properties of $R(G)$. Conversely, suppose that (i) and (ii) hold. Now (ii) implies that $\langle x_{p_{i}},y\rangle$ is solvable for all $2$-elements $y\in G$ for each odd $p_{i}$ since $ \langle x_{p_i},y \rangle \leq \langle x_{2^{\prime}},y\rangle$. By Theorem \ref{Th_R(G)Characterization<x,y>xoddpy2},  $x_{p_{i}}$ is contained in  $R(G)$ for each odd $p_{i}$ and thus $x_{2^{\prime}}\in R(G)$.  But $x_{2}\in R(G)$ by (i) and Theorem \ref{Th_R(G)Characterization<x,y>yoddu} and so $x=x_{2}x_{2^{\prime}}\in R(G)$.
\end{proof}

The rest of the paper is organized as follows. In Section \ref{Sect_review} we briefly review some of the previous results that motivate our work. In Section \ref{Sect_NotationAndBackgroundResults}, we describe our notation, and we state various background results on which our proofs rely. In Section \ref{Sect_reduction} we reduce the proofs of Theorems \ref{Th_R(G)Characterization<x,y>yoddu} and \ref{Th_R(G)Characterization<x,y>xoddpy2} to proving certain properties of almost simple groups, and in Section \ref{Sect_ProofsForTheASCase} we prove these properties.

 \section{Elementwise conditions for solvability and solvable radical membership\label{Sect_review}}

 Consider conditions on elements $x \in G$ that ensure either the solvability of $G$ or that $x\in R(G)$. We will call such conditions \emph{elementwise conditions}. Since $G$ is solvable if and only if $G=R(G)$, a criterion for $R(G)$ membership always implies a solvability criterion for $G$. However, the sharpest solvability criteria for $G$ do not arise as special cases of $R(G)$ membership criteria.

 Among the elementwise conditions we can distinguish between \\
  (i) \emph{order arithmetic conditions}, that is, arithmetic conditions based on the orders of the elements of $G$ and $R(G)$; and \\
(ii) \emph{$k$-generator subgroup conditions}, that is, conditions that require the solvability of subgroups generated by $k$ elements with specified properties (for some small integer $k\geq 2$). 

 An example of  an order arithmetic condition is Thompson's solvability criterion (\cite[Corollary 3]{Thompson}, see also \cite{Miller,Hall}), which states that $G$ is solvable if and only if there does not exist a triple $(a,b,c)$ of nontrivial elements of $G$, with pairwise coprime orders, such that $abc=1$. Another suggestion for an order arithmetic condition, whose proof has been reduced to an open question about simple groups, can be found in \cite{KapLev08}.

We review some of the main $k$-generator subgroup conditions, since the results in this paper are of this type.

(a) \emph{Solvability criteria for $G$}

 (a1) Thompson \cite[Corollary 2]{Thompson} (1968): $G$ is solvable if and only if $\langle x,y\rangle$ is solvable for all $x,y \in G$. %The next items on our list generalize this result in various directions.

 (a2) Guralnick and Wilson \cite{GW} (2000): $G$ is solvable if and only if the proportion of pairs $(x,y) \in G \times G$ (with $x \ne y$) for which $\langle x,y\rangle $ is solvable is at least $11/30$.

 (a3) The first author \cite{Guest1} (2010) (and (independently) Gordeev, Grunewald, Kunyavskii, and Plotkin in \cite{GGKP} (2010)): $G$ is solvable if and only if $\langle x,x^{y}\rangle $ is solvable for all $x,y\in G$.
%(alternatively, if for every conjugacy class $C$ of $G$, every pair of $C$-elements generates a solvable subgroup).

 (a4) Kaplan and the second author \cite{KL10} (2010) and both authors \cite{GuLev} (2011) (somewhat improving on \cite{KL10}): $G$ is solvable if and only if for all odd primes $p$, except possibly one, all $p$-elements $x\in G$ and all $2$-elements $y\in G$, the subgroup $\langle x,x^{y}\rangle$ is solvable.

 (a5) Dolfi, Guralnick, Herzog and Praeger in \cite{DGHP} (2011): $G$ is solvable if and only if for every pair $(C,D)$ of distinct conjugacy classes consisting of elements of prime power order, there exists $(x,y) \in C \times D$ such that $\langle x,y\rangle $ is solvable.

(b) \emph{$R(G)$ membership criteria}

 (b1) Guralnick, Kunyavskii, Plotkin and Shalev \cite{GKPS} (2006): % Let $G$ be a group and let $x\in G$.  
  an element $x$ is contained in $R(G)$ if and only if $\langle x,y\rangle $ is
solvable for all $y\in G$.

(b2) The first author, Flavell, and Guralnick \cite{FGG} (2010) (and (independently)  Gordeev, Grunewald, Kunyavskii, and Plotkin in \cite{GGKP} (2010)): 
%Let $G$
%be a group and let $x\in G$. 
 an element $x$ is contained in $R(G)$ if and only if $ \langle x,x^{g_1},x^{g_2},x^{g_3} \rangle$ is solvable for all $g_1,g_2,g_3 \in G$.
 
 Another perspective from which one can look at these results is, loosely speaking, the complexity of their proofs. Thompson proved his solvability criterion (a1) as a corollary to his classification of the minimal simple groups, while Flavell showed in \cite{Flav95} (1995) that it can be proved using elementary methods. On the other hand, the proofs of (a5) and the $R(G)$ membership criteria in (b) rely on the full classification of finite simple groups. The results of this paper generalize (b1) and are inspired by (a4) (a weaker version of which is obtained as a special case). Our proofs also rely on the classification of finite simple groups. % and their automorphisms.

\section{Notation and background results\label{Sect_NotationAndBackgroundResults}}

\subsection{Notation} 
\subsubsection*{}
\vspace*{-3mm}

In general, we follow the definitions and notation of \cite{GLS}.
% most of
%which are set in Chapters 1 and 2, concerning finite groups of
%Lie type and their automorphisms. 
 In particular, $\mathbb{F}_{q}$ will denote a finite field with $q$ elements and characteristic $r$.
%where the characteristic $r$
%is a prime and $f$ is a positive integer.
 For a group $G$ of Lie type, $\Sigma$ will denote the associated root system (of the untwisted group) and $\Pi$ the corresponding fundamental root system. %encoded
%in a Dynkin diagram (\cite[Table 1.8]{GLS}).
 The Lie rank of $G$
 % associated with $\Sigma $
 is equal to $| \Pi |$.
 % and the twisted Lie rank
%associated with $\Sigma $ is equal to $| \hat{\Pi}|$.
%(see \cite[Remark 2.3.3. and Proposition 2.3.2(b)]{GLS} ). 
The lowest root relative to $\Pi$ will be denoted by $\alpha _{\ast}$ (and $-\alpha _{\ast}$ denotes the highest root). Let $\widehat{\Pi}$ denote the set of orbits of $\Pi$ under the associated symmetry of the Dynkin diagram, as in \cite[Section 2.3]{GLS}. When $G$ is untwisted, the associated symmetry is trivial and $\Pi = \widehat{\Pi}$.

If $G$ is an almost simple group, then $G_{0}$ will denote its socle (a nonabelian simple group). When $G_0$ is a simple group of Lie type,  $G_{0}^{\ast}=\Inndiag (G_{0})$ denotes the group of inner-diagonal automorphisms of $G_0$. 
%Using the classification of the finite simple groups, $G_{0}$
%can be assumed to be isomorphic to either $A_{n}$ (the alternating group on $
%n$ letters) where $n\geq 5$, a simple group of Lie type, a commutator
%subgroup of $B_{2}(%2)$, $G_{2}(2)$, $^{2}G_{2}(3)$, $
%^{2}F_{4}(2)$ (see \cite{GLS} Definition 2.2.8) or a sporadic group. Since the first three of the four commutator
%groups are isomorphic to simple groups of Lie type (see \cite{GLS} Theorem
%2.2.10) we can take care of them in that form, while the Tits group $
%^{2}F_{4}(2) ^{\prime}$, will be treated separately.

 Many cases in our proofs reduce to checking a small number of relatively small groups $G$. We indicate the usage of \textsc{Magma} \cite{MAGMA} in verifying that $G$ has the desired properties by saying that $G$ belongs to an appropriate computerized verification list (CVL for short), which we define in the text.

\subsection{Background Results}

\subsubsection{Results for the reduction step}

The results quoted here are used to reduce the proofs of Theorems \ref{Th_R(G)Characterization<x,y>yoddu} and 
\ref{Th_R(G)Characterization<x,y>xoddpy2} to questions about almost simple groups.

\begin{theorem}{(\cite[Theorem A]{Guest1})}
 \label{Th_A_G1}Let $G$ be a finite group and let $x \in G$ be an element of prime order $p \ge 5$. Then $x\in R(G)$ if and only if $\langle x,x^{g}\rangle$ is solvable for all $g\in G$.
\end{theorem}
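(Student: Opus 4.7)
The forward direction is immediate: if $x\in R(G)$, then since $R(G)\trianglelefteq G$ we have $x^{g}\in R(G)$ for every $g\in G$, and so $\langle x,x^{g}\rangle \leq R(G)$ is solvable. For the converse, I would argue by induction on $|G|$. Assume that $\langle x,x^{g}\rangle $ is solvable for every $g\in G$ but $x\notin R(G)$, and take $G$ to be a counterexample of minimal order. Passing to the quotient $G/R(G)$, whose solvable radical is trivial and in which the image of $x$ still has order $p$ (or is trivial, in which case $x\in R(G)$ already), minimality lets me assume $R(G)=1$. Then $F^{\ast }(G)=F(G)\cdot E(G)=E(G)$ is a direct product $S_{1}\times \cdots \times S_{n}$ of nonabelian simple groups, and $G$ embeds into $\mathrm{Aut}(E(G))$.

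The next step is a reduction to the almost simple case. The element $x$ permutes the components $S_{i}$. If $x$ moves some $S_{i}$ to a different $S_{j}$, then by picking a suitable conjugate $x^{g}$ that moves the components differently, one can exhibit a subgroup of $\langle x,x^{g}\rangle $ projecting onto a nonsolvable image (built from the factors), contradicting solvability of $\langle x,x^{g}\rangle $. So $x$ must stabilize each $S_{i}$, and by projecting to $\mathrm{Aut}(S_{i})$ for some $i$ on which $x$ acts nontrivially, I reduce to proving: \emph{in any almost simple group $H$ with socle $S$, every nontrivial element $x\in H$ of order $p\geq 5$ admits a conjugate $x^{g}$ such that $\langle x,x^{g}\rangle $ is nonsolvable}. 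Indeed, given such $g$, one can lift $g$ back to $G$ and inherit a nonsolvable $\langle x,x^{g}\rangle $, producing the contradiction.

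The heart of the argument is then a case-by-case verification of the almost simple statement using the classification. For $S=A_{n}$, $x$ is a product of $p$-cycles and one conjugates to produce two $p$-cycles whose supports overlap in exactly one point, generating $A_{p+1}$ or similar. For sporadic $S$, a finite computation (a CVL of the type described in \S\ref{Sect_NotationAndBackgroundResults}) suffices, bearing in mind that outer automorphisms of sporadic groups have order $\leq 2$ so $x$ is inner. For $S$ of Lie type in defining characteristic $p$, $x$ is a unipotent element (since $p\geq 5$ forces $x$ to be inner in most ranks, as diagonal, field and graph automorphisms of order $p\geq 5$ are constrained) and one exploits root-group geometry: a long-root element admits a conjugate generating with it an $\mathrm{SL}_{2}(q)$, for instance. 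For $S$ of Lie type in cross characteristic, $x$ is semisimple and one uses a torus containing $x$ together with a Weyl-group conjugate to generate a subgroup projecting onto $\mathrm{PSL}_{2}$ of a subfield. Throughout, the hypothesis $p\geq 5$ is used to rule out small pathologies (Frobenius/Suzuki groups, characteristic-$2$ and $3$ oddities, small rank exceptions), and to ensure that when $x$ is an outer automorphism of order $p$, the fixed subgroup $C_{S}(x)$ is still large enough to host two noncommuting conjugates.

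The main obstacle I anticipate is the Lie-type analysis, and specifically two subcases: (i) when $x$ is a field automorphism of order $p$ acting on a group $S=S(q^{p})$, where one must show that the fixed subgroup $S(q)$ contains a conjugate of $x$ witnessing nonsolvability --- this requires a careful choice of an overgroup of $x$ of Lie type in which a suitable $x^{g}$ is visible; and (ii) low-rank groups such as $\mathrm{PSL}_{2}(q)$ and $\mathrm{PSL}_{3}(q)$, where the supply of conjugates is thinnest and the argument often devolves to an explicit matrix computation or a small CVL. The remainder of the proof --- the reductions to the almost simple case and the treatment of alternating and sporadic socles --- is largely routine once the Lie-type machinery is in place.
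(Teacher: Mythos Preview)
This statement is not proved in the paper: it is quoted as a background result from \cite[Theorem~A]{Guest1} in Section~\ref{Sect_NotationAndBackgroundResults}, and no proof is supplied here. So there is no ``paper's own proof'' to compare your proposal against.

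That said, your outline follows the same architecture as the original argument in \cite{Guest1}: a standard reduction to $R(G)=1$, then to the almost simple case, followed by a CFSG case analysis. A few remarks on your sketch. Your reduction step when $x$ fails to normalise a component is too vague as written: since $o(x)=p$ is prime, $x$ cycles $p$ components $S_1,\ldots,S_p$, and the usual argument is not to find a conjugate ``moving the components differently'' but rather to take $g$ inside $S_1\times\cdots\times S_p$ so that $x^{-1}x^g$ projects nontrivially onto a single factor, and then iterate --- compare the mechanism in Lemma~\ref{Lem_xCharacterizationNotInN_G(L)} of the present paper. Also, your claim that $p\ge 5$ forces $x$ to be inner-diagonal ``in most ranks'' glosses over genuine work: field automorphisms of arbitrarily large prime order exist whenever the field has the right degree, and these require their own treatment (your point (i) is indeed one of the harder parts of \cite{Guest1}). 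Finally, the almost simple analysis you describe is really the content of Theorem~\ref{ThmA*} (i.e.\ \cite[Theorem~A*]{Guest1}), and the exceptions listed there for $p=3$ are precisely why the hypothesis $p\ge 5$ is needed --- so your intuition about where $p\ge 5$ enters is correct, but the verification is substantially longer than your sketch suggests.
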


\begin{theorem}{(\cite[Theorem A*]{Guest1})}
 \label{ThmA*}Let $G$ be an almost simple group with socle $G_{0}$. Let $x\in G$ have odd prime order $p$. Then one of the following conditions hold. \\ 
 \emph{(1)} There exists $g\in G$ such that $\langle x,x^{g}\rangle $ is not solvable; or \\
\emph{(2)} $p=3$ and either \\
 \indent \emph{(i)} $G_{0} \cong \PSL^{\varepsilon}_d(3)$, $\PSp_d(3)$, $\mathrm{P}\Omega_d^{\varepsilon}(3)$, $E^{\varepsilon}_6(3)$, $E_7(3)$, $E_8(3)$, $F_4(3)$,  or ${^3}D_4(3)$ and $x$ is a long root element in $G_0$, or $G_{0} \cong G_{2}(3)$ and $x$ is a long root or short root element in $G_0$; or  \\
\indent \emph{(ii)} $G_{0}\cong \PSU_{d}(2)$ and $x \in \PGU_d(2)$ has a preimage $x_1 \in \GU_d(2)$ that stabilizes a subspace decomposition $V=V_{1} \perp V_{d-1}$, where $\dim V_i =i$, $V_1$ and $V_{d-1}$ are nondegenerate, $x$ acts trivially on $V_{d-1}$, and $x$ acts on $V_1$ by multiplication by $\lambda \in \mathbb{F}_{4}$ of order $3$.
\end{theorem}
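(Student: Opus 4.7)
My plan is to invoke the classification of finite simple groups and to examine each family of possible socles $G_0$ in turn, in each case either producing $g \in G$ for which $\langle x,x^g \rangle$ is non-solvable, or else showing that the pair $(G_0,x)$ appears in the list (2). The reduction to an almost simple socle is already built into the statement, so from the start I work with $G_0$ one of the simple groups.

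First I would dispatch the alternating and sporadic socles. For $G_0 = A_n$ with $n \geq 5$, one selects a conjugate $x^g$ based on the cycle structure of $x$ so that $\langle x,x^g\rangle$ acts on a subset of size at least $5$ and contains a $3$-cycle (or a double transposition), and therefore contains a copy of $A_5$. For the finitely many sporadic socles, case (1) is verified directly from the \textsc{Atlas} data on conjugacy classes of odd prime order in $\Aut(G_0)$, and one checks that none enter the exceptional list.

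The substantive work lies in the groups of Lie type, which I would split according to whether $p$ equals the defining characteristic $r$ of $G_0$. In the cross characteristic case $p\neq r$, the element $x$ is semisimple, hence lies in a maximal torus $T$; using the Bruhat decomposition and Weyl group conjugation one chooses $g$ so that $\langle x,x^g\rangle$ projects onto a rank one Levi-type subgroup of shape $\SL_2$ or $\PSL_2$ over a suitable extension field, which for $q$ above a small constant forces non-solvability, with the few small $q$ exceptions checked by computer. In the defining characteristic case $p=r$, the element $x$ is unipotent; by the Bala--Carter classification, if $x$ is \emph{not} a long (or short, in $G_2$) root element, one exhibits $g$ so that $x$ and $x^g$ lie in oppositely signed root subgroups of a common $\SL_2$ Levi, and thus generate a non-solvable subgroup.

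The main obstacle, and the source of the exceptional list in (2), is precisely the case in which $x$ is a long root element (or in $G_2(3)$ also a short root element) over $\mathbb{F}_3$. The Chevalley commutator formula restricted to a root subgroup of order three produces only elements of other root subgroups of the same order, so any pair of long root elements of $G_0(3)$ generates a subgroup contained in a rank two subsystem subgroup whose structure over $\mathbb{F}_3$ is solvable; a careful enumeration over the Dynkin diagrams of the groups listed in (i), together with the tabulation of root subsystems spanned by two long root subgroups and their normalizers, confirms this. The analogous phenomenon in characteristic two yields the $\PSU_d(2)$ exception (ii): the described element $x$ of order $3$ acts as multiplication by $\lambda \in \mathbb{F}_4$ on a nondegenerate $1$-space and trivially on its perpendicular complement, so together with any conjugate it lies inside an $\SU_3(2)$ (which is solvable of order $216$) supported on the nondegenerate $2$-space spanned by the two $1$-dimensional supports. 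Verifying in each listed case that \emph{every} conjugate pair $(x,x^g)$ yields a solvable subgroup, and conversely that every element \emph{not} described in (i) or (ii) admits some non-solvable conjugate pair, is the most delicate and lengthy part of the argument.
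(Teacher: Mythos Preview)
The paper does not contain a proof of this statement: it is quoted verbatim from \cite[Theorem~A*]{Guest1} in the background-results section and is used as an input to the reductions in Section~\ref{Sect_reduction}. There is therefore nothing in the present paper to compare your proposal against; the actual proof lives in \cite{Guest1}.

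As a sketch of how the argument in \cite{Guest1} proceeds, your outline is broadly on the right track (CFSG, case split by socle, unipotent versus semisimple, with the long-root and $\PSU_d(2)$ exceptions emerging from the analysis), but two points in your explanation of the exceptional cases are not quite right. First, your justification for (2)(i) --- that any two long root elements over $\mathbb{F}_3$ lie in a rank-two subsystem subgroup which is ``solvable over $\mathbb{F}_3$'' --- is not the correct mechanism: rank-two subsystem groups such as $A_2(3)\cong\PSL_3(3)$ are certainly not solvable. What is actually used is the classification of subgroups generated by two long root subgroups (they are abelian, unipotent, or of type $\SL_2$), and over $\mathbb{F}_3$ the $\SL_2(3)$ possibility is solvable. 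Second, in your account of (2)(ii) you say the pair $x,x^g$ lies in an ``$\SU_3(2)$ supported on the nondegenerate $2$-space spanned by the two $1$-dimensional supports''; an $\SU_3$ does not act on a $2$-space, and the two $1$-spaces need not span a nondegenerate subspace. The correct containment is in the stabiliser of the orthogonal decomposition determined by the span of the two lines, and the solvability comes from the fact that $\GU_2(2)$ (and the relevant parabolic when the span is degenerate) is solvable.
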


%\begin{remark}
% \label{Rem_ThmA*}\emph{Note that in case (2)(i) of Theorem \ref{ThmA*}, we have $x\in G_{0}$, and in case (2)(ii) we have $x \in \PGU_{d}(2) \le \Aut(G_0)$. Also note that in case (2) of Theorem \ref{ThmA*}, it is implicit, by definition of a long root element (\cite[Example 3.2.6]{GLS}), that $G$ is not a Suzuki--Ree group.}
%\end{remark}

\begin{theorem}{(\cite[Theorem 1.3]{Guest2})}
 \label{Th_xorderp>3Andinvolution} Suppose that $G$ is an almost simple group. If $x\in G$ has prime order $p \ge 5$, then there exists an involution $y\in G$ such that $\langle x,y\rangle $ is nonsolvable.
\end{theorem}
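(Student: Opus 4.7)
My plan is to combine Theorem~\ref{ThmA*} with a case analysis across the families of almost simple groups. For $p \geq 5$, the exceptional possibilities (2)(i) and (2)(ii) of Theorem~\ref{ThmA*} cannot occur, so there is some $g \in G$ with $H := \langle x, x^g \rangle$ nonsolvable. The goal is then to refine this conclusion: to show that $G$ contains an involution $y$ with $\langle x, y \rangle$ already nonsolvable. A natural first attempt is to search for a suitable involution inside $H$ itself (which, being nonsolvable, necessarily contains involutions), and more generally to look in subgroups of $G$ that map onto a nonsolvable composition factor of $H$.

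I would organize the argument by the isomorphism type of the socle $G_0$. For $G_0 \cong A_n$, the element $x$ has cycle type containing at least one $p$-cycle, and one constructs the involution $y$ by choosing a product of transpositions so that $\langle x, y \rangle$ has a large orbit on which it acts primitively, invoking the theory of primitive permutation groups to conclude nonsolvability. For $G_0$ a group of Lie type in characteristic $r$, I would separate the semisimple case ($p \ne r$) from the unipotent case ($p = r$) and select $y$ using information about involution centralizers together with the maximal subgroups of $G$ that contain elements of order $p$ (Aschbacher's theorem for the classical groups and the Liebeck--Seitz classification for the exceptional groups). The sporadic groups constitute a finite list that can be verified case by case, and the small rank groups can be handled via an appropriate CVL.

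The main technical obstacle will be the classical groups, where there are several conjugacy classes of involutions (of symplectic, orthogonal, or unitary type, depending on the form) and a variety of centralizer geometries for $x$. The delicate issue is to avoid producing a subgroup $\langle x, y \rangle$ that happens to sit inside a reducible, imprimitive, or field-extension subgroup of $G$ which is solvable, for instance a Frobenius-like overgroup arising when $p$ divides $q - \varepsilon$ and $x$ has a one-dimensional eigenspace. This is expected to be tractable because most finite simple groups are known to be $(2,p)$-generated for odd primes $p \geq 5$, so for each socle and each conjugacy class of $x$ there should be many candidate involutions $y$, and one only needs to exhibit one.
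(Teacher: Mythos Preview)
This statement is not proved in the present paper at all: it is quoted in Section~\ref{Sect_NotationAndBackgroundResults} as a background result, with a citation to \cite[Theorem~1.3]{Guest2}, and is then invoked without proof (for instance in Lemma~\ref{Lem_SingleAut(G)classOf involutions} and in the reduction preceding Theorem~\ref{Th_3plus2}). So there is no ``paper's own proof'' to compare your proposal against.

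As for the proposal itself, it is a reasonable high-level strategy, and the opening observation is correct: since $p\ge 5$, the exceptional cases of Theorem~\ref{ThmA*} do not arise, so some conjugate pair $\langle x,x^g\rangle$ is already nonsolvable. However, what you have written is an outline rather than a proof. The passage from ``$\langle x,x^g\rangle$ is nonsolvable'' to ``some $\langle x,y\rangle$ with $y$ an involution is nonsolvable'' is precisely the content of the theorem, and your sketch does not supply the mechanism that bridges this gap in any of the cases. In particular, the suggestion to look for $y$ inside $H=\langle x,x^g\rangle$ is not obviously sufficient: $H$ certainly contains involutions, but there is no a priori reason that $\langle x,y\rangle$ is nonsolvable for one of them (for example, $x$ might centralize every involution of some small nonsolvable $H$). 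The actual argument in \cite{Guest2} proceeds by a detailed case analysis over the socle types, with explicit choices of $y$ and counting or fixed-point-ratio arguments to rule out containment in solvable overgroups; your paragraph on classical groups correctly identifies where the work lies but does not carry it out.
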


\subsubsection{Results on inner-diagonal automorphisms}

Let $G_0$ be a simple group of Lie type of characteristic $r$ and let $x\in G_0^{\ast}$. Then $x$ is unipotent if it has order a power of $r$ and $x$ is semisimple if it has order prime to $r$. %(see \cite[Proposition 1.2.2(e)]{GLS} for example). 
%We also recall that the concept of a parabolic
%subgroup, which is originally defined for \textbf{*check*} finite groups of Lie type (\cite[Definition 2.6.4]{GLS}), is extended to $G$ where $L\leq G\leq \Aut(L)$, by
%saying that $\widetilde{P}$ is a parabolic in $G$ if $\widetilde{P}=N_{G}(P)$, where $P$ is parabolic in $L$.

\begin{lemma}{(\cite[Lemmas 2.1 and 2.2]{GS})}     %[Lemmas 2.1 and 2.2 of \cite{GS}]
\label{lem:2.2} Let $G_0$ be a simple group of Lie type with $x\in G_0^*$.\\
 \emph{(a)} If $x$ is unipotent, let $P_{1}$ and $P_{2}$ be distinct maximal parabolic subgroups of $G_0$ containing a common Borel subgroup, with unipotent radicals $U_{1}$ and $U_{2}$. Then $x$ is $G_0$-conjugate to an element of $P_{i}\backslash U_{i}$ for $i=1$ or $i=2$. \\
 \emph{(b)} Suppose that $x$ is semisimple and that $x$ is contained in a parabolic subgroup of $G_0^*$. If the rank of $G_0$ is at least 2, then there exists a maximal parabolic $P$ with a Levi complement $J$ such that $x$ is $G_0^*$-conjugate to an element of $J$ not centralized by any Levi component (possibly solvable) of $J$.
\end{lemma}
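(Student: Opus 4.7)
Both parts are consequences of the structure theory of parabolic subgroups together with the Borel--Tits theorem, and my plan is to reduce each assertion to a combinatorial statement about the root system $\Sigma$.

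For part (a), I would first replace $x$ by a $G_0$-conjugate lying in the unipotent radical $U$ of the common Borel $B$; this is possible because $x$ is unipotent and hence lies in some Sylow $r$-subgroup, which is $G_0$-conjugate to $U$. Writing $x$ in Chevalley normal form as a product over positive root subgroups $X_\beta$, let $\Phi(x) \subseteq \Sigma^+$ denote its support. The radical $U_i$ is generated by those $X_\beta$ whose $\beta$ has nonzero coefficient of the simple root $\alpha^{(i)}$ that is removed to define $P_i$. Hence $x \in U_1 \cap U_2$ exactly when every root in $\Phi(x)$ has nonzero coefficient of both $\alpha^{(1)}$ and $\alpha^{(2)}$. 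The task then becomes to produce a conjugate of $x$ whose support contains some root with zero coefficient in either $\alpha^{(1)}$ or $\alpha^{(2)}$. My main tool would be conjugation by Weyl group representatives, which can move the lowest root in the support to any prescribed simple root, followed by a $U$-conjugation to tidy up higher terms; choosing the target to be a simple root different from both $\alpha^{(1)}$ and $\alpha^{(2)}$ (or using the lowest root itself when it is already admissible) yields the required conjugate. For twisted groups I would work with orbit-indexed root subgroups as in \cite[Section 2.3]{GLS}, replacing individual roots by elements of $\widehat{\Pi}$.

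For part (b), I would appeal to the Borel--Tits theorem to place the semisimple element $x$ in a Levi complement of some parabolic subgroup of $G_0^*$. Among all such placements my plan is to select one in which the parabolic $P$ is maximal (equivalently, whose unipotent radical is largest) with Levi $J$, and then argue that this choice automatically satisfies the ``no Levi component centralizes $x$'' condition: if some simple Levi component $M$ of $J$ centralized $x$, one could merge $M$ into an adjacent unipotent-radical direction to obtain a strictly larger parabolic still containing $x$ in its Levi, contradicting maximality. The rank $\ge 2$ hypothesis ensures that a proper maximal parabolic exists with nontrivial Levi, so this enlargement procedure has room to operate.

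The main obstacle I foresee in (a) is the small-rank case, where every simple root may be forced into $\{\alpha^{(1)}, \alpha^{(2)}\}$ and the Weyl-group argument needs a more delicate choice of lowest root in $\Phi(x)$; here one would check each exceptional configuration directly. In (b), the subtle point is making the enlargement argument precise for twisted groups, where Levi components are themselves twisted and the Dynkin-diagram orbits complicate the combinatorics; this is where one would lean on the tables of maximal parabolics and their Levi structures in \cite{GLS}.
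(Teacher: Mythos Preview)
The paper does not prove this lemma at all: it is quoted verbatim from Guralnick--Saxl \cite[Lemmas 2.1 and 2.2]{GS}, and the paper simply cites it as a background result. So there is no proof in the paper to compare your proposal against.

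That said, your sketch for part (b) contains a genuine confusion worth flagging. You write that you will select $P$ ``maximal (equivalently, whose unipotent radical is largest)'', but this is backwards: maximal parabolics have the \emph{smallest} unipotent radicals among proper parabolics, while the Borel has the largest. More importantly, your enlargement argument cannot work as stated: if $P$ is already a maximal parabolic and a Levi component $M$ of $J$ centralizes $x$, there is no strictly larger proper parabolic to pass to, so no contradiction arises. The actual argument in \cite{GS} first chooses a parabolic with $x$ in its Levi and with unipotent radical as large as possible (equivalently, a \emph{minimal} such parabolic), obtains the non-centralizing property there, and then passes \emph{up} to a maximal parabolic containing it while checking that the property survives. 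Your outline for part (a) is closer to the mark, though the ``tidy up higher terms'' step after Weyl conjugation is doing real work and would need to be made precise, particularly in rank $2$ where (as you note) there is no third simple root to aim for.
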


When $G$ is untwisted, note that Lemma \ref{lem:2.2}(a) can be applied  if there are at least two distinct nodes of the Dynkin diagram. These nodes define a pair of distinct maximal parabolic subgroups. Similarly, when $G$ is twisted, Lemma \ref{lem:2.2}(a) can be applied when there are at least two orbits of nodes under the associated symmetry of the Dynkin diagram. The next lemma is useful for applying Lemma \ref{lem:2.2}(b).

\begin{lemma}\label{Lem_semisimpleInParabolic}
 Let $K$ be a simple group of Lie type of characteristic $r$, and let $x\in K^{\ast}$ have order coprime to $r$. If $O^{r^{\prime}}( C_{K^{\ast}}(x) )$ is nontrivial, then $x$ is contained in a parabolic subgroup of $K^{\ast}$.
\end{lemma}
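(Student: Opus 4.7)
The plan is to invoke the Borel--Tits theorem on normalizers of unipotent subgroups. The hypothesis $O^{r'}(C_{K^{\ast}}(x)) \neq 1$, combined with the fact that $O^{r'}$ of any finite group is generated by its $r$-elements, supplies a nontrivial $r$-element $u \in C_{K^{\ast}}(x)$. Since $K$ is of Lie type in characteristic $r$, every $r$-element of $K^{\ast}$ is unipotent, so $u$ is a nontrivial unipotent element commuting with $x$. The whole game is then to use the existence of such a $u$ to lodge $x$ inside a proper parabolic.

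Concretely, I would realize $K^{\ast}$ as $\mathbf{G}^{F}$ for the simple adjoint algebraic group $\mathbf{G}$ of the appropriate type over $\overline{\mathbb{F}_{r}}$ with Steinberg endomorphism $F$, so that the proper parabolic subgroups of $K^{\ast}$ (in the finite-groups-of-Lie-type sense) are exactly the subgroups $\mathbf{P}^{F}$ for $\mathbf{P}$ a proper $F$-stable parabolic of $\mathbf{G}$. Applying the Borel--Tits theorem to the closed, $F$-stable unipotent subgroup $\langle u \rangle \leq \mathbf{G}$ then produces a proper parabolic $\mathbf{P} \leq \mathbf{G}$ with $\langle u \rangle \subseteq R_{u}(\mathbf{P})$ and $N_{\mathbf{G}}(\langle u \rangle) \subseteq \mathbf{P}$. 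Since $x$ commutes with $u$, $x \in C_{\mathbf{G}}(u) \subseteq N_{\mathbf{G}}(\langle u \rangle) \subseteq \mathbf{P}$, and because $x \in K^{\ast} = \mathbf{G}^{F}$, this yields $x \in \mathbf{P}^{F}$ once $\mathbf{P}$ is chosen to be $F$-stable.

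The main point requiring care is precisely the $F$-stability of the Borel--Tits parabolic. I expect this to follow because the collection of parabolics satisfying the conclusion of Borel--Tits for the $F$-stable subgroup $\langle u \rangle$ is itself an $F$-stable, nonempty subset of the projective variety of parabolics of $\mathbf{G}$; a standard Lang--Steinberg argument (or, in good characteristic, use of the canonical parabolic attached to $u$ via its associated cocharacter) extracts an $F$-stable member. This technicality aside, the argument is short and uniform: no case analysis on the Lie type of $K$ or on the characteristic $r$ is needed.
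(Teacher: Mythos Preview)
Your approach is correct and essentially the same as the paper's: extract a nontrivial $r$-element $u$ from $O^{r'}(C_{K^{\ast}}(x))$, then use Borel--Tits to place $x \in N_{K^{\ast}}(\langle u\rangle)$ inside a proper parabolic. The paper is slightly more direct in that it invokes the finite-group form of Borel--Tits (as stated in \cite[Theorem~3.1.3]{GLS}), which already asserts that $N_{K^{\ast}}(\langle u\rangle)$ lies in a parabolic subgroup of $K^{\ast}$, so your detour through the algebraic group and the attendant $F$-stability worry are unnecessary.
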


\begin{proof}
 Assume for a contradiction that $x$ is not contained in any parabolic subgroup of $K^{\ast}$. Let $g\in C_{K^{\ast}}(x)$ be a nontrivial $r$-element. Then, by the Borel--Tits theorem (see \cite[Theorem 3.1.3]{GLS} for example) $ N_{K^{\ast}}(\langle g\rangle )$ is contained in a parabolic subgroup of $K^{\ast}$ and since $x\in N_{K^{\ast}}(\langle g\rangle )$ we have a contradiction.
\end{proof}

\subsubsection{Results on field, graph and graph-field automorphisms}

For $A \in \GL_n(q^{k})$, define the matrix $A^{\varphi _{q}}$ by $(A^{\varphi _{q}}) _{ij}=A_{ij}^{q}$. 

\begin{lemma}{(\cite[Section 7.2]{GL})}
\label{Lem_FieldAutCoset}Let $K$ be a simple group of Lie type and 
%\in Chev$ ( i.e., $K$ is quasisimple and $
%K/Z(K)$ is a simple group of Lie type). 
 suppose that $\phi \in \Aut ( K)$ is either a field or a graph-field automorphism. Suppose that $\phi' \in \Aut ( K )$ has the same order as $\phi$ and that $\phi K^{\ast} = \phi' K^{\ast}$. Then there exists $y\in K^{\ast}$ such that $\phi ^{\prime}=\phi ^{y}$. In particular, $\phi ^{\prime}$ is a field or a graph-field automorphism.
\end{lemma}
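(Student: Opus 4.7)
The plan is to reduce the statement to a Lang--Steinberg fixed point argument on the associated algebraic group, which is the substance of the analysis in \cite[Section 7.2]{GL}. Let $\bar{K}$ denote the simple algebraic group over $\bar{\mathbb{F}}_r$ of the same Lie type as $K$, and let $\sigma$ be a Steinberg endomorphism such that $K^*=\Inndiag(K)$ is the fixed-point subgroup of $\bar{K}$ (or a derivative thereof) under $\sigma$. Lift $\phi$ to an endomorphism $\hat{\phi}$ of $\bar{K}$ that commutes with $\sigma$ and that acts on the root system and on $\bar{\mathbb{F}}_r$ in the way prescribed by the definition of a field or graph-field automorphism. Because $\phi$ is of this restricted type, $\hat{\phi}$ is itself a Steinberg endomorphism (the graph-field case being handled by combining the diagram symmetry with a Frobenius of matching order).

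Next I would translate the statement into an equation in $\bar{K}$. Write $\phi'=\phi z$ with $z\in K^*$, and observe that the desired conclusion $\phi'=y^{-1}\phi y$ is equivalent to
\[
z = y^{-\phi}\, y,
\]
so that one must solve this twisted conjugacy equation with $y\in K^*$. Applying the Lang--Steinberg theorem to $\bar{K}$ with the Steinberg endomorphism $\hat{\phi}$ produces some $\bar{y}\in\bar{K}$ satisfying $z=\bar{y}^{-\hat{\phi}}\bar{y}$. The remaining task is to arrange for $\bar{y}$ to lie in the $\sigma$-fixed points, i.e.\ to be realizable inside $K^*$; this is where the hypothesis $o(\phi')=o(\phi)$ is used. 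The equality of orders guarantees that $\hat{\phi}$ and the lift $\hat{\phi}'$ of $\phi'$ define Steinberg endomorphisms of the same order, so the pair $(\hat{\phi},z)$ really does live in the setting where Lang--Steinberg applied within the $\sigma$-fixed algebraic subgroup yields a fixed-point $y$; without this order condition, an element of $\phi K^*$ could instead be a field automorphism of a different order or a composition with a semisimple element of noncentral order, for which no such $y$ exists.

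The main obstacle is bookkeeping in the twisted and graph-field cases, where $\hat{\phi}$ does not simply commute with $\sigma$ in an obvious way and one must be careful about the interaction of the Dynkin diagram symmetry, the Frobenius, and the full group $K^*$ of inner-diagonal automorphisms (as opposed to $K$ itself). The argument in \cite[Section 7.2]{GL} handles this by choosing $\hat{\phi}$ so that $\hat{\phi}$ and $\sigma$ are both Steinberg endomorphisms of the same ambient algebraic group commuting up to an element of the center, and then running the fixed point argument carefully on the semisimple simply connected cover before descending. The ``in particular'' clause is then automatic: conjugation by $y\in K^*$ preserves the type of automorphism, so $\phi'=\phi^{y}$ is again a field or graph-field automorphism.
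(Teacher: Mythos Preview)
The paper does not supply its own proof of this lemma; it is quoted directly from \cite[Section~7.2]{GL} (the same statement appears as \cite[Proposition~4.9.1(d)]{GLS}), so there is nothing in the present paper to compare your argument against.

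That said, your outline is headed in the right direction and is indeed the shape of the argument in the cited sources: one lifts $\phi$ to a Steinberg endomorphism $\hat\phi$ of the ambient algebraic group $\bar K$ and invokes Lang--Steinberg. Two points in your sketch remain imprecise. First, Lang--Steinberg applied to $\hat\phi$ on $\bar K$ only yields $\bar y\in\bar K$ solving $z=\bar y^{-\hat\phi}\bar y$; to force $\bar y$ into $K^*=\bar K^{\sigma}$ one does not simply ``apply Lang--Steinberg within the $\sigma$-fixed algebraic subgroup'' (that subgroup is finite, so Lang--Steinberg is unavailable there). The actual mechanism in \cite{GL,GLS} is a comparison of the size of the $K^*$-orbit of $\phi$ with the number of elements of order $o(\phi)$ in the coset $\phi K^*$, computed via the known structure of $C_{K^*}(\phi)\cong\Inndiag(O^{r'}(C_K(\phi)))$; the order hypothesis enters exactly here, to single out the correct orbit. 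Second, your remark that without the order hypothesis $\phi'$ ``could instead be a field automorphism of a different order'' is off target: $\phi'$ lies in $\phi K^*$, so its image in $\mathrm{Out}(K)$ is fixed; what can change is that $\phi'$ might be, say, the product of $\phi$ with a nontrivial inner-diagonal element of order coprime to $o(\phi)$, hence of strictly larger order, and such elements form genuinely different $K^*$-classes.
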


 The conjugacy of graph automorphisms is more complicated. The next lemma combines results from \cite[Lemma 3.7]{moufang} and \cite[Lemma 3.9]{monodromy}, which describe representatives of the $K^{\ast}$-classes of graph involutions when $K=\PSL_{n}^{\varepsilon}(q)$. 
%We have a single $K^{\ast}$-coset
%whose representative is the nontrivial order $2$ graph automorphism (unique up to $\Aut (K)$-conjugacy). 
Here $\iota$ denotes the inverse-transpose automorphism of $\PSL_n(q)$.

\begin{lemma}
 \label{Lem_involutiveGraphAutAnepsilon}Let $K= \PSL_{n}^{\varepsilon}(q)$, where $n\geq 3$, and let $v$ be the number of $K^{\ast}$-classes of graph involutions of $K$ (that is, classes of involutions  in $K^{\ast}\Gamma _{K}-K^{\ast}$ if $\varepsilon =1$ and classes of involutions in $K^{\ast}\Phi _{K}-K^{\ast}$ if $\varepsilon =-1$).

 \noindent\emph{(i)} If $n$ is odd, then $v=1$. A representative of the single $K^{\ast}$-class is given by $\iota $ if $\varepsilon =1$, and by $\varphi _{q}$ if $\varepsilon =-1$.

  \noindent \emph{(ii)} If $n$ is even and $q$ is even, then $v=2$. Representatives of the two $ K^{\ast}$-classes are given by $\iota $ and $\iota St$ if $\varepsilon =1$ and by $\varphi _{q}$ and $\varphi _{q}x_{0}(1)$ if $\varepsilon =-1$. The $n\times n$ matrices $S$, $t$ and $x_{0}(1)$ are given by
\begin{equation*}
S=\diag\left[ \left( 
\begin{array}{cc}
0 & -1 \\ 
1 & 0
\end{array}
\right),\ldots,\left( 
\begin{array}{cc}
0 & -1 \\ 
1 & 0
\end{array}
\right) \right], \quad t= \diag\left[ \left( 
\begin{array}{cc}
1 & 1 \\ 
0 & 1
\end{array}
\right),1,\ldots ,1\right],
\end{equation*}
and $x_{0}(1) =I_{n}+E^{(1,n)}$ where $(E^{(1,n)}) _{ij}=\delta _{i1}\delta _{jn}$.

 \noindent \emph{(iii)} If $n$ is even and $q$ is odd, then $v=3$. The centralizers in $K$ of the representatives of the three $K^{\ast}$-classes are subgroups of type $\Sp_{n}(q)$, $O_{n}^{+}(q)$ and $ O_{n}^{-}(q)$ (for both values of $\varepsilon $). If $\varepsilon =1$, then the representatives of the three $K^{\ast}$-classes are given by $\iota S$, $\iota S^{+}$, and $\iota S^{-}$, where
\begin{equation*}
S^{+}=\diag\left[ \left( 
\begin{array}{cc}
0 & 1 \\ 
1 & 0
\end{array}
\right),\ldots,\left( 
\begin{array}{cc}
0 & 1 \\ 
1 & 0
\end{array}
\right) \right], \quad S^{-}=\diag\left[ \left( 
\begin{array}{cc}
0 & 1 \\ 
1 & 0
\end{array}
\right),\ldots,\left( 
\begin{array}{cc}
0 & 1 \\ 
1 & 0
\end{array}
\right),\mu,1\right]
\end{equation*}
 are $n\times n$ matrices  and $-\mu/2 $ is a non-square. For $\varepsilon =-1$, it is shown in \cite[Section 3.11.4]{moufang}, that there exist three choices of  Hermitian form, $f_{1}$, $f_{2}$, $ f_{3}$, such that for  $\GU_{n}(q,f_{i}) =C_{\mathrm{GL}_{n}(q^{2})}(f_{i}\varphi  _{q}\iota ) =X$ and $\PSU_{n}(q,f_{i}) = X^{\prime}/Z(X^{\prime})$, the actions of $\varphi _{q}$  on  $\PSU_{n}(q,f_{i})$ represent all three $K^{\ast}$-classes.
\end{lemma}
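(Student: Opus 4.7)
\emph{Proof proposal.} My plan is to translate the classification of graph involutions of $K$ into the classical classification of non-degenerate bilinear (or sesquilinear) forms on the natural module $V$, and then read off the number of classes and representatives from standard facts about classical groups over $\mathbb{F}_{q}$.

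For the linear case $\varepsilon=1$, I would write an arbitrary involution in $K^{\ast}\Gamma_{K}-K^{\ast}$ in the form $\iota y$ with $y\in K^{\ast}$. Lifting $y$ to $\tilde y\in \GL_{n}(q)$, the relation $(\iota\tilde y)^{2}\in Z(\GL_{n}(q))$ forces $\tilde y^{T}\tilde y$ to be a scalar; after rescaling $\tilde y$ one obtains $\tilde y^{T}=\pm\tilde y$, so $\tilde y$ is the Gram matrix of a non-degenerate symmetric or skew-symmetric form $B_{\tilde y}$ on $V$. The conjugation action of $z\in K^{\ast}$ on $\iota y$ translates (modulo scalars) to $\tilde y\mapsto (z^{T})^{-1}\tilde y\, z^{-1}$, which is exactly form equivalence. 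Thus the $K^{\ast}$-classes of graph involutions correspond to equivalence classes of such forms modulo scaling. Invoking the standard classification of forms over $\mathbb{F}_{q}$: there is a single symplectic class (requiring $n$ even, centralizer $\Sp_{n}(q)$); non-degenerate symmetric forms yield one class when $n$ is odd and two classes of types $+$ and $-$ when $n$ is even and $q$ is odd; when $q$ is even the symmetric and skew distinctions collapse and a finer analysis via quadratic forms is required. Combining these yields the claimed counts $v=1,2,3$ in the three cases, and the explicit matrices $S$, $S^{\pm}$, $t$, $x_{0}(1)$ emerge as convenient Gram matrices of the corresponding forms.

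For the unitary case $\varepsilon=-1$, the same strategy applies with $\varphi_{q}$ replacing $\iota$ and $\GU_{n}(q)\le \GL_{n}(q^{2})$ replacing $\GL_{n}(q)$. An involution $\varphi_{q}y$ corresponds, after lifting and rescaling, to a matrix whose associated sesquilinear form is a non-degenerate Hermitian form, and the $K^{\ast}$-orbits correspond to equivalence classes of such Hermitian forms. The three representatives in part (iii) would be verified by fixing three inequivalent Hermitian forms $f_{1},f_{2},f_{3}$ whose isometry groups $\GU_{n}(q,f_{i})$ have the required types, and then showing that the $\varphi_{q}$-action on each $\PSU_{n}(q,f_{i})$ realizes a distinct $K^{\ast}$-class with centralizer of the stated type.

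The main obstacle I foresee is tracking the passage from $\GL_{n}(q)$ to $K^{\ast}$: the scaling freedom in choosing $\tilde y$ can either merge or split form-equivalence classes depending on whether certain scalars are squares in $\mathbb{F}_{q}$, and this is precisely the source of the dependence on the parities of $n$ and $q$. To manage this cleanly I would first enumerate involutions in the coset $\iota\GL_{n}(q)$ (respectively $\varphi_{q}\GU_{n}(q)$), count the $\GL_{n}(q)$-orbits under the twisted conjugation action above, and then determine which orbits fuse once one passes to $K^{\ast}$. Computing the fixed-point subgroup of each representative, which is the centralizer in $K$ and is read directly from the form, both separates the classes and verifies the assertion about centralizer types.
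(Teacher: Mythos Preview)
The paper does not prove this lemma; it is quoted from \cite[Lemma~3.7]{moufang} and \cite[Lemma~3.9]{monodromy}, so there is no in-paper argument to compare against. Your strategy for $\varepsilon=1$ is the standard one used in those references: writing a graph involution as $\iota\tilde y$ forces $\tilde y^{T}=\pm\tilde y$, twisted conjugation becomes congruence of forms, and the Witt classification of non-degenerate symmetric/alternating forms over $\mathbb{F}_{q}$ (together with the scalar freedom) yields the case split on the parities of $n$ and $q$. That part of your plan is sound, and the only work you have swept under the rug is the characteristic-$2$ analysis, where ``symmetric'' and ``alternating'' coalesce and the two classes are separated not by form type but by the rank of the $\mathbb{F}_{q}$-linear map $v\mapsto B(v,v)^{1/2}$ (equivalently, by whether $B$ admits a compatible quadratic form of $+$ type); this is where the representative $\iota St$ comes from.

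Your treatment of $\varepsilon=-1$, however, contains a genuine error. Lifting $\varphi_{q}y$ with $\tilde y\in\GU_{n}(q)$ and squaring gives $\tilde y^{(q)}\tilde y\in Z$; using the unitary relation $\tilde y^{(q)}=J\tilde y^{-T}J^{-1}$ (for the Hermitian Gram matrix $J$) this again forces $\tilde y^{T}=\pm\tilde y$ up to scalars. So the relevant forms are \emph{bilinear} (symmetric or alternating) over $\mathbb{F}_{q^{2}}$, with equivalence under $\GU_{n}(q)$-congruence, not Hermitian forms. Your claim that the $K^{\ast}$-orbits correspond to equivalence classes of Hermitian forms cannot be right: over a finite field all non-degenerate Hermitian forms are equivalent, which would force $v=1$ in every case. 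The three Hermitian forms $f_{1},f_{2},f_{3}$ in part~(iii) are not inequivalent as Hermitian forms; rather, they are three Gram matrices chosen so that the \emph{fixed} map $\varphi_{q}$ (entrywise Frobenius) interacts differently with each matrix model of $\PSU_{n}(q,f_{i})$, thereby landing in three distinct $K^{\ast}$-classes whose centralizers have types $\Sp_{n}$, $O_{n}^{+}$, $O_{n}^{-}$. To repair your argument, carry the bilinear-form analysis through inside $\GU_{n}(q)$ and classify symmetric/alternating $\tilde y\in\GU_{n}(q)$ up to $\GU_{n}(q)$-congruence modulo scalars; this reproduces the same trichotomy as in the linear case.
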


\begin{remark} 
 \emph{In  Lemma \ref{Lem_involutiveGraphAutAnepsilon}(ii), if $n=2m$, $\varepsilon =-1$ and $\{  e_{1},\ldots,e_{m},f_{m},\ldots,f_{1}\}$ is an ordered basis of $V$, then the Hermitian form is  defined by $\langle e_{i},e_{j}\rangle =\langle f_{i},f_{j}\rangle =0$ and $\langle e_{i},f_{j}\rangle=\delta _{ij}$.} 
\end{remark}

\begin{lemma}{(\cite[Lemma 3.6]{moufang})}
\label{Lem_involutiveGraphAutE_6}Let $K=E_{6}^{\varepsilon}(q)$ be  simple.

 \noindent \emph{(i)} If $\varepsilon =1$, then there are precisely two $K^{\ast}$-classes of graph  involutions, 
%in $K^{\ast}\Gamma _{K}-\Gamma _{K}$, 
 with representatives $\gamma $, $\gamma x_{-\alpha _{\ast}}(1)$ if $q$ is even and $\gamma $, $\gamma h_{-\alpha _{\ast}}(-1)$ if $q$ is odd, where $\gamma $ is the standard graph automorphism of $E_{6}$.

\noindent \emph{(ii)} If $\varepsilon =-1$, then there are precisely two $K^{\ast}$-classes of graph involutions 
%of $K^{\ast}\Phi _{K}-\Phi _{K}$,
  with representatives $\varphi _{q}$, $\varphi _{q}x_{-\alpha _{\ast}}(1)$ if $q$ is  even and $\varphi _{q}$, $\varphi _{q}h_{-\alpha _{\ast}}(-1)$ if $q$ is odd.
\end{lemma}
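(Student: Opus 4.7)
The plan is to work in the algebraic group setting and apply the Lang--Steinberg theorem to reduce the enumeration of graph involutions to a count of involutions in a group of type $F_4$. Let $\bar{G}$ denote the simple algebraic group of type $E_6$ over $\overline{\mathbb{F}}_r$, and let $F\colon \bar{G}\to \bar{G}$ be the Steinberg endomorphism with $\bar{G}^F = K^*$. For $\varepsilon = 1$, let $\gamma$ be the algebraic graph involution commuting with $F$; for $\varepsilon = -1$, let $\varphi_q$ play the analogous role in the $^2E_6(q)$ setting. In either case, the connected centralizer $C_{\bar{G}}(\gamma)^\circ$ is a simple algebraic group of type $F_4$, whose $F$-fixed points form a copy of $F_4(q)$ inside $K$.

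First I would count the $K^*$-classes of involutions in the coset $\gamma K^*$. By a standard twisted Lang--Steinberg argument, these classes are in bijection with $F$-stable $C_{\bar G}(\gamma)$-conjugacy classes of involutions (together with the identity, which corresponds to $\gamma$ itself) in $C_{\bar G}(\gamma)^\circ$. The algebraic group of type $F_4$ has precisely two nontrivial classes of involutions, distinguished by long-root versus short-root type; tracking which of these remain $F$-stable and then passing back to $K^*$-conjugacy yields exactly two $K^*$-classes in $\gamma K^*$, as claimed. The same argument applies verbatim with $\varphi_q$ in place of $\gamma$ for the $\varepsilon=-1$ case.

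Second I would verify that the listed representatives are involutions and that they lie in distinct $K^*$-classes. Since the node of the extended $E_6$ Dynkin diagram corresponding to $-\alpha_*$ is fixed by the diagram symmetry, $\gamma$ stabilizes the root group $U_{-\alpha_*}$ pointwise up to the identity action on the parameter, and in particular fixes $-\alpha_*$. A direct Chevalley-commutator calculation then gives
\begin{equation*}
\bigl(\gamma x_{-\alpha_*}(1)\bigr)^2 = x_{-\alpha_*}(2), \qquad \bigl(\gamma h_{-\alpha_*}(-1)\bigr)^2 = h_{-\alpha_*}(1) = 1,
\end{equation*}
so the first element is an involution precisely when $q$ is even, and the second precisely when $q$ is odd. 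The element $\gamma$ has centralizer $F_4(q)$ in $K$, while a rank/root computation shows that the second representative has centralizer of strictly different type (in particular, containing or related to an $\Sp_8(q)$-type subgroup in the odd case), so no $K^*$-conjugacy between them is possible and the two classes are exhausted.

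The main obstacle is executing the Lang--Steinberg reduction cleanly enough to obtain exactly \emph{two} (not one and not three) $K^*$-classes, which requires keeping track of which geometric $F_4$-classes of involutions are $F$-stable and how they fuse, and then matching the abstract class data with the explicit root and torus elements $x_{-\alpha_*}(1)$ and $h_{-\alpha_*}(-1)$ coming from the ambient $E_6$ root datum. The detailed bookkeeping is carried out in the proof of \cite[Lemma 3.6]{moufang}, from which the statement is quoted.
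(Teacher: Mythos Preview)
The paper does not prove this lemma; it simply quotes it from \cite[Lemma 3.6]{moufang}. So there is no proof in the paper to compare your sketch against.

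That said, your Lang--Steinberg reduction contains a genuine error. You claim that the $K^{\ast}$-classes of involutions in $\gamma K^{\ast}$ biject with the set $\{1\}\cup\{\text{involution classes in }C_{\bar G}(\gamma)^{\circ}\}$. This is not the correct parametrisation. Test it on $\bar G=\PGL_n$ with $n$ odd and $\gamma=\iota$: your bijection would predict $1+\lfloor n/2\rfloor$ classes (one for each involution class in $\SO_n$, plus the identity), whereas Lemma~\ref{Lem_involutiveGraphAutAnepsilon}(i) says there is exactly one. The same overcounting happens for $E_6$: the algebraic $F_4$ has two involution classes, both $F$-stable under a standard Frobenius, so your recipe gives three $K^{\ast}$-classes, not two. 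Your appeal to ``tracking which of these remain $F$-stable'' does not rescue the count, because both classes are $F$-stable.

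The correct Lang--Steinberg argument runs differently. One first classifies the $\bar G$-orbits of involutions in the coset $\gamma\bar G$ over the algebraic closure (equivalently, the $\bar G$-classes of involutive outer automorphisms in the coset of $\gamma$). For $E_6$ in odd characteristic there are exactly two such geometric classes, with connected centralisers of type $F_4$ and $C_4$ respectively; in characteristic $2$ Aschbacher--Seitz \cite[(19.7)--(19.9)]{AS} supply the analogous two classes. Since each centraliser is connected, Lang--Steinberg then shows that each geometric class contributes a single $K^{\ast}$-class, giving two in total. Your verification that the listed elements are involutions and your remark that the second representative has a $C_4$-type centraliser are fine and fit naturally into this corrected framework.
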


\subsubsection{Other background results}

\begin{remark}
 \label{Rem_PrimitivePrimeDivisor}\emph{For a prime power $q$, we recall that $u$ is a primitive prime  divisor  of $q^{e}-1$ if $u$ divides $q^{e}-1$ but does not divide $q^{i}-1$  for $i=1,\ldots, e-1$. A primitive prime divisor of $q^{e}-1$ exists if $e\geq 3$ and $(q,e) \neq (2,6)$ (see \cite[Theorem 5.2.14]{KL} for example). Moreover, since $q^{e} \equiv 1 \pmod{u}$, we have $u -1= ke$ for some $k \ge 1$ by Fermat's little theorem.   In particular $ u\geq e+1$.}
%Assuming that $q$ is odd and $e$ is an odd positive multiple of $3$,
%it follows from \cite[Lemma 2.1(b),(c)]{GuralnickMalle} that one can choose $u = ke+1$
%with $k \ge 3$. \textbf{Check - is $u$ still prime here? NO}}
\end{remark}

\begin{lemma}
 \label{Lem_CentralizerOfFieldAutOfPSL_2(q)} Let $K=\PSL_{2}(q)$ and suppose that $q=q_{0}^{k}$ %where $q_{0}$ is a prime power and
 where $k$ is prime. Let $x$ be the automorphism of $\PSL_{2}(q)$  induced by the field automorphism $\varphi _{q_{0}}$ of $\GL_{2}(q)$. 

 \noindent \emph{(i)} If $k=2$ and $q$ is odd, then $C_{K}(x) \cong \PGL_{2}(q_{0})$.

 \noindent \emph{(ii)} Otherwise (that is, if $k$ is odd or if $q$ is even) we have $C_{K}(x) \cong \PSL_{2}(q_{0})$.
\end{lemma}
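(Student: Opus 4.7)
The natural approach is to lift to $\SL_2(q)$ and determine which cosets modulo the center $Z = Z(\SL_2(q))$ are fixed by $\varphi = \varphi_{q_0}$ (acting entry-wise on $\SL_2(q)$). A coset $gZ$ of $K$ is fixed iff $g^{\varphi} = \epsilon g$ for some $\epsilon \in Z$, so $C_K(x)$ is the image in $K$ of the subgroup $\Pi = \Pi^+ \cup \Pi^-$ of $\SL_2(q)$, where $\Pi^{\pm} = \{g \in \SL_2(q) : g^{\varphi} = \pm g\}$. Note that $\Pi$ is closed under multiplication by a sign computation, and that $\Pi^+ = \SL_2(q_0)$.

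Case (ii) would follow quickly. When $q$ is even, $Z$ is trivial and $\Pi^- = \Pi^+ = \SL_2(q_0) = \PSL_2(q_0)$. When $k$ is odd and $q$ is odd, applying $\varphi$ exactly $k$ times to any $g \in \Pi^-$ gives $g = (-1)^k g = -g$, forcing $g = 0$; hence $\Pi^- = \emptyset$ and $C_K(x) = \SL_2(q_0)/\{\pm I\} = \PSL_2(q_0)$.

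For case (i), with $q = q_0^2$ odd, a cyclic-group count in $\mathbb{F}_q^{\ast}$ (using that $2(q_0-1) \mid q - 1$ when $q_0$ is odd) produces exactly $q_0 - 1$ nonzero solutions of $\lambda^{q_0} = -\lambda$, and any such $\lambda$ lies in $\mathbb{F}_q \setminus \mathbb{F}_{q_0}$. Fixing one such $\lambda$, every element of $\Pi^-$ factors uniquely as $g = \lambda h$ with $h \in \GL_2(q_0)$ (and $\det h = \lambda^{-2}$), and conversely any such $h$ gives an element of $\Pi^-$; in particular $\Pi^-$ is nonempty. I would then define $\phi : \Pi \to \PGL_2(q_0)$ by $\phi(g) = gZ_0$ on $\Pi^+$ and $\phi(\lambda h) = h Z_0$ on $\Pi^-$, where $Z_0 = \mathbb{F}_{q_0}^{\ast} I$, check that $\phi$ is independent of the choice of $\lambda$ and is a homomorphism (using $\lambda^2 \in \mathbb{F}_{q_0}^{\ast}$ to handle products in $\Pi^- \cdot \Pi^-$), and then conclude by an order count that $\phi$ descends to an isomorphism $\Pi/Z \cong \PGL_2(q_0)$.

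The main obstacle is the kernel calculation for $\phi$ in case (i). The $\Pi^+$-contribution is straightforward: $cI \in \Pi^+$ with $c \in \mathbb{F}_{q_0}^{\ast}$ and $c^2 = 1$ gives $c = \pm 1$, contributing $\{\pm I\} = Z$. A hypothetical $\Pi^-$-contribution would be $\lambda h = c\lambda I$ with $c \in \mathbb{F}_{q_0}^{\ast}$ and $(c\lambda)^2 = 1$, which forces $c\lambda = \pm 1 \in \mathbb{F}_{q_0}$ and contradicts $\lambda \notin \mathbb{F}_{q_0}$; hence $\ker \phi = Z$. Since $|\Pi| = 2|\SL_2(q_0)|$ and $|\PGL_2(q_0)| = |\SL_2(q_0)|$, the induced map $\Pi/Z \to \PGL_2(q_0)$ is an isomorphism, completing the argument.
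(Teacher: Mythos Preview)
Your argument is correct and fully self-contained. The paper takes a very different route: it invokes \cite[Proposition~4.9.1]{GLS} to identify $C_{\PGL_2(q)}(x)\cong\PGL_2(q_0)$ in one stroke, and then appeals to \cite[Proposition~4.5.3]{KL} on subfield subgroups to read off the intersection $K\cap C_{\PGL_2(q)}(x)$, distinguishing the two cases. Your approach instead works directly in $\SL_2(q)$: you describe the preimage of $C_K(x)$ as $\Pi=\Pi^+\cup\Pi^-$, dispose of $\Pi^-$ by a parity/iteration argument when $k$ is odd or $q$ is even, and in the remaining case $(k,q)=(2,\text{odd})$ build an explicit homomorphism $\Pi\to\PGL_2(q_0)$ using a fixed $\lambda$ with $\lambda^{q_0}=-\lambda$. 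The paper's proof is much shorter but leans on two nontrivial structural references; yours is longer but entirely elementary and transparent about \emph{why} the extra diagonal coset appears exactly when $k=2$ and $q$ is odd (namely, because $\Pi^-$ is nonempty precisely then, and maps onto the nontrivial coset of $\PSL_2(q_0)$ in $\PGL_2(q_0)$ via matrices of non-square determinant $\lambda^{-2}$). Either approach is perfectly acceptable; yours has the pedagogical advantage of not requiring the reader to chase references.
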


\begin{proof}
 Note that $o(x) =k$ and since $\PGL_{2}(q) \cong \mathrm{  Inndiag}( \PSL_{2}(q) )$ it follows that 
 \[C_{\PGL_{2}(q)}(x)\cong \mathrm{Inndiag}( O^{r^{\prime}}(C_{\PSL_{2}(q)}(x) )) \cong  \PGL_{2}(q_{0})\]
  (see \cite[Proposition 4.9.1]{GLS} for example).
%  By  \cite[Proposition 4.9.1(a)]{GLS} (with $  p=k $), we have $O^{r^{\prime}}( C_{\PSL_{2}(q)}(x)  ) \cong  \PGL_{2}(q_{0})$. 
%  Therefore, we have $  C_{\PGL_{2}(q)}(x)\cong \PGL_{2}(q_{0})$. 
  Now $C_{K}(x)=K\cap  C_{\PGL_{2}(q)}(x)$ and we use  \cite[Proposition 4.5.3]{KL} in order to obtain the precise structure of $K\cap C_{\PGL_{2}(q)}(x)$.
% ($H_{\overline{\Omega 
%}}$ in their notation).
% By \cite[4.5.3(I)]{KL}, if $q$ is odd, then we have $c=2$ when $
%k=2$ and $c=1$ when $k$ is odd, while if $q$ is even, then $c=1$. Now the claim  follows from \cite[4.5.3(II)]{KL}.
\end{proof}

\begin{lemma}
 \label{Lem_CentralizerCosetArgument}Let $G$ be a finite group, let $x\in  \Aut (G)$ have prime power order $p^{\alpha}$, and let $M=C_{G}(x)$. Suppose that $p$ divides   $| G:M|$, and that either $M = N_G(M)$  or $Z(M) =1$. Then there exists a conjugate of $M$ that is normalized but not centralized by $x$.
\end{lemma}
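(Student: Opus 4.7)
The plan is to reduce the conclusion to a single fixed-point count, exploiting the observation that ``not centralized'' comes for free once the conjugate is distinct from $M$.

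First I would note: if $M^g$ is $x$-invariant and $M^g \neq M$, then $x$ cannot centralize $M^g$, for if it did we would have $M^g \le C_G(x) = M$ and, by comparing orders, $M^g = M$. So the lemma will follow once we show that $\langle x \rangle$ fixes at least one conjugate of $M$ other than $M$ itself. Writing $N := N_G(M)$ and identifying the set $\Omega$ of $G$-conjugates of $M$ with $G/N$, and noting that $\langle x \rangle$ is a $p$-group acting on the finite set $\Omega$, the standard fixed-point congruence gives $|\Omega^x| \equiv |G:N| \pmod p$. Since $M$ itself is always fixed, it is enough to show $p \mid |G:N|$, as then $|\Omega^x| \ge p \ge 2$.

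In the case $M = N_G(M)$, this is immediate, since $|G:N| = |G:M|$, which is divisible by $p$ by hypothesis. For the case $Z(M) = 1$, put $C := C_G(M)$ and observe $C \le N$. The crucial step is that $\langle x \rangle$ acts trivially on $N/C$, regarded as a subgroup of $\Aut(M)$. Indeed, for $n \in N$ and $m \in M$, both $m$ and $n^{-1}mn$ lie in $M$ and hence are fixed by $x$, so
\[
(n^x)^{-1} m\, n^x \;=\; (n^{-1} m n)^x \;=\; n^{-1} m n,
\]
which says $n$ and $n^x$ induce the same automorphism of $M$, i.e., $n^{-1} n^x \in C$. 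Now consider the $\langle x \rangle$-action on $N/M$: a coset $nM$ is $x$-fixed iff $n^{-1} n^x \in M$, and combined with $n^{-1} n^x \in C$ this forces $n^{-1} n^x \in C \cap M = Z(M) = 1$, whence $n \in C_G(x) \cap N = M$. Thus $M$ itself is the only $x$-fixed coset in $N/M$, and applying the fixed-point congruence to this action yields $|N:M| \equiv 1 \pmod p$, so $p \nmid |N:M|$. With $p \mid |G:M| = |G:N| \cdot |N:M|$, this forces $p \mid |G:N|$, as required.

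The main subtlety is the $Z(M) = 1$ case: the triviality of $Z(M)$ is exactly what is needed to make the action of $x$ on $N/M$ have a unique fixed point, and from there the argument unfolds as a clean two-step fixed-point count, first shifting the $p$-divisibility from $|N:M|$ onto $|G:N|$, and then producing a second $x$-fixed conjugate in $\Omega = G/N$.
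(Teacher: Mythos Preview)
Your proof is correct, and it shares the decisive computation with the paper's argument (namely that for $n\in N_G(M)$ one has $n^{-1}n^x\in C_G(M)\cap M=Z(M)$), but the organization is genuinely different. The paper performs a \emph{single} fixed-point count, on the cosets $G/M$ rather than on the conjugates $G/N$: since $p\mid|G:M|$ there is a fixed coset $Mg$ with $g\notin M$, whence $g^x=m_0g$ for some $m_0\in M$ and $(M^g)^x=M^g$; the hypotheses are then invoked only in a short contradiction argument to exclude $M^g=M$ (equivalently, to exclude $g\in N_G(M)\setminus M$). Your version instead front-loads the observation that ``$M^g\neq M$ implies not centralized'' and then splits the count into two steps, first on $N/M$ to push the $p$-divisibility onto $|G:N|$, and then on $G/N$. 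The paper's route is a little shorter; yours is more modular and makes transparent exactly where each hypothesis is used.
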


\begin{proof}
 By our assumptions, $p$ divides the number of fixed points of the action of $\langle x\rangle $ on the set of right cosets of $M$ in $G$ via $  (Mg) ^{x}=Mg^{x}$. Since $M$ is a fixed  point for this action  %($M=C_{G}(x)$)
   there exists $g\in G-M$  such that $(Mg) ^{x}=Mg^{x}=Mg$. It follows that $g^{x}=m_{0}g$  for some $m_{0}\in M$. We have 
\begin{equation*}
(M^{g}) ^{x}=(M^{x}) ^{g^{x}}=(M)
^{g^{x}}=M^{m_{0}g}=M^{g}\text{,}
\end{equation*}
 and thus $x$ normalizes $M^{g}$. Assume for a contradiction that $x$ centralizes $  M^{g}$. Then $M^{g}\leq M$ implying that $M^{g}=M$. Thus we may assume that $M$ is not  self-normalizing in $G$ ($g\in G-M$), and therefore we may assume that $Z(M) =1$.  Hence, for every $m\in M$, we have $m^{g}=m^{g^{x}}$, and therefore $  g^{x}g^{-1}=m_{0}\in Z(M)$ and $m_0=1$. But $g^{x}\neq g$ since $g\in G-M$, which is a contradiction.
\end{proof}

\section{Reduction to the almost simple case \label{Sect_reduction}}

 The first step in proving Theorems \ref{Th_R(G)Characterization<x,y>yoddu}  and \ref{Th_R(G)Characterization<x,y>xoddpy2} is to reduce the proof to a question about almost simple groups. In the  following, $F(G)$ denotes the Fitting subgroup of $G$, $F^{\ast}(G) =F(G) E(G)$ denotes the generalized  Fitting subgroup of $G$, where $E(G)$ is the layer, and $  soc(G)$ is the product of all minimal normal subgroups of $G$.  Note that if $R(G) =1$, then $F(G) =1$ and $F^{\ast}(G) =E(G) =soc(G)$ where, in this  case, $soc(G)$ is a direct product of simple,  nonabelian groups. Each of  these simple groups is  referred to as a \emph{component} of $G$. Furthermore, since $F^{\ast}(G) =soc(G)$ has a trivial centre, and since $C_{G}( F^{\ast}(G) ) \leq F^{\ast}(G)$ (see \cite[(31.13)]{Asbook} for example), we have $  C_{G}( F^{\ast}(G) ) =1$, and hence $G$ acts on $  F^{\ast}(G)$ by conjugation as a group of automorphisms embedded in $\Aut ( F^{\ast}(G) )$. 
%Finally,
%if all the $L_{i}$ are isomorphic to each other, then $\Aut ( F^{\ast
%}(G) ) \cong ( \mathrm{Aut}(L))
%^{t}\rtimes S_{t}$.
% where $\rtimes $ is the semidirect product relative to
%the action of $S_{t}$ on $( \mathrm{Aut}(L) ) ^{t}$
%permuting the $t$ copies of $\Aut (L)$.

\subsection{Reduction of Theorem \protect\ref{Th_R(G)Characterization<x,y>yoddu} to the almost simple case}

\begin{lemma}
 \label{Lem_xCharacterizationNotInN_G(L)} Let $G$ be a finite group such that $F(G)=1$. Let $x\in G$ and let $L$ be a component of $G$ such that $ x\notin N_{G}(L)$. Then there exist an odd prime $p$ and a $p$-element $y\in G$ such that $\langle x,y\rangle $ is not solvable.
\end{lemma}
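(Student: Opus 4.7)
The plan is to produce a single $p$-element $y\in F^{\ast}(G)$ spread across two distinct components in the $x$-orbit of $L$, so that conjugation by $x$ shuffles the nontrivial parts into a configuration whose projection onto $L$ exposes a nonsolvable subgroup of $L$. Because $F(G)=1$ we have $R(G)=1$ and $F^{\ast}(G)$ is the direct product of the nonabelian simple components of $G$, one of which is $L$.

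By Burnside's $p^{a}q^{b}$ theorem together with Feit-Thompson, $|L|$ has at least three distinct prime divisors, so in particular there is an odd prime $p\ge 5$ dividing $|L|$. Fix such a $p$ and an element $a\in L$ of order $p$. Since $L$ is simple, $R(L)=1$, so $a\notin R(L)$, and Theorem~\ref{Th_A_G1} applied to $L$ produces an element $g\in L$ for which $\langle a, a^{g}\rangle$ is nonsolvable.

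Now define
\[
y\;:=\;a\cdot (a^{g})^{x^{-1}}\;\in\;L\cdot L^{x^{-1}}.
\]
Because $x\notin N_{G}(L)$, the components $L$ and $L^{x^{-1}}$ are distinct, so the two factors of $y$ lie in commuting simple subgroups; both have order $p$, so $y$ is a $p$-element. A direct computation yields
\[
y^{x}\;=\;a^{x}\cdot a^{g}\;\in\;L^{x}\cdot L,
\]
which also lies in $F^{\ast}(G)$. Let $\pi\colon F^{\ast}(G)\to L$ denote the canonical projection onto the direct factor $L$; it is a group homomorphism because distinct components commute. Since $L^{x}\neq L$ and $L^{x^{-1}}\neq L$, the factors of $y$ and $y^{x}$ lying outside $L$ project to the identity, leaving $\pi(y)=a$ and $\pi(y^{x})=a^{g}$. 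Hence $\pi(\langle y, y^{x}\rangle)\supseteq\langle a, a^{g}\rangle$ is nonsolvable, so $\langle y, y^{x}\rangle\subseteq\langle x, y\rangle$ is nonsolvable, as required.

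The key conceptual point is the two-term design of $y$: the second factor $(a^{g})^{x^{-1}}$ is placed in the ``pre-image'' component $L^{x^{-1}}$ precisely so that applying $x$ returns it to $L$ together with the unwelcome $a^{x}\in L^{x}$, which the projection $\pi$ then discards. Choosing $g$ inside $L$ (by applying Theorem~\ref{Th_A_G1} to the simple group $L$ rather than to $G$) is essential, for otherwise $a^{g}$ would not land in $L$ and the projection argument would collapse; the restriction $p\ge 5$ simultaneously sidesteps the exceptional cases collected in Theorem~\ref{ThmA*}.
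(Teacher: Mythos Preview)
Your argument is correct and follows essentially the same strategy as the paper: manufacture a $p$-element supported on two distinct components in the $\langle x\rangle$-orbit of $L$ so that, after conjugating by $x$ and projecting onto $L$, one sees two conjugate elements of order $p\ge 5$ generating a nonsolvable subgroup. The paper first passes to a minimal counterexample in order to write $x=(\sigma_1,\dots,\sigma_t)\tau$ in wreath-product coordinates and then takes $y=(l_1,1,\dots,1,l_2^{\sigma_t^{-1}})$; your coordinate-free choice $y=a\cdot(a^{g})^{x^{-1}}$ is exactly the same construction with the $\sigma_t^{-1}$ twist absorbed into the conjugation by $x^{-1}$, and your explicit appeal to Burnside and Feit--Thompson to secure a prime $p\ge 5$ makes precise a step the paper leaves implicit.
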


\begin{proof}
Suppose that the claim is false and let $G$ be a minimal counterexample.  
Consider $\{L^{x^{i}}\mid i=0,1 \ldots\}$, which is the orbit of $L$ under the conjugation  action of $\langle x\rangle $. For each $z\in \langle  x\rangle $, we have either $L\cap L^{z}=1$ and $[  L,L^{z}] =1$, or $L^z=L$. Let $t=|  \{ L^{x^{i}}\mid i=0,1 \ldots\} |$ and note that $t\geq 2$ since $  x\notin N_{G}(L)$. 
Consider $H= \langle x,L \rangle$ and note that  $x \not\in R(H)$. Indeed, if $x \in R(H)$, then on the one hand, $[x,l] \in R(H)$ for all $l \in L$. On the other hand,   we have $[x,l] \in L \times L^x$, thus $[x,l] \in R(H) \cap (L \times L^x) =\{1\}$ for all $l \in L$, which contradicts our assumption that $x \not\in N_G(L)$.  Since $x \not\in R(H)$, the group $H/R(H)$ satisfies the assumptions of the lemma, so, by minimality of $G$, we may assume that $G=H=\langle x,L \rangle$ and $R(G)=1$. 

We have $F^*(G) =soc(G) = L_0 \times L_1 \times	\cdots L_{t-1}$ where each $L_i = L^{x^{j}}$ for some integer $j$ and we may assume that $L_0=L$.
%  Let $H=\langle L,x\rangle \leq G$. Consider $\{  L^{x^{i}}\mid i=0,1 \ldots\}$, which is the orbit of $L$ under the conjugation  action of $\langle x\rangle $. For each $z\in \langle  x\rangle $, we have either $L\cap L^{z}=1$ and $[  L,L^{z}] =1$ or $L^z=L$. By standard arguments on finite cyclic actions, $  L_{i}=L^{x^{i}}$ where $0\leq i\leq t-1$ are all distinct, and $t=|  \{ L^{x^{i}}\mid i=0,1 \ldots\} |$. Note that $t\geq 2$ since $  x\notin N_{G}(L)$. Set $L^{t}=L_{0}\times \cdots \times L_{t-1}$. It follows that $H=L^{t}\langle x\rangle $. Since $R(L^{t}) =1$ we have $R(H) \cap L^{t}=1$ and hence $R(H)$ centralizes $L^{t}$. Set $\overline{H}=H/R(H) =  \overline{L}^{t}\langle \overline{x}\rangle $ where $\overline{L}  \cong L$, and $\overline{x}=xR(H)$. The pair $(\overline{H},\overline{x})$ satisfies all our assumptions on the pair $(G,x)$: $F(\overline{H}) =1$ and $\overline{H}$ has a  component isomorphic to $L$ that is not normalized by $\overline{x}$ (since $R(H)$ centralizes $L$). Furthermore, since $R(H)$  is solvable, if $\overline{y}\in \overline{H}$ is a $p$-element such that $\langle \overline{x},\overline{y}\rangle $ is nonsolvable, then $\langle x,y\rangle $ is nonsolvable, for any $p$-element $y\in H$ that is a preimage of $\overline{y}$. Hence we may assume that $G=L^{t}\langle x\rangle$ and therefore that $F^{\ast}(G) =L^{t}$ 
  Now $\Aut ( F^{\ast}(G) ) \cong ( \mathrm{Aut}(L) )^{t}\rtimes S_{t}$ and since $F(G) =1$, we can identify $G$ with a  subgroup of $\Aut ( F^{\ast}(G))$ and write $x=(\sigma _{1},\ldots,\sigma _{t}) \tau $ where $\sigma _{i}\in \Aut (L_{i-1})$ and $\tau \in S_{t}$ is  a $t$-cycle. By relabelling the $L_{i}$ if necessary we may assume that $\tau =(1,2,\ldots,t)$, and $\tau ^{-1}(\sigma  _{1},\ldots,\sigma _{t}) \tau =(\sigma _{t},\sigma _{1},\ldots,\sigma  _{t-1})$. By Theorem \ref{ThmA*}, there exist   $l_{1},l_{2}\in L$ of prime order $p \ge 5$ such that $\langle l_{1},l_{2}\rangle $ is not solvable. Let $y=(l_{1},1,\ldots,1,l_{2}^{\sigma _{t}^{-1}}) \in G$. Then $y$ has order $p$ and
\begin{align*}
x^{-1}yx& =\tau ^{-1}(\sigma _{1}^{-1},\ldots,\sigma _{t}^{-1})
(l_{1},1,\ldots,1,l_{2}^{\sigma _{t}^{-1}}) (\sigma
_{1},\ldots,\sigma _{t}) \tau = \\
& =\tau ^{-1}(l_{1}^{\sigma _{1}},1,\ldots,1,l_{2}^{\sigma
_{t}^{-1}\sigma _{t}}) \tau =(l_{2},\ldots,) \text{.}
\end{align*}
 Now let $\pi  _{1}:G\rightarrow L_{0}$ be the projection homomorphism onto $L_0$. Then $\pi _{1}(\langle y^{x},y\rangle )$ contains a copy of $\langle l_{1},l_{2}\rangle $, hence $\langle x,y\rangle $ is not solvable.
\end{proof}

\begin{lemma}
\label{Lem_ReductyodduCharacterizationToAlmost}
 If $G$ is a minimal counterexample to Theorem \ref{Th_R(G)Characterization<x,y>yoddu},  then $G$ is almost simple. In particular,   if Theorem \ref{Th_R(G)Characterization<x,y>yoddu} holds for almost simple groups, then it holds for all finite groups.
\end{lemma}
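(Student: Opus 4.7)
The plan is to argue by minimal counterexample. Suppose $(G,x)$ is a counterexample of smallest $|G|$, so $\langle x,y\rangle$ is solvable for every odd prime $p$ and every $p$-element $y\in G$, while $x\notin R(G)$. I reduce in three steps: first to $R(G)=1$, then to $x$ normalizing every component of $F^{\ast}(G)$, and finally to $F^{\ast}(G)$ being simple.

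For the first reduction, every $p$-element of $G/R(G)$ lifts to a $p$-element of $G$ (take the $p$-part of any preimage), so the hypothesis on $(G,x)$ descends to $(G/R(G),xR(G))$, and $xR(G)\notin R(G/R(G))=1$. If $R(G)\neq 1$ then $(G/R(G),xR(G))$ would be a smaller counterexample, so $R(G)=1$. Consequently $F^{\ast}(G)=soc(G)=L_{1}\times\cdots\times L_{s}$ is a direct product of nonabelian simple components and $C_{G}(F^{\ast}(G))=1$.

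The hypothesis on $(G,x)$ exactly negates the conclusion of Lemma \ref{Lem_xCharacterizationNotInN_G(L)}, so by contrapositive $x\in N_{G}(L_{i})$ for every $i$. If $x$ centralized every $L_{i}$ then $x\in C_{G}(F^{\ast}(G))=1$, contradicting $x\notin R(G)$; so $x$ acts nontrivially on some component $L_{1}$.

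It remains to show $s=1$. Assume $s\geq 2$ and set $K=\langle x,L_{1}\rangle$. Since $L_{1}\trianglelefteq K$, the quotient $K/L_{1}$ is cyclic (generated by $xL_{1}$) and hence cannot contain the nonabelian simple group $L_{2}$; thus $K<G$. The intersection $R(K)\cap L_{1}$ is a solvable normal subgroup of $L_{1}$, hence trivial, so $[R(K),L_{1}]\leq R(K)\cap L_{1}=1$, which forces $R(K)$ to centralize $L_{1}$. Since $x$ does not centralize $L_{1}$, it follows that $x\notin R(K)$. Applying minimality to $(K,x)$ produces an odd prime $p$ and a $p$-element $y\in K\subseteq G$ with $\langle x,y\rangle$ nonsolvable, contradicting the hypothesis on $G$. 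Therefore $s=1$ and $G\leq\Aut(L_{1})$ with socle $L_{1}$, so $G$ is almost simple. The only genuinely nontrivial step is identifying the subgroup $K=\langle x,L_{1}\rangle$ on which to invoke the inductive hypothesis; the rest is routine manipulation of the generalized Fitting subgroup.
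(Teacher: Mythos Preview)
Your proof is correct and follows essentially the same route as the paper: reduce to $R(G)=1$, invoke Lemma~\ref{Lem_xCharacterizationNotInN_G(L)} to force $x\in N_G(L_i)$ for all components, pick a component on which $x$ acts nontrivially, and apply minimality to $K=\langle x,L_1\rangle$. The only difference is in the endgame: the paper first replaces $x$ by a suitable power so that $x$ acts \emph{faithfully} on $L_1$, whence $K$ is already almost simple (so $R(K)=1$ automatically) and minimality forces $G=K$; you instead keep $x$ as is, verify $x\notin R(K)$ directly via $[R(K),L_1]=1$, and use the observation that $K/L_1$ is cyclic to rule out $s\ge 2$. Your variant avoids the slightly awkward ``replace $x$ by $x^d$'' step at the cost of a short $R(K)$ computation; both are clean and neither is materially shorter.
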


\begin{proof}
 Let $G$ be a minimal counterexample to Theorem \ref{Th_R(G)Characterization<x,y>yoddu}. We will show that $G$ is an almost simple group.  By assumption there exists $x\in G-R(G)$ such that for every odd prime $p$, $\langle x,y\rangle $ is solvable for all $p$-elements $y\in G$.

 Suppose that $R(G)$ is nontrivial. Set $\overline{G}=G/R(G)$ and  note that $| \overline{G}| <| G|$. On  the one hand, $\overline{x}=xR(G) \notin R(\overline{G})$  (which is trivial) because $x\notin R(G)$. On  the other hand, $\langle \overline{x},\overline{y}\rangle $ is  solvable for all $p$-elements $\overline{y}\in \overline{G}$, for all odd primes $p$. This implies, by minimality of $G$, that $\overline{x}\in R(\overline{G})$. Thus we have a contradiction.

 So we may assume that $R(G) =F(G) =1$. Since $x\notin R(G)$ we know that $G$ is nonsolvable and $G$ has at least one component $L$.  If $x\notin  N_{G}(L)$, then we can apply Lemma \ref{Lem_xCharacterizationNotInN_G(L)}. Therefore we may assume that $x\in N_{G}(L)$ for each component $L$ of $G$. Since $ C_{G}( F^{\ast}(G) ) =1$, 
%, and $x\neq 1$, 
 there exists a component $L$ of $G$ on which $\langle x\rangle $ acts  nontrivially. Let $\langle  x^{d}\rangle$ be the kernel of this action. If $x^{d}\neq  1$, then $(G,x^d)$ is also a minimal counterexample and so we may assume without loss of generality that $x^d=1$ and therefore that $\langle x\rangle $ acts faithfully on $L$.
%, by previous arguments, $\langle x^{d}\rangle $ acts
%nontrivially on another component $\widetilde{L}$ of $G$. Replacing $x$ by $
%x^{d}$ and $L$ by $\widetilde{L}$ we can reiterate the argument. 
%Thus we may
%assume, without loss of generality, that $\langle x\rangle $ acts
%faithfully on $L$. 
In particular, $\langle x,L\rangle $ embeds in $\Aut (L)$ and since $G$ is a minimal counterexample, we have $G= \langle x,L \rangle$. Therefore $L \unlhd G \le \Aut(L)$ and $G$ is almost simple. 
%Since $L\trianglelefteq  \langle
%x,L\rangle $, we conclude that $\langle x,L\rangle $ is an
%almost simple group. In particular, $R(\langle x,L\rangle
%) =1$, so $x\notin R(\langle x,L\rangle )$.
%Consequently, $\langle x,L\rangle $ is a counterexample to
%Theorem \ref{Th_R(G)Characterization<x,y>yoddu}. By minimality, $G= 
%\langle x,L\rangle $ and so $G$ is an almost simple group.
\end{proof}

\begin{remark}
 \label{Rem_CanAssumexTobep-ele}\emph{Lemma   \ref{Lem_ReductyodduCharacterizationToAlmost} shows that in order to prove Theorem \ref{Th_R(G)Characterization<x,y>yoddu}, it suffices to prove that for all almost simple groups $G$ and for all nontrivial $x \in G$, there exist an odd  prime $p$ and a $p$-element $y\in G$ such that $\langle  x,y\rangle $ is nonsolvable. In fact, since $\langle x^{k},y\rangle$ being nonsolvable implies that $\langle x,y\rangle$ is nonsolvable, it suffices to check all $x \in G$ of prime order.} %Since $\langle x^{k},y\rangle \leq
%\langle x,y\rangle $ for any integer $k$, it follows from Lemma 
%\ref{Lem_ReductyodduCharacterizationToAlmost} that Theorem \ref
%{Th_R(G)Characterization<x,y>yoddu} holds if and only if for each almost
%simple group $G$, and for each $x\in G$ of prime order, there exists an odd
%prime $p$ and a $p$-element $y\in G$ such that $\langle
%x,y\rangle $ is nonsolvable.}
\end{remark}

\begin{lemma}
 \label{Lem_InAlmostForxp>3ExistsOdduySt<x,y>NS}Let $G$ be an almost simple  group and let $x\in G$ of prime order $u \ge 5$. Then there exists an odd prime $p$ and a $  p$-element $y\in G$ such that $\langle x,y\rangle $ is not solvable.
\end{lemma}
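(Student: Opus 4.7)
The proof of this lemma should be essentially immediate from Theorem \ref{ThmA*}. The plan is to apply that theorem directly with $p = u$ and then take $y = x^g$ as the desired $p$-element.

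More precisely, since $u \geq 5$ is a prime, $u$ is odd. By hypothesis $x \in G$ has odd prime order $u$, so Theorem \ref{ThmA*} applies to $x$. According to that theorem, one of two alternatives holds: either (1) there exists $g \in G$ such that $\langle x, x^g \rangle$ is nonsolvable, or (2) we are in one of the exceptional cases listed, all of which require the prime in question to equal $3$. Since $u \geq 5$, alternative (2) is excluded, and therefore alternative (1) must hold.

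Pick such a $g \in G$ and set $y := x^g$. Then $y$ has the same order as $x$, namely $u$, so $y$ is a $u$-element of $G$. Since $u$ is an odd prime and $\langle x, y \rangle = \langle x, x^g \rangle$ is nonsolvable by construction, the pair $(p, y) := (u, x^g)$ satisfies the conclusion of the lemma.

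There is no real obstacle here: the work has already been carried out in the proof of Theorem \ref{ThmA*}, and this lemma is just the observation that when the prime order of $x$ is at least $5$, the conjugate $x^g$ produced by Theorem \ref{ThmA*} already serves as the required $p$-element. The content of the present paper in the direction of Theorem \ref{Th_R(G)Characterization<x,y>yoddu} will therefore lie entirely in handling elements of prime order $2$ and $3$; the case $u \geq 5$ reduces trivially to known results.
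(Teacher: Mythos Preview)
Your proof is correct and matches the paper's own proof essentially verbatim: apply Theorem~\ref{ThmA*}, observe that alternative~(2) is impossible since $u\ge 5$, and take $p=u$, $y=x^g$. The paper's version is more terse but the argument is identical.
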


\begin{proof}
 By Theorem \ref{ThmA*} there exists $g\in G$ such that $\langle  x,x^{g}\rangle $ is not solvable. Hence, taking $p=u$ and $y=x^{g}$, $\langle x,y\rangle $ is not solvable.
\end{proof}

 Combining Lemma \ref{Lem_ReductyodduCharacterizationToAlmost}, Remark   \ref{Rem_CanAssumexTobep-ele} and Lemma \ref{Lem_InAlmostForxp>3ExistsOdduySt<x,y>NS}, it is clear that Theorem \ref{Th_R(G)Characterization<x,y>yoddu} will follow from the next theorem.

\begin{theorem}
 \label{Th_InAlmostForxp=2,3ExistsOdduySt<x,y>NS}Let $G$ be an almost simple  group. If $x\in G$ has order $2$ or $3$, then there exist an odd prime $p$ and a $p$-element $y$ for which $\langle x,y\rangle $ is not solvable.
\end{theorem}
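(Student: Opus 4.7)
The plan is to split by the order of $x$, invoke the earlier theorems for order $3$, and then perform a detailed case analysis for involutions.

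First I would dispatch the case $o(x)=3$ using Theorem~\ref{ThmA*}: if conclusion~(1) of that theorem holds, I take $p=3$ and $y=x^{g}$ for the $g$ produced there, since $\langle x,x^{g}\rangle$ is already nonsolvable. Otherwise we are in one of the explicit exceptional configurations~(2)(i) or~(2)(ii), so $G_{0}$ is a specified Lie-type group in characteristic~$3$ with $x$ a long (or for $G_{2}(3)$ short) root element, or $G_{0}\cong \PSU_{d}(2)$ with $x$ a specified inner-diagonal automorphism. For each such configuration I would exhibit an explicit nonsolvable overgroup of $x$ inside $G_{0}$ --- a small simple Lie-type subgroup such as $\PSL_{3}(3)$, $\Sp_{4}(3)$, or $\PSU_{4}(2)$, whose existence can be read off from the natural module of $G_{0}$ --- and locate in it a semisimple $p$-element $y$ for some odd prime $p$ different from the defining characteristic. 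The natural candidates are primitive prime divisors of $3^{2}-1$, $3^{3}-1$, or $2^{e}-1$ (see Remark~\ref{Rem_PrimitivePrimeDivisor}); the structure of the overgroup then forces $\langle x,y\rangle$ to contain a nonsolvable subgroup.

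Next I would turn to the involution case $o(x)=2$, which must be handled by direct case analysis on the socle $G_{0}$ since Theorems~\ref{Th_A_G1}, \ref{ThmA*}, and~\ref{Th_xorderp>3Andinvolution} do not apply. For $G_{0}=A_{n}$ with $n\ge 5$, a direct combinatorial construction yields a $3$-element $y$ that forces $\langle x,y\rangle$ to contain $A_{5}$. Sporadic socles are handled by computer verification (CVL). The bulk of the work is for $G_{0}$ of Lie type, split further by the type of involution $x$. For inner-diagonal involutions ($x\in G_{0}^{\ast}$), I would apply Lemma~\ref{lem:2.2}(b) together with Lemma~\ref{Lem_semisimpleInParabolic} to place a conjugate of $x$ inside a Levi complement of a maximal parabolic, and then pick $y$ of odd prime order in that parabolic so that $\langle x,y\rangle$ contains a nonsolvable classical subgroup. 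For graph, field, and graph-field involutions, I would invoke Lemmas~\ref{Lem_FieldAutCoset}, \ref{Lem_involutiveGraphAutAnepsilon}, and~\ref{Lem_involutiveGraphAutE_6} to reduce to explicit class representatives; the centralizer $C_{G_{0}}(x)$ is then itself a Lie-type subgroup defined over a subfield (as in Lemma~\ref{Lem_CentralizerOfFieldAutOfPSL_2(q)}) or of smaller Lie rank, and Lemma~\ref{Lem_CentralizerCosetArgument} lets me produce a conjugate of $C_{G_{0}}(x)$ that is normalized but not centralized by $x$, from which the required $y$ can be extracted.

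The principal obstacle will be the sheer quantity of cases in the Lie-type involution analysis: each Dynkin type, rank, characteristic, and outer-automorphism class must be handled, and small-rank or small-field configurations --- $\PSL_{2}(q)$, $\PSL_{n}^{\varepsilon}(q)$ with small $n$ or $q$, Suzuki and Ree groups, $G_{2}(q)$, $^{3}D_{4}(q)$ --- in which the generic argument either produces a solvable overgroup or finds no available primitive prime divisor will need CVL verification. A secondary localized obstacle is case~(2)(i) of Theorem~\ref{ThmA*}: there $x$ is a long root element in characteristic~$3$, so $y$ must avoid characteristic~$3$, and one must certify that the chosen odd prime $p$ actually appears in an overgroup of $x$ of sufficient rank to be nonsolvable.
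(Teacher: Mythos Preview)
Your outline is essentially the paper's approach: the $o(x)=3$ case via Theorem~\ref{ThmA*} with reduction of the exceptional configurations to small subsystem subgroups ($\PSL_3(3)$, $\PSp_4(3)$, $\PSU_3(3)$, $\PSU_4(2)$) handled by CVL, and the involution case by a minimal-counterexample descent through parabolics and outer-automorphism class representatives via Lemmas~\ref{Lem_FieldAutCoset}--\ref{Lem_CentralizerCosetArgument}.

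There is one concrete gap. For inner-diagonal involutions you invoke only Lemma~\ref{lem:2.2}(b) together with Lemma~\ref{Lem_semisimpleInParabolic}, but both of these require $x$ to be semisimple, i.e.\ of order coprime to the defining characteristic $r$. When $r=2$ the involution $x\in G_0^{\ast}$ is unipotent, and neither lemma applies; indeed a unipotent element need not lie in any Levi complement. The paper handles this separately using Lemma~\ref{lem:2.2}(a): one finds two distinct maximal parabolics over a common Borel, and $x$ is conjugate into $P_i\setminus U_i$ for one of them, so its image in $P_i/U_i$ is a nontrivial unipotent element of the Levi, enabling the same inductive descent. The residual cases with $|\widehat{\Pi}|=1$ or with all parabolic pairs having a solvable Levi component are $\PSL_2(2^a)$, $\PSU_3(2^a)$, ${}^2B_2(2^a)$ (single involution class, Lemma~\ref{Lem_SingleAut(G)classOf involutions}) and a short CVL list. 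You should insert this unipotent branch explicitly.

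A minor point: for $G_0\cong A_n$ you propose a $3$-element $y$ forcing $\langle x,y\rangle\supseteq A_5$, but with $x$ a transposition and $y$ a $3$-element the generated group need not have order divisible by $5$. The paper instead takes $y=(1,2,3,4,5)$ and checks directly that $\langle x,y\rangle$ contains $\mathrm{Alt}\{1,\dots,5\}$ for representatives $x=(1,2)$, $(1,2)(3,4)$, $(1,2)(3,4)(5,6)\sigma$.
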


\subsection{Reduction of Theorem \protect\ref{Th_R(G)Characterization<x,y>xoddpy2} to the almost simple case}

 Let $G$ be a minimal counterexample to Theorem \ref{Th_R(G)Characterization<x,y>xoddpy2}, so that $G$ contains a $p$-element $x$ ($p$ odd) such that $x \not \in R(G)$ and $\langle x,y\rangle $ is solvable for all $2$-elements $y$. Since $G$ is a minimal counterexample, $R(G)$ must be trivial. We therefore may as well assume that $x$ has order $p$.  
 
 Let $L$ be a component of $G$.   First suppose that $x \not\in N_G(L)$. Then as in the proof of Lemma \ref{Lem_xCharacterizationNotInN_G(L)}, we may assume that  $G=\langle x,L\rangle =L^{p}\langle x\rangle \leq \Aut (L)\rtimes S_{p}$. So $  x=(\sigma _{1},\ldots,\sigma _{p})\tau$, where $\sigma _{i}\in \Aut (L)$ and $\tau \in S_{p}$ is a $p$-cycle. Without loss of generality, we  may assume that $\tau =(1,2,\ldots,p)$ and moreover, conjugating by a suitable $  (u_{1},\ldots ,u_{p})\in \Aut (L)^{p}$ we may assume that $  x=(\sigma ,1,\ldots ,1)(1,2,\ldots,p)$. Furthermore, since $x^{p}=(\sigma,\sigma ,\cdots ,\sigma )=1$, we have $\sigma=1$ and $x=(1,2,\ldots,p)$. Now, by \cite[Theorem A]{MSW}, there exist three involutions in $L$ that generate $L$ unless $  L=\PSU_{3}(3)$, in which case it is easily verified that there exist three $2$-elements in $L$ that generate $L$. So let $y=(y_{1},1,\ldots,1,y_{2},y_{3})\in  L^{p}\leq G$ where the $y_{i}$ are $2$-elements such that $L=\langle  y_{1},y_{2},y_{3}\rangle $. Then $y$ is a $2$-element of $G$ and $  y=(y_{1},1,\ldots,1,y_{2},y_{3})$, $y^{x}=(y_{3},y_{1},1,\ldots,1,y_{2})$ and $  y^{x^{2}}=(y_{2},y_{3},y_{1},1,\ldots,1)$ are all contained in $\langle x,y\rangle $. These three elements  generate a subgroup of $\langle x,y\rangle $ whose projection onto  the first component of $L^{p}$ contains $\langle  y_{1},y_{2},y_{3}\rangle =L$, which is not solvable. Thus, assuming that $x \not\in N_G(L)$ leads to a contradiction, so we conclude that $x \in N_G(L)$.  Now arguing as in the proof of  Lemma \ref{Lem_ReductyodduCharacterizationToAlmost}  shows that  $G$ is almost simple.  Moreover, Theorem \ref{Th_xorderp>3Andinvolution} implies that $p=3$ and it therefore remains to prove the following. 
\begin{theorem}
 \label{Th_3plus2}Let $G$ be an almost simple group and let $x\in G$ of order $3$. Then there exists a $2$-element $y\in G$ such that $\langle x,y\rangle $ is not solvable. 
\end{theorem}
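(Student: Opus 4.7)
The plan is case analysis on the socle $G_{0}$ of $G$, relying on the classification of finite simple groups. For each $G$-conjugacy class of order-$3$ elements $x$, I will either exhibit an explicit $2$-element $y$ such that $\langle x,y\rangle$ is nonsolvable, or reduce to a bounded list of groups which are added to the CVL.

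For alternating socles $G_{0}=A_{n}$ with $n\geq 5$, an order-$3$ element $x$ is a product of disjoint $3$-cycles. Choosing an involution $y$ whose support overlaps that of $x$ suitably, $\langle x,y\rangle$ is transitive on the union of the two supports and contains a $3$-cycle, so by Jordan's theorem on primitive groups containing a $3$-cycle (combined with a direct check for small $n$) it contains an alternating subgroup $A_{m}$ with $m\geq 5$ and is hence nonsolvable. Sporadic socles are added to the CVL.

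For $G_{0}$ of Lie type of defining characteristic $r$, I would split on $r$ and on the type of $x$. If $r\neq 3$, then either $x\in G_{0}^{\ast}$ is a semisimple $3$-element or $x$ induces a field, graph, or graph-field automorphism of order $3$. In the semisimple case I would apply Lemma \ref{Lem_semisimpleInParabolic} to decide whether $x$ lies in a proper parabolic; if so, Lemma \ref{lem:2.2}(b) places $x$ in a Levi complement $J$ of a maximal parabolic that is not centralized by any Levi component of $J$, and a $2$-element $y$ is extracted from such a Levi component so that $\langle x,y\rangle$ surjects onto a nonsolvable section; otherwise $C_{G_{0}^{\ast}}(x)$ contains no nontrivial $r$-element and is small enough to find $y$ directly in an $\SL_{2}$-subsystem subgroup. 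In the automorphism case, Lemmas \ref{Lem_FieldAutCoset}, \ref{Lem_involutiveGraphAutAnepsilon}, \ref{Lem_involutiveGraphAutE_6} and \ref{Lem_CentralizerOfFieldAutOfPSL_2(q)} identify the $G_{0}^{\ast}$-class of $x$ and exhibit a smaller-rank quasisimple subgroup of $C_{G}(x)$ from which $y$ can be drawn. If $r=3$ and $x$ is unipotent of Lie rank at least $2$, Lemma \ref{lem:2.2}(a) plays the role of part (b).

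The main obstacle is the exceptional list in Theorem \ref{ThmA*}(2): the long-root (and, in $G_{2}(3)$, also short-root) unipotent $3$-elements in the classical and exceptional simple groups defined over $\mathbb{F}_{3}$, together with ${}^{3}\!D_{4}(3)$, and the distinguished semisimple $3$-class in $\PSU_{d}(2)$. For these $x$, $\langle x,x^{g}\rangle$ is solvable for every $g\in G$, so the construction of $y$ cannot be reduced to producing a well-chosen conjugate of $x$. I would handle each such family by embedding $x$ in an $A_{1}$-subsystem subgroup and exhibiting an involution (or, for $\PSU_{d}(2)$ as in Theorem \ref{ThmA*}(2)(ii), a higher $2$-element stabilizing a complementary decomposition) in a second $A_{1}$-subsystem subgroup, so that $\langle x,y\rangle$ contains a rank-$2$ subsystem subgroup whose derived group has a section isomorphic to $\SL_{2}(3^{k})$ with $k\geq 2$, $\SL_{3}(3)$, or $\PSp_{4}(3)$. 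Low-rank base cases where these embeddings degenerate are added to the CVL; identifying the precise subsystem in each infinite family and verifying nonsolvability is where the bulk of the technical work lies.
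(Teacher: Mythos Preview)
Your proposal conflates this theorem with the $o(x)=3$ case of Theorem~\ref{Th_InAlmostForxp=2,3ExistsOdduySt<x,y>NS}. Here $y$ must be a $2$-element, so a conjugate $x^{g}$ of $x$ (a $3$-element) is never a candidate for $y$; consequently the exceptional list of Theorem~\ref{ThmA*}(2) is \emph{not} the main obstacle for Theorem~\ref{Th_3plus2} and plays no role in the paper's proof of it. Likewise, Lemmas~\ref{Lem_involutiveGraphAutAnepsilon} and~\ref{Lem_involutiveGraphAutE_6} classify \emph{involutory} graph automorphisms and say nothing about order-$3$ outer elements; the only order-$3$ graph (or graph-field) automorphisms occur for $D_{4}(q)$ and ${}^{3}D_{4}(q)$, and the paper handles them via the explicit description of the two $G_{0}^{\ast}$-classes in \cite[Proposition~4.9.2]{GLS} together with Lemma~\ref{Lem_CentralizerCosetArgument}.

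The genuine hard case that your outline misses is the one where a semisimple $x$ lies in \emph{no} proper parabolic, so Lemma~\ref{lem:2.2}(b) is unavailable. For classical groups in characteristic $r\neq 3$ the paper does not use parabolic reduction at all; it decomposes the natural module as in \eqref{eqn:decomp} and restricts to small nondegenerate or totally singular summands. This reduction bottoms out (after the solvable and small-field exceptions in $CVL3$) at $G_{0}\cong\PSL_{3}^{\varepsilon}(q)$ with $3\mid(q-\varepsilon)$ and $x$ an \emph{irreducible} outer-diagonal element whose lift to $\GL_{3}^{\varepsilon}(q)$ has order $3^{k}>3$. Here $x$ lies in no parabolic and the centraliser is a torus of order $q^{2}+\varepsilon q+1$, so neither your ``find $y$ in an $\SL_{2}$-subsystem'' suggestion nor a Levi argument applies. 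The paper chooses a specific $2$-element $y$ (regular unipotent if $r=2$, the $2$-part of a Singer-type torus element if $r>3$) and proves $\langle x,y\rangle=\PGL_{3}^{\varepsilon}(q)$ by a fixed-point-ratio counting argument over the maximal subgroups of $\PGL_{3}^{\varepsilon}(q)$ that could contain $y$. Without an analogue of this step your plan has a gap in the base case of the inductive reduction.
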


\section{Proofs for the almost simple case \label{Sect_ProofsForTheASCase}}

\subsection{Proof of Theorem \protect\ref{Th_InAlmostForxp=2,3ExistsOdduySt<x,y>NS}}

\subsubsection{The case $o(x) =3$ of Theorem \protect\ref{Th_InAlmostForxp=2,3ExistsOdduySt<x,y>NS} \label{Sect_ProofFirstTho(x)=3}}

 In this section, we prove that if $G$ is almost simple and $x \in G$  has order $3$, then there exist an odd prime $p$ and a $p$-element $y \in G$ such that $\langle x,y \rangle$ is not solvable.
 
\begin{definition}
  The list $CVL1$ consists of the following groups: $G_{2}(3)$, $\PSL_{3}(3)$, $\PSp_{4}(3)$ and $\PSU_{3}(3)$. For every $G_0 \in  CVL1$ we have verified, using \textsc{Magma}, that if $x\in \Aut (G_0)$ has order $3$, then there exist an odd prime $p$ and a $p$-element $y\in G_0$ for which $\langle x,y\rangle$ is not solvable.
\end{definition}

\begin{proof}[Proof of Theorem \protect\ref{Th_InAlmostForxp=2,3ExistsOdduySt<x,y>NS} for $o(x) =3$]
 By Theorem \ref{ThmA*}, we can either take $p=3$ and $y$ some conjugate of $  x $, or we may assume that Theorem \ref{ThmA*}(2) holds. In the latter case, we show that we can reduce to the case of $\PSL_3(3)$, $\PSp_4(3)$ or $\PSU_3(3)$, all of which are contained in $CVL1$. We split the discussion into the following cases: 
 
 1. The socle $G_0$ is one of the exceptions listed in Theorem \ref{ThmA*}(2)(i), and $x$ is a  long root element in $G_0$.  Since $G_{2}(3) \in CVL1$ we may  assume that $G\neq G_{2}(3)$.  By minimality we have $G=G_0$. The Dynkin diagram of $G$ has $n\geq 2$ nodes. In all cases the  long root elements of $G$ form a single $G_{0}^{\ast}$-conjugacy class of $G_{0}^{\ast}$ (see \cite[Example 3.2.6]{GLS}), and thus we may assume $x=x_{-\alpha_{\ast}}(1)$. Now we use subsystem subgroups (see \cite[Section 2.6]{GLS} for example) to find a  suitable subgroup $A$, which will yield one of the groups in $CVL1$. Let $s$ be a fundamental root adjacent to $\alpha _{\ast}$ in the extended Dynkin diagram. We define a  subsystem $\Sigma _{0}$ of $\Sigma $ with respect to $G$ (\cite[Definition 2.6.1]{GLS}). There are two possibilities. 
 
 a. $G\neq {^2}A_{n}(3)$. Choose $\Sigma_{0}$ to be the intersection of the $\mathbb{Z}$-span of $\{ \alpha _{\ast},s\}$ and $\Sigma $. Then $\Sigma_{0}$ is of type $A_{2}$ or $C_{2}$, and taking $  w=1$ in  \cite[Definition 2.6.1]{GLS}, it follows that that $A=\langle x_{\alpha  _{\ast}}(1),x_{-\alpha_{\ast}}(1),x_{s}(1),x_{-s}(1)\rangle $ is a subgroup of $G$ isomorphic to either $A_{2}(3)$ or $C_{2}(3)$.

  b. $G={^{2}}A_{n}(3)$. For $n=2$ take $A=G$.
%   For $n\geq 3$ choose $\Sigma  _{0}$ to be the intersection of the $\mathbb{Z}$-span of $\{ \alpha_{\ast},\alpha_{1},\alpha _{n}\}$ and $\Sigma $. Then $\Sigma  =A_{3}$ and $w=1$ yields a subsystem subgroup $A_{0}\leq G$ isomorphic to ${^2}A_{3}(3)$. 
  For $n \ge 3$, let 
  \[A=\langle x_{\alpha _{\ast}}(1),x_{-\alpha_{\ast}}(1),x_{-\alpha_{1}}(1)x_{-\alpha _{n}}(1),x_{\alpha _{1}}(1)x_{\alpha _{n}}(1)\rangle\]
    and note that  $A\cong C_{2}(3)$ contains $x$.

 Thus, in either case, the image of $x$ in $A/Z(A)$ is nontrivial and $A/Z(A)$ is one of $\PSL_{3}(3)$, $\PSp_{4}(3)$, $\PSU_{3}(3)$; these groups are contained in $CVL1$.

 2. $G=\PGU_{d}(2)$, where $d\geq 4$, and $x$ is the  image of  $x_{1}\in \GU_{d}(2)$ as described in Theorem  \ref{ThmA*}(2)(ii).  
% Th $x_{1}$ stabilizes a  subspace decomposition $V=V_{1}\perp V_{d-1}$, where $\dim V_{i}=i$, $V_1$ and $V_{d-1}$ are nondegenerate and $x_{1}$ acts as multiplication by $\lambda \in \mathbb{F}_{4}$ of order $3$ on $V_{1}$  and as the identity on $V_{d-1}$. 
 Clearly $x_{1}$ stabilizes a subspace  decomposition $V=V_{4}\perp V_{d-4}$, with $x_1$ acting noncentrally on a nondegenerate $4$-dimensional subspace $V_{4}$. So $x_{1}\in \GU_{4}(2)$. Therefore we can reduce to the case of $G_0=\PSU_4(2)$, and  $\PSU_{4}(2)\cong \PSp_{4}(3)$ is contained in $ CVL1$.
\end{proof}

\subsubsection{The proof of  Theorem \protect\ref{Th_InAlmostForxp=2,3ExistsOdduySt<x,y>NS}  for involutions\label{Sect_ProofFirstTho(x)=2}}

 In this section, $(G,x)$ is a minimal counterexample to the claim of  Theorem \ref{Th_InAlmostForxp=2,3ExistsOdduySt<x,y>NS} when $x$ is an involution. Thus $G$ is almost simple and  $\langle x,y\rangle $ is solvable for every $p$-element $y\in G$ for every odd prime $p$. 
 
\begin{definition}
 The list $CVL2$ consists of the following groups: $A_{6}$, $\PSL_{3}(2)$, $\PSU_{4}(2)$, $\PSU_{5}(2)$, $^{3}D_{4}(2)$, $\PSL_{3}(3)$, $\PSL_{4}(3)$, $\mathrm{P}\Omega _{7}(3)$, $\PSp_{4}(3)$, $\PSp_{6}(3)$, $G_{2}(3)$, $\PSU_{4}(3)$, $^{2}D_{4}(3)$, $\PSU_{3}(3)$, $\mathrm{P}\Omega _{8}^{\pm}(2)$, $\mathrm{P}\Omega _{8}^{\pm}(3)$, $F_{4}(2)$, $^{2}F_{4}(2) ^{\prime}$. For each $G_0\in CVL2$, we have verified using \textsc{Magma} that if  $x \in \Aut (G_0)$ has order $2$,  then  there exist an odd prime $p$ and  a $p$-element $y\in G_0$ for which $\langle x,y\rangle $ is not solvable. \end{definition}

%%while for any pair $(%G_{1},x_{1})$ where $G_{1}$ is almost simple, $x_{1}\in G$ is an
%involution and $| G_{1}| <| G|$,
%there exists an odd prime $p$ and a $p$-element $y\in G_{1}$, such that $
%\langle x,y\rangle $ is nonsolvable.

\begin{lemma}
 \label{Lem_SingleAut(G)classOf involutions} If $(G,x)$ is a minimal counterexample to Theorem \ref{Th_InAlmostForxp=2,3ExistsOdduySt<x,y>NS}, then $G$ has more than one $\Aut (G_0)$-conjugacy class of involutions.
\end{lemma}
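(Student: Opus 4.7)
The plan is to argue by contradiction. Suppose $G$ has exactly one $\Aut(G_{0})$-conjugacy class of involutions; I will derive a contradiction directly with the hypothesis that $(G,x)$ is a counterexample to Theorem \ref{Th_InAlmostForxp=2,3ExistsOdduySt<x,y>NS}. As will become apparent, the minimality of $(G,x)$ is not actually required for this step.

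The first step is to use Burnside's $p^{a}q^{b}$ theorem. Since $G_{0}$ is a nonabelian finite simple group, $|G_{0}|$ is divisible by at least three distinct primes, so there exists a prime $p\ge 5$ dividing $|G_{0}|$. By Cauchy's theorem, choose an element $y_{0}\in G_{0}$ of order $p$. Since $G_{0}$ is itself an almost simple group (with itself as socle), Theorem \ref{Th_xorderp>3Andinvolution} applied inside $G_{0}$ to $y_{0}$ produces an involution $t\in G_{0}$ such that $\langle y_{0},t\rangle$ is nonsolvable.

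The second step is to transport this pair to one involving the given involution $x$, using the single-class hypothesis. Since $x$ and $t$ are both involutions of $G$, the hypothesis gives some $\phi \in \Aut(G_{0})$ with $x = t^{\phi}$. Set $y := y_{0}^{\phi}$. Because $G_{0}$ is characteristic in $\Aut(G_{0})$, we have $y\in G_{0}\subseteq G$, and $y$ has order $p$, so $y$ is an odd $p$-element of $G$. Moreover
\[
\langle x, y\rangle \;=\; \langle t^{\phi}, y_{0}^{\phi}\rangle \;=\; \langle t, y_{0}\rangle^{\phi},
\]
which is isomorphic to $\langle t, y_{0}\rangle$ and therefore nonsolvable. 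This contradicts the standing assumption on $(G,x)$ that $\langle x,z\rangle$ is solvable for every odd prime $p$ and every $p$-element $z\in G$, and completes the proof.

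I do not anticipate any serious obstacle: the argument is a short combination of Burnside's theorem, Cauchy's theorem, Theorem \ref{Th_xorderp>3Andinvolution}, and the $\Aut(G_{0})$-invariance of $G_{0}$. The only point that requires brief care is checking that the conjugating automorphism $\phi$ keeps $y_{0}$ inside $G$, which is automatic because $y_{0}\in G_{0}$ and $G_{0}$ is characteristic in $\Aut(G_{0})$.
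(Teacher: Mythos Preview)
Your proof is correct and follows essentially the same route as the paper: both pick an element of prime order $p\ge 5$ in $G_{0}$, invoke Theorem~\ref{Th_xorderp>3Andinvolution} to obtain an involution in $G_{0}$ generating a nonsolvable subgroup with it, and then conjugate by an element of $\Aut(G_{0})$ to replace that involution by $x$. Your version merely adds the (implicit) justifications via Burnside and Cauchy for the existence of such a prime and element, and your remark that minimality is not used is accurate.
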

\begin{proof}
  Suppose that $G$ has only one $\Aut(G_0)$-conjugacy class of involutions. Pick $z \in G_0$ of prime order $ p\ge 5$. By Theorem \ref{Th_xorderp>3Andinvolution}, there exists an involution $y \in G_0$ such that $\langle z,y\rangle $ is nonsolvable. But $y^{w}=x$ for some $w \in  \Aut(G_0)$ and so $\langle x, z^{w^{-1}} \rangle \cong \langle y,z \rangle$ is nonsolvable. 
\end{proof}

\begin{proof}[Proof of Theorem \protect\ref{Th_InAlmostForxp=2,3ExistsOdduySt<x,y>NS} for $x$ an involution]
We split the discussion according to the isomorphism type of $G_{0}$.
%\textbf{===============================\\}
% In
%many cases a contradiction to the minimality of $(G,x)$ is obtained by
%finding $(H,x_{1})$, where $H=\langle
%H_{0},x_{1}\rangle $, $H_{0}<G_{0}$ is almost simple, normalized but
%not centralized by $x$, and $x_{1}=x|_{\mathrm{Aut}(H_{0})}$.
%\textbf{===============================\\}

 (A) $G_{0}\cong A_{n}$, $n\geq 5$. Since all odd $p$-elements of $\Aut (A_{n})$ are in $A_{n}$ it is sufficient to prove  the claim for $G=\Aut (A_{n})$. Since $A_{6}\in CVL2$  we have $n\neq 6$, and hence $G=S_{n}$. Considering representatives of  distinct conjugacy classes of involutions in $G$, we may assume that $x=(1,2)$, $(1,2)(3,4)$ or $(1,2)(3,4)(5,6)\sigma $ where $\sigma \in \mathrm{Sym}\{7,\ldots ,n\}$  satisfies $\sigma ^{2}=1$. Let $y=(1,2,3,4,5)\in G$. In all cases, $\langle x,y\rangle $ contains $\mathrm{Alt}\{1,\ldots ,5\}$, which is not solvable, and we have a contradiction.

(B) \emph{Simple groups of Lie type of characteristic $r$}

(B.1) $x \in G_{0}^{\ast} = \Inndiag(G_0)$. We split  the analysis according to whether $x$ is unipotent ($r=2$) or  semisimple ($r>2$).

 (B.1.1) \emph{Unipotent involutions ($r=2$).} We have $x\in G_{0}$ since $| \mathrm{Outdiag}  (G_{0})|$ is not divisible by $r$ \cite[Theorem 2.5.12(c)]{GLS}, and so $G=G_{0}$ by minimality.  Suppose that $|\hat{\Pi}| >1$. By Lemma   \ref{lem:2.2} and the discussion following it, we may assume that $x\in  P\backslash U$ where $P$ is a standard maximal parabolic subgroup of $G_0$ and $U$ is its nontrivial unipotent radical.  Since $x\in P\backslash U$,  the image $\overline{x}$ of $x$ in $P/U\cong L=MH$ is nontrivial, $H$ is the Cartan subgroup and $M$ is a central product of  groups of Lie type of characteristic $2$ corresponding to the Dynkin diagram of $G_0$ with one node deleted (see  \cite[Theorem 2.6.5]{GLS}).  Since $x$ has order $r=2$ we have $\overline{x} \in M$  and one of the groups $M_{0}$ in the central product is normalized but not centralized by $\overline{x}$. Set $A=\langle \overline{x}\rangle M_{0}$. We note that $|  A| <| G_{0}|$ since $A\leq L \lneqq G_{0}$. If  all components of $M$ are nonsolvable, then so is $M_{0}$, hence $A/Z(A)$ is almost simple, and the image $x_{1}$ of $\overline{x}$ in $A/Z(A)$  is nontrivial. Thus $(G,x)$ cannot be a minimal counterexample. 

%subgroup of $G_{0}$. Assume $| \hat{\Pi}| >1$. By Lemma 
%\ref{lem:2.2} and the discussion following it we may assume that $x\in
%P\backslash U$ where $P$ is a maximal parabolic subgroup of $G^{\ast}$ and $
%U$ is its nontrivial unipotent radical.
%\textbf{ WHY are we discussing $G^*$ here?}
% Recall that $P=U(LH^{\ast
%})$ (see \cite[Proposition 2.6.9]{GLS}  - we use its notation) is a
%split extension of $U$, and $LH^{\ast}$ where $L$ is a Levi complement of 
%$U$ in a maximal parabolic subgroup of $G_{0}$. Since $x\in P\backslash U$,
%the image $\overline{x}$ of $x$ in $P/U\cong LH^{\ast}$ is nontrivial.
%Since $x\in G_{0}$ we can assume that $\overline{x}\in L$. By \cite[Theorem 2.6.5]{GLS}, $L=HM$ where $M$ is a central product of groups of Lie type
%of characteristic $2$,  and $H$ acts as diagonal automorphisms on each
%component of $M$. Since $\overline{x}$ is nontrivial, there exists a
%component $M_{0}$ of $M$, normalized but not centralized by $\overline{x}$.
%Set $A=\langle \overline{x}\rangle M_{0}$. Then $|
%A| <| G_{0}|$ since $A\leq L<G_{0}$. If
%all components of $M$ are nonsolvable, then so is $M_{0}$, hence $A/Z(A)$ is almost simple, and the image $x_{1}$ of $\overline{x}$ in it
%is nontrivial. Thus $( A/Z(A) ,x_{1})$ provides a
%contradiction.

 The groups $G_{0}$ that are not eliminated by the last argument satisfy either    (i) $| \hat{\Pi}| =1$, or (ii) for every pair of distinct  maximal parabolic subgroups of $G_{0}$ containing a common Borel subgroup,  at least one of the two parabolics has a Levi complement with a  solvable component. In case (ii), the  solvable component is of type $A_{1}(2)$, ${^{2}}A_{2}(2)$ or $^{2}B_{2}(2)$ (see \cite[Theorem 2.2.7]{GLS}).   It is straightforward, using   \cite[Proposition 2.6.2 and Theorem 2.6.5]{GLS} for example, to obtain the list of $G_0$ satisfying (ii).   The list of groups $G_{0}$ satisfying (i) or (ii) consists of $\PSL_{2}(2^{a})$  ($a\geq 2$), $\PSU_{3}(2^{a})$   ($a\geq 2$), $^{2}B_{2}(2^{a})$ ($a \ge 3$  odd), $\PSL_{3}(2)$, $\PSU_{4}(2)$, $\PSU_{5}(2)$ and $ {^3}D_{4}(2)$. We consider the groups on this list.

 We note that $\PSL_{2}(2^{a})$ and $\PSU_{3}(2^{a})$  have a single class of involutions (transvections) (see \cite[p. 103]{GLS} for example). The same is true for ${^2}B_{2}(2^{a})$ by  \cite[Proposition 8]{2B2}.  So $(G,x)$ cannot be a minimal counterexample in all these cases by Lemma \ref{Lem_SingleAut(G)classOf involutions}. The remaining groups $\PSL_{3}(2)$, $\PSU_{4}(2)$, $\PSU_{5}(2)$, ${^3}D_{4}(2)$ all belong to $CVL2$.

 (B.1.2) \emph{Semisimple involutions.} Suppose that $r>2$, and so $x$ is semisimple and suppose that $|\hat{\Pi} | >1$. Since $O^{r^{\prime}}( C_{G^{\ast}}(x))$ is nontrivial  
%\neq \{ 1_{G^{\ast}}\}$
 (see \cite[Table 4.5.1]{GLS}),
 %column $L^{\ast}=O^{r^{\prime}}(C^{\ast})$ in )
   $x$ belongs to a maximal parabolic subgroup of $G_0^*$ by Lemma \ref{Lem_semisimpleInParabolic}.  By Lemma \ref{lem:2.2}(b), $x$ is $G_0^*$-conjugate to an element $z$ that acts noncentrally on each component of the Levi complement of some  maximal parabolic subgroup $P^*$ of $G_{0}^*$. Without loss, we may assume that $x=z$ and therefore $x$ acts non-centrally on each component of the Levi complement of the maximal parabolic $P^* \cap G$ of $G$.  If one of the components $M_{0}$ is nonsolvable, then we can find $A<G$ and $x_{1}$  such that $( A/Z( A) ,x_{1})$ contradicts the  minimality of $(G,x)$ as in (B.1.1).  The list of groups $G_{0}$ with $|\hat{  \Pi}| >1$ and a maximal parabolic subgroup whose components  are all solvable or with $|\hat{\Pi}| =1$  consists of $\PSL_{2}(q)$ ($q\geq 5$ odd), $\PSU_{3}(q)$ ($q\geq 3$ odd), ${^{2}}G_{2}(3^{a})$, $^{3}D_{4}(3)$, $\PSL_{3}(3)$, $\PSL_{4}(3)$, $\mathrm{P}\Omega _{7}(3)$, $\PSp_{4}( 3)$, $\PSp_{6}(3)$, $\mathrm{P}\Omega _{8}^{+}(3)$, $G_{2}( 3)$, $\PSU_{4}(3)$, $\mathrm{P}\Omega _{8}^{-}(3)$.  
   
 (B.1.2.1) $G_{0}\cong \PSL_{2}(q)$, $q=r^{a}\geq 5$ odd. Recall  that $| G_{0}| =q(q^{2}-1)/2$ and we have $G=\langle  G_{0},x\rangle = \PSL_2(q)$ or $\PGL_2(q)$. By \cite[Table 4.5.1]{GLS}, we have $|C_{G_{0}^{\ast}}(x)| \leq 2(q+1)$ and by   \cite[Theorem 4.2.2(j)]{GLS}, we have $x^{G} =x^{G_0^*}$ and $|x^{G}| \geq | G_{0}^{\ast}|/2(q+1)=q(q-1)/2$. Let $y\in G_{0}$ be a $p$-element where $p$ is an odd prime to be specified, and $M$ a maximal subgroup of $G$ containing $y$. Using the lists of maximal subgroups of $\PSL_2(q)$ and $\PGL_2(q)$  (see \cite[Theorems 2.2 (Dickson) and 3.5]{Giu07}), we have two cases to consider:

 (B.1.2.1.1) $G_{0}\cong \PSL_{2}(r)$ (that is, $a=1$ and $q=r$). For $q=5$ we have $  G_{0}=\PSL_{2}(5)\cong A_{5}$, which has been eliminated in (A). Hence we may assume that $q \geq 7$. Choose $p=r$. Then $M$ must be a Frobenius group of order $q(q-1)/2$ for $G= \PSL_2(r)$ and of order $q(q-1)$ for $G=\PGL_2(r)$, where $\langle y\rangle $ is the normal Frobenius kernel of order $q$. Any involution in $M$ must be the  unique involution in one of the cyclic Frobenius complements, and so $M$ has $q$ involutions. Since $|x^G| \geq q(q-1)/2$, there is a conjugate $x_{1}$ of $x$ that does not normalize $M$. Moreover, let $g\in G$ be such that $M^{g}\neq M$. We claim that $y\notin M^{g}$. Note that $M=N_{G}(\langle y\rangle)$ (otherwise $\langle y\rangle \trianglelefteq G$  hence $\langle y\rangle \trianglelefteq G_{0}$, which is a contradiction since $G_0$ is simple). If $y\in M^{g}$, then $\langle y\rangle $ coincides with the unique normal subgroup of $M^{g}$ of order $q$ and hence $  M^{g}=N_{G}(\langle y\rangle ) =M$, which is a  contradiction. Thus $y^{x_{1}}\notin M$, and $\langle  y,y^{x_{1}}\rangle \leq G_{0}$ is not contained in $M$ or any of its  conjugates. It follows that $\langle y,y^{x_{1}}\rangle =G_{0}$.  This implies that $\langle x_{1},y\rangle $ is nonsolvable, which is a contradiction.

 (B.1.2.1.2) $G_{0}\cong \PSL_{2}(r^a)$, $a\geq 2$. Choose $p$ to be a primitive prime divisor of $r^{2a}-1$ (see Remark \ref{Rem_PrimitivePrimeDivisor}). Then $p\geq 5$, and $p$ divides $q+1$. Hence $M$ is either a dihedral group of order $q+1$ if $  G=\PSL_2(q)$ or order $2(q+1)$ if $G=\PGL_2(q)$ or an $A_{5}$ (if $p=5$). If $M$ is  dihedral, then $y$ belongs to the unique cyclic subgroup $C \le M$ of index $2$, and $M=N_{G}(C)  =N_{G}(\langle y\rangle)$. Therefore, if $M_{1}\neq M$ is also a maximal dihedral group of $G$ of the same order, then  it does not contain $y$. Also, since $| x^G| \geq q(q-1)/2$ and $q\geq 9$ we have $|x^G| \geq  | M|$ and there is a conjugate $x_{1}$ of $x$ such that $\langle y,y^{x_{1}}\rangle$ is not  contained in $M$ or any of its conjugates. Suppose that $\langle  y,y^{x_{1}}\rangle $ is contained in a maximal subgroup of $G_{0}$ that is isomorphic to $A_{5}$. Then either $\langle  y,y^{x_{1}}\rangle =\langle y \rangle $,  contradicting the fact that $\langle y,y^{x_{1}}\rangle \nleqslant M$, or $\langle  y,y^{x_{1}}\rangle \cong A_{5}$, which is nonsolvable. Otherwise $\langle y,y^{x_{1}}\rangle =G_{0}$. Thus, in all cases, $\langle x_{1},y\rangle $ is nonsolvable, which is a contradiction.

 (B.1.2.2) $G_{0}\cong  {^2}G_{2}(3^{a})$, ${^3}D_{4}(3)$ or $\PSU_{3}(q)$ ($q\geq 3$ odd).  Here there is a unique class of involutions (see  \cite[Table 4.5.1]{GLS} for example) and Lemma \ref{Lem_SingleAut(G)classOf involutions} eliminates these cases.

 (B.1.2.3) The remaining possibilities for $G_0$ are $\PSL_{3}(3)$, $\PSL_{4}(3)$, $\mathrm{P}\Omega _{7}(3)$, $\PSp_{4}(3)$, $\PSp_{6}(3)$, $\mathrm{P}\Omega _{8}^{+}(3)$, $G_{2}(3)$, $\PSU_{4}(3)$, or $\mathrm{P}\Omega _{8}^{-}(3)$. These groups are contained in $CVL2$.

\noindent (B.2)  \emph{Field involutions.} \\
  Suppose that $x$ belongs to a $G_{0}^{\ast}$-coset of $\Aut(G_0)$  represented by a field automorphism. By  Lemma \ref{Lem_FieldAutCoset} we may assume that $x$ is a standard field  automorphism of $G_{0}$. Since $x$ is induced by an order $2$ automorphism  of $\mathbb{F}_{r^a}$, $a$ must be even and $r^a\geq 4$. 
  
  Note that $G_{0}$ is not a Suzuki--Ree group, since if it were, then $\mathrm{Out}(G_0)$ would have odd  order. If $G_{0}$ is a Steinberg group $^{d}\Sigma (q)$, then $d$ and $o(x) =2$ must be coprime, thus leaving $G_{0}\cong  {^3}D_{4}(q)$ as the only possibility. Now ${^3}D_{4}(q)$ and all the simple untwisted groups of Lie type, except $  A_{1}(q)$, have a proper subgroup $A\cong \SL_{2}(q)$ generated by root subgroups of $G_0$ (\cite[Theorem 3.2.8]{GLS}). So we may assume that $A$ is normalized by $x$, and $C_{A}(x) \cong \SL_{2}(q^{1/2})$, and in particular, $x$ does not centralize $A$. Hence, by minimality  of $(G,x)$, we have $G_{0}\cong \PSL_{2}(q)$. Let $M=C_{G_{0}}(x)$. By Lemma \ref{Lem_CentralizerOfFieldAutOfPSL_2(q)}, we have $M\cong \PGL_{2}(q^{1/2})$. We can  view $x\in \mathrm{Aut}(G_{0})$ as an automorphism of $\PGL_{2}(q) \cong G_{0}^{\ast}\trianglelefteq \mathrm{Aut}( G_{0})$. Since $| \PGL_{2}(q):M|$ is even and $\PGL_{2}(q))$ has trivial centre, there exists $g\in \PGL_{2}(q)  \backslash M$ such that $x$ normalizes but does not centralize $M^{g}$ by Lemma \ref{Lem_CentralizerCosetArgument}. Hence, the pair $(M^{g},x)$  contradicts the minimality of $(G,x)$ for $q^{1/2}\geq  4$. Otherwise, $G_{0}=\PSL_{2}(4)\cong A_{5}$, or $G_{0}=\PSL_{2}(9)\cong A_{6}$, which are contained in case (A).
  
(B.3) \emph{Graph involutions.}\\
  Suppose that $x$ belongs to a $G_{0}^{\ast}$-coset of $\Aut(G_0)$ represented by a graph automorphism. By our conventions (see    \cite[Definition 2.5.13]{GLS}), there are no graph automorphisms of $F_{4}(2^{a})$, $G_{2}(3^{a})$, $B_{2}(2^{a})$, or of the Suzuki--Ree groups.  This leaves us with $G_{0}\cong \PSL_{n}^{\varepsilon}(q)$ ($  n\geq 3$), $G_{0}\cong E_{6}^{\varepsilon}(q)$, and $ G_{0}\cong D_{n}^{\varepsilon}(q)$ ($n\geq 4$).

 (B.3.1) $G_{0}\cong \PSL_{n}^{\varepsilon}(q)$, where $n\geq 3$.

 (B.3.1.1) $n\geq 5$. We split  the discussion according to the different cases of Lemma \ref{Lem_involutiveGraphAutAnepsilon}.
 
 (i) If $n$ is odd, then both $\iota $ ($\varepsilon =1$) and $\varphi _{q}$ ($\varepsilon =-1$) normalize but do not centralize a type $\PSL_{n-1}^{\varepsilon}(q)$ subgroup, and $(G,x)$ cannot be a minimal counterexample.
 
(ii) If $n$ is even ($n\geq 6$) and $q$ is even, then $\iota $ ($\varepsilon  =1$), $\varphi _{q}$ and $\varphi _{q}x_{0}(1)$ ($\varepsilon  =-1$), normalize but do not centralize a type $\PSL_{n-1}^{\varepsilon}(q)$ subgroup,  while for $x=\iota St$ ($\varepsilon =1$), $x$ normalizes and doesn't centralize a type $\PSL_{n-2}(q)$ subgroup.

 (iii) If $n$ is even ($n\geq 6$) and $q$ is odd, then, for $\varepsilon =1$, $x$ normalizes but does not centralize a type $\PSL_{n-2}(q)$ subgroup in all three cases. For $\varepsilon =-1$, $x = \varphi_q$ acts nontrivially on a type $\PSU_{n-1} (q)$ subgroup in all three cases.

 (B.3.1.2) $G_0 \cong \PSL_{4}^{\varepsilon}(q)$. If $q$ is even, then we can repeat the argument of (B.3.1.1)(ii),  unless $q=2$, $\varepsilon=+$ and  $x=\iota St$. But then $\PSL_4(2) \cong A_8$ and we have eliminated this case already. If $q$ is odd, then the discussion of  (B.3.1.1)(iii) applies for $\varepsilon =-1$. For $\varepsilon =1$, $\iota S^{+}$ and  $\iota S^{-}$  normalize the subgroup $\{ \diag[A,1,1] \mid A \in \GL_2(q) \} \le \GL_4(q)$. Moreover, $\iota S^{+}$ and $\iota S^{-}$  restrict to $\iota S_{0}$ where 
\[S_{0}=\left( 
\begin{array}{cc}
0 & 1 \\ 
1 & 0
\end{array}
\right).\]
  A straightforward calculation shows that $\iota S_{0}$ coincides  with an involution in $\PGL_{2}(q)$, hence $(\PSL_{2}(q),\iota S_{0})$ contradicts the minimality of $  (G,x)$ unless $q=3$, in which case $\PSL_{4}(3) \in CVL2$. In summary, if $n=4$, then it remains to check the claim for $G_{0}=\PSL_{4}(q)$, $q$ odd and $x=\iota S$.

 (B.3.1.2.1) $G_0 \cong \PSL_{4}(q)$, $q \equiv  1 \pmod{4}$, $x = \iota S$. First we determine $C_{\PGL_4(q)}(\iota S)$. Let $A\in  \PGL_{4}(q)$. Then $A \in C_{\PGL_4(q)}(\iota S)$  if and only if $A^{\iota S}=cA$  for some scalar $c$. One checks that this is equivalent to $  AS^{-1}A^{T}=c^{-1}S^{-1}$, and since $S^{-1}$ is the matrix of an alternating form, we have $A\in C_{\PGL_4(q)}(\iota S)$ if and only if $A\in \PGSp_{4}(q)$,  hence $C_{\PGL_4(q)}(\iota S) \cong \PGSp_{4}(q)$. Now consider $C_{G_{0}}(\iota S)=G_{0}\cap C_{\PGL_4(q)}(\iota S)$.  If  $C_{G_{0}}(\iota S)= C_{\PGL_4(q)}(\iota S)$, then $\PSL_4(q)$ contains $\PGSp_4(q)$, which is inconsistent with our assumption that $q \equiv 1 \pmod{4}$ (see \cite{kthesis} for example). Moreover, the fact that $q\equiv 1 \pmod{4}$ implies that $G_0$ has index $4$ in $\PGL_4(q)$. Let $K= C_{\PGL_4(q)}(\iota S) \cong \PGSp_4(q)$. Using the second isomorphism theorem, we have $| G_{0}K:G_{0}| =| K:K\cap G_{0}| \in  \{ 2,4\}$. Since $K$ is almost simple with socle $\PSp_{4}(q)$ of  index $2$, $| K:K\cap G_{0}| =2$, which implies that $ C_{G_{0}}(\iota S)=\PSp_{4}(q)$. Since $\PSp_{4}(q)$ has trivial centre and $|\PSL_{4}(q) :\PSp_{4}(q)|$ is even, we obtain a contradiction by Lemma \ref{Lem_CentralizerCosetArgument}.

 (B.3.1.2.2) $G_0 \cong \PSL_{4}(q)$, $q \equiv 3 \pmod{4}$, $x = \iota S$. Note that $q=r^{f}$ is not  a square, and hence $f$ is odd. Let $u$ be a primitive prime divisor of $ r^{3f}-1$. By Remark \ref{Rem_PrimitivePrimeDivisor} and \cite[Lemma 2.1]{GM}, either we can take $u \ge 3e+1$ (where $e=3f$), or $u=2e+1$ and $u^2$ divides $q^{3}-1$ (note that since $f$ is odd, $e+1$ is even and therefore $e+1$ cannot be prime).  
 % which satisfies $u\neq e+1$ and $u\neq 2e+1$ where $e=3f$ (see
%Remark \ref{Rem_PrimitivePrimeDivisor}).
  Let $y \in G_0$ of order $u$ or $u^2$ in each case respectively. We will show that $\langle x,y\rangle =\langle  G_{0},x\rangle $. Suppose to the contrary that $\langle  x,y\rangle \leq M<\langle G_{0},x\rangle $ for some maximal subgroup $M$ of $\langle G_{0},x\rangle$. Let $  M_{0}=M\cap G_{0}$. First suppose that $M_{0}$ is reducible and hence (since graph automorphisms are present) of type $P_2$, $P_{1,3}$ or $\GL_1(q) \oplus \GL_3(q)$. But the order of $y$ does not divide $|P_2|$ or $|P_{1,3}|$ and if $x$ normalizes a subgroup of type $\GL_1(q) \oplus \GL_3(q)$, then we can reduce to the case $G_0 = \PSL_3(q)$, which contradicts minimality.  So we may assume that $M_{0}$ is irreducible and now \cite[Theorem 2.2]{GM} shows that there are no such maximal subgroups containing both $x$ and $y$. It now follows that $\langle x,y \rangle = \langle x,G_0 \rangle$ as claimed.
% namely, its preimage in $\GL_{4}(q)$
%stabilizes a proper nontrivial subspace $W$ of the $4$-dimensional vector
%space $V$ on which $\GL_{4}(q)$ acts naturally.
% By \cite[Table 4.1A]{KL}, $M_{0}$ is either of type $P_{m}$ with $m\leq 2$, or of type $
%P_{1,3}$ or of type $\GL_{1}(q)\oplus \GL_{3}(q)$. The first possibility is
%ruled out since $P_{m}$ with $m\leq 2$ is contained in $\GL_{2}(q)$ subgroup, while $u$ is a primitive prime divisor of $q^{3}-1$ and 
%$y\in M_{0}$. The other types are ruled out since $\iota S$ interchanges $m$-dimensional subspaces and $(4-m)$-dimensional subspaces (see 
%\cite[remarks following Proposition 4.0.2]{KL}) and hence does not normalize
%these types of subgroups.

%So we may assume that $M_{0}$ is irreducible. In this case we use the classification given in 
%\cite{GPPS}. Note that $(p,a)$ of \cite{GPPS} is our $(r,f)$. We have to rule out the possibility that the preimage of $
%M_{0}$ in $\GL_{4}(q)$, which we continue to denote $M_{0}$, is
%one of the groups in examples 2.1 or 2.3-2.9. This is done case by case
%using our choice of $u$ above and its properties. We omit the details.

 (B.3.1.3) $G_0 \cong \PSL_{3}^{\varepsilon}(q)$. By Lemma \ref{Lem_involutiveGraphAutAnepsilon} we may assume $x = \iota$ when $\varepsilon=+$ and $x= \varphi_q$ when $\varepsilon=-$. First suppose that $\varepsilon =+$. We claim that $C_{G_{0}}(\iota ) =\PSO_{3}(q) (\cong \SO_3(q))$. To see this, let $g\in  C_{G_{0}}(\iota)$ and  write $g=AZ( \SL_{3}(q))$ for some $A\in \SL_{3}(q)$. Now since $\iota (g) =g$, we have $\iota (A) =\lambda A$ for some $\lambda  \in \mathbb{F}_{q}$ such that $\lambda ^{3}=1$. But $\lambda ^{3}=1$ implies  that $\lambda =(\lambda ^{-1}) ^{2}$ and so for $B=\lambda  ^{-1}A\in \SL_{3}(q)$ it follows that $g= BZ( \SL_{3}(q))$ and $\iota (B) =B$. That is, $B \in \SO_3(q)$  and  since $Z(\SL_3(q)) \cap \SO_3(q) = \{1\}$, we have $C_{\PSL_3(q)}(\iota) = \SO_3(q)$. Now suppose that  $ \varepsilon = -$. Let $g\in C_{G_{0}}(\varphi _{q})$. Then $g=AZ(\SU_{3}(q) )$ where $A\in \SU_{3}(q)$ (in particular we may assume that $\varphi_q(A) = \iota (A)$).  Since $\varphi_{q}(g) =g$, we have $\varphi_{q}(A) =\lambda A$ for some $\lambda \in \mathbb{F}_{q^{2}}$ such that $\lambda ^{3}=1$. As before, $\lambda ^{3}=1$ implies that $\lambda $ has a  square root so we can choose $A$ such that $\varphi _{q}(A) =A$.  But $\varphi_q(A) = \iota(A)$, hence $A= \iota (A)$ and  $A \in \SO_3(q)$. Finally, $\SU_{3}(q)$ contains  $\SO_{3}(q)$ as a subfield subgroup, so since $Z(\SU_3(q)) \cap \SO_3(q) = \{1\}$, we have $C_{G_{0}}(\varphi  _{q}) =\SO_{3}(q)$. Since $| G_{0}:\SO_{3}(q) |$ is even for all $q$ and $\varepsilon $, and since $  Z( \SO_{3}(q) ) =1$, it follows from Lemma \ref{Lem_CentralizerCosetArgument} that $x$ normalizes but does not centralize a  subgroup $M\cong \SO_{3}(q)$, which is almost simple for $q\geq  4$. So we may assume that $q\le 3$, in which case $G_{0}$ is isomorphic to $\PSL_{3}(2)$, $\PSL_{3}(3)$, or $\PSU_{3}(3)$ ($\PSU_{3}(2)$ is solvable). But all of these groups are contained in $CVL2$.

 (B.3.2) $G_{0}\cong E_{6}^{\varepsilon}(q)$. By Lemma \ref{Lem_involutiveGraphAutE_6}, there are two $G_{0}^{\ast}$-classes of graph  involutions in all subcases. These classes have representatives of the form $z$ and $  zt$, where $z$ is a standard graph automorphism ($z=\gamma $ for $\varepsilon =1$ and $z=\varphi _{q}$ for $\varepsilon =-1$) and $t$ is an  inner automorphism such that $zt=tz$ (see \cite[(19.7)]{AS} for $q$ even and \cite[Table 4.5.1 and p. 157]{GLS} for $q$ odd).

 We have $C_{G_{0}}(z)\cong F_{4}(q)$  for both values of $\varepsilon$ (see \cite[Table 4.5.1]{GLS}  for $q$ odd  and \cite[(19.9)(iii)]{AS} for $q$ even). If $x=zt$, then, since $t$ and $z$  commute, we have $t\in C_{G_{0}}(z)$, and $x$ normalizes $C_{G_{0}}(z)$. If $x$ centralizes $C_{G_{0}}(z)$, then $t\in  Z(C_{G_{0}}(z)) =Z(F_4(q))$, which is a  contradiction since $F_4(q)$ has trivial centre. If $x=z$, then it is easily verified that $|E_{6}(q)| /| F_{4}(q)|$ and $|{^2}E_{6}(q) |/| F_{4}(q)|$ are  even and so, by Lemma \ref{Lem_CentralizerCosetArgument}, there exists a  conjugate of $F_4(q)$ that is normalized but not centralized by $x$.  Hence $(G,x)$ cannot be a minimal counterexample. 
 
 (B.3.3) $G_{0}\cong \mathrm{P}\Omega _{d}^{\pm}(q)$, $d\geq 8$. Let $(V,Q)$ be the $d$-dimensional orthogonal space on which $\GO_{d}^{\pm}(q)$ acts naturally ($Q$ is a  nondegenerate quadratic form on $V$). Then $x\in \PCO_{d}^{\pm}(q)$ ($\PGO_{d}^{\pm}(q)$ in the notation of \cite[p. 70--72]{GLS}). By \cite[Proposition  1.4(a)]{monodromy}, $x$ stabilizes a nontrivial orthogonal decomposition $  V=W\perp W^{\prime}$, since $d>4$. Moreover, suppose that $b=\dim W^{\prime}\geq 1$ is minimal. Again by  \cite[Proposition 1.4(a)]{monodromy}, if $b>4$, then $x$ restricted to $W^{\prime}$ must stabilize a nontrivial orthogonal  decomposition $W^{\prime}=W^{\prime \prime}\perp W^{\prime \prime \prime}$  and hence $V=(W+W^{\prime \prime}) \perp W^{\prime \prime  \prime}$ is an orthogonal decomposition stabilized by $x$ and contradicts  the minimality of $b$. Hence $b\leq 4$. Now we prove that among the $x$-invariant  nontrivial orthogonal decompositions $V=W\perp W^{\prime}$ with minimal $  b=\dim W^{\prime}$, there exists at least one for which $x$ acts nontrivially on $W$.  Suppose not. Then $x$ trivially  stabilizes an orthogonal decomposition $W=W_{d-b-1}\perp W_{1}$ where $\dim  W_{i}=i$. Therefore $x$ stabilizes the orthogonal decomposition $  V=(W_{d-b-1}\perp W^{\prime})\perp W_{1}$ and $b=1$. Moreover, since $x$  acts nontrivially on $V$, it acts nontrivially on $W_{d-b-1}\perp  W^{\prime}$, which is a contradiction. Now  $\dim W = d-b$ and since $1\leq b\leq 4$ we have $4\leq d-4\leq d-b\leq d-1$. Thus we can reduce to  the case $\mathrm{P}\Omega _{d-b}^{\pm}(q)$, which is simple unless $d-b=4$ (and $d=8$) and $q=2$ or $3$. But in this case,  $G_0 \cong \mathrm{P}\Omega _{8}^{\pm}(2)$, or $\mathrm{P}\Omega _{8}^{\pm}(3)$, which  are contained in $CVL2$.

(B.4) \emph{Graph-field involutions.} \\
 Suppose that $x$ belongs to a $G_{0}^{\ast}$-coset of $\Aut   (G_0)$ represented by a graph-field  automorphism. In particular, $G_{0}$ is untwisted. By Lemma \ref{Lem_FieldAutCoset} we  may assume that $x$ is a standard graph-field automorphism of $G_{0}$ (that  is, $x\in \Phi _{G_{0}}\Gamma _{G_{0}}-\Phi _{G_{0}}-\Gamma _{G_{0}}$ unless $G_{0} \cong B_{2}(2^a)$, $  F_{4}(2^a)$, or $G_{2}(3^a)$ in which case $x\in \Phi _{G_{0}}\Gamma _{G_{0}}-\Phi _{G_{0}}$).

 (B.4.1) $G_{0}$ is untwisted and  $G_0 \not\cong B_{2}(2^a)$, $  F_{4}(2^a)$, or $G_{2}(3^a)$. By \cite[Theorem  2.5.12(e)]{GLS}, $x=x_{\Phi}x_{\Gamma}$ where $x_{\Phi}\in \Phi  _{G_{0}}$ and $x_{\Gamma}\in \Gamma _{G_{0}}$ are commuting involutions. In  particular, $q$ is a square and we let $q_{0}=q^{1/2}$. Since $\Gamma _{G_{0}}\neq 1$, $G_{0}$ is of type $A_{n}$ ($n\geq 2$), $D_{n}$ ($n\geq 4$), or $E_{6}$. In each case, $x$ acts noncentrally on a  subgroup of type $A_n(q_0)$, $D_n(q_0)$ or $E_6(q_0)$ respectively and thus $(G,x)$ cannot be a minimal counterexample. 

%If it is of type $A_{n}$, then we choose $
%x_{\Gamma} =\iota $ and $x_{\Phi}=\varphi _{q_{0}}$. Therefore $x$ we can
%take $\SL_{n+1}(q_{0})$. If $G_{0}$ is of type $D_{n}$, then since $x_{\Gamma}$ is induced from the Dynkin diagram isometry
%that interchanges $\alpha _{n-1}$ and $\alpha _{n}$, we take $H_{0}\cong
%D_{n-1}(q)$, obtained by truncating $\alpha _{1}$. \textbf{(we define $H_0$ as an almost simple subgroup of $G_0$ but here $D_{n-1}(q)$ can have a centre --it may not be the adjoint version)} For $
%E_{6}(q)$, the isometry of the Dynkin diagram of $E_{6}$ fixes $
%\alpha _{4}$, so we take $H_{0}\cong A_{5}(q)$ \textbf{(same here, this is $\SL_6(q)$, not $\PSL_6(q)$)}.

 (B.4.2) $G_{0} \cong B_{2}(2^{a})$ ($a\geq 2$), $F_{4}(2^{a})$ or $G_{2}(3^{a})$. By  \cite[Theorem  2.5.12(e)]{GLS}, $\Phi _{G_{0}}\Gamma _{G_{0}}$ is cyclic and $  | \Phi _{G_{0}}\Gamma _{G_{0}}:\Phi _{G_{0}}| =2$. Thus $ x^{2}=\varphi _{r}$. But $x^{2}=1$ so $a=1$, and $F_{4}(2)$, $G_{2}(3) \in CVL2$.

(C) \emph{Sporadic groups.}

 Note that $| \mathrm{Out}(G_0) | \in  \{ 1,2\}$ so either $G=G_{0}$ or $G=G_0:2$. We obtain a contradiction in one of the following ways.

(C.1) By applying Lemma \ref{Lem_SingleAut(G)classOf involutions} when there is a single $G$-class of involutions. 

 (C.2) Using \cite{Atlas}, we look for a prime divisor $p$ of $  | G|$ such that for every maximal subgroup $M$,  if $p$ divides $| M|$, then $M$ is almost simple or of odd  order. If such a  prime exists, then we let $y\in G$ be a $p$-element. Then either $\langle x,y\rangle =G$ is nonsolvable or $\langle x,y\rangle \le M$ for an almost simple subgroup $M$ in which case $(G,x)$ is not a minimal counterexample.

(C.3) For the remaining cases we use \textsc{Magma}.
\end{proof}

\subsection{Proof of Theorem \protect\ref{Th_3plus2}}

 In this section, we prove that if $G$ is  almost simple and $x \in G$  has order $3$, then there exists a $2$-element $y \in G$ such that $\langle x,y \rangle$ is not solvable. 
\begin{definition}
 The list $CVL3$ consists of the following groups: $\PSU_{3}(3)$, $\PSL_{3}(3)$, $\PSp_{4}(3)$, $G_{2}(3)$, $\PSU_{4}(3)$, $^{3}D_{4}(3)$, ${^{3}}D_{4}(2)$, $\PSL_{4}(2)$, $\PSU_{6}(2)$, $\PSU_{4}(2)$, $\PSL_{3}(4)$, $\PSU_{3}(4)$, $\PSL_{2}(8)$, $\PSL_{2}(27)$, ${^{2}}B_{2}(8)$, $  D_{4}(2)$. For each $G_0\in CVL3$ we used \textsc{Magma} to verify that if $x\in \Aut (G_0)$ has order $3$, then there exists a $2$-element $y\in G_0$ such that $\langle  x,y\rangle $ is not solvable. 
\end{definition}

\begin{proof}[Proof of Theorem \protect\ref{Th_3plus2}]
 Let $(G,x)$ be a minimal counterexample to the claim of Theorem \ref{Th_3plus2}. We split the discussion according to the possibilities for $ G_{0}$.

(A) \emph{Alternating groups.} \\
 Suppose that $G_{0}\cong A_{n}$ and $n\geq 5$. Since $| G:G_{0}|$  is not divisible by $3$ we may assume that $G=G_{0}$. We may assume $x=(1,2,3)$ or $x=(1,2,3) (4,5,6) x_{1}$ where $  x_{1}\in \mathrm{Sym}\{ 7,\ldots,n\}$ and $x_{1}^{3}=1$. For $x=(1,2,3)$ let $y=(1,4) (3,5)$, and for $  x=(1,2,3) (4,5,6) x_{1}$ let $y=(1,2)(3,4)$. Then it is straightforward to check that $\langle x,y\rangle $ is nonsolvable and we have a contradiction.

(B) \emph{Simple groups of Lie type of characteristic $r$.}

(B.1)  $x \in G_{0}^{\ast} = \Inndiag(G_0)$.

 (B.1.1) $G_{0}\cong \PSL_{2}(q)$, $q\geq 4$. Here $G_{0}^{\ast}\cong  \PGL_{2}(q)$, and since $| \PGL_{2}(q):\PSL_{2}(q)|=(2,q-1)$, we have $x\in \PSL_{2}(q)$. There is  only one $\PGL_{2}(q)$-class of order $3$ elements in $\PSL_{2}(q)$, and since $\PSL_{2}(q)$ is generated by an order $3$ element and an involution unless $  q=9$ (\cite[Theorem 6]{Mac}), we may assume that $q=9$. However, $\PSL_{2}(9)\cong A_{6}$ is excluded by case (A). 
 
 (B.1.2) \emph{Unipotent elements ($r=3$).}  Since $|   \mathrm{Outdiag}(G_{0})|$ is not divisible by $r$, we may assume that $  G=G_{0}$. 
 %Also, every parabolic subgroup of $G_{0}$ contains a conjugate  of $x$ by Sylow's theorem. 
 Applying the same argument used in Section \ref{Sect_ProofFirstTho(x)=2}  %the $o(x) =2$ part of the proof of Theorem \ref
%{Th_InAlmostForxp=2,3ExistsOdduySt<x,y>NS}, 
 case  (B.1.1), most possibilities for $G_{0}$ are ruled out by choosing appropriate  pairs of parabolic subgroups. We are left with $\PSL_{2}(3^{a})$ ($a\geq 2$), ${^{2}}G_{2}(3^{a})$ ($a\geq 3$ odd), $\PSU_{3}(3^{a})$ ($a\geq 1$), $\PSL_{3}(3)$, $\PSp_{4}(3)$, $G_{2}(3)$, $\PSU_{4}(3)$, or $^{3}D_{4}(3)$. The case $G_{0}\cong \PSL_{2}(3^{a})$ is eliminated in (B.1.1). We now consider the remaining groups.

 (B.1.2.1) $G_{0}\cong {^{2}}G_{2}(3^{a})$, ($a\geq 3$ odd). By \cite[p. 87]{ward}, $G_{0}$ has three conjugacy classes of  elements of order $3$ with representatives $X$, $T$ and $T^{-1}$. We have  $|C_{G_{0}}(T) | =2q^{2}$ (see \cite[III-2 p.78]{ward}), so both $T$ and $T^{-1}$ centralize an involution. There is a single  class of involutions \cite[p. 63]{ward} with centralizers isomorphic to  $2\times \PSL_{2}(q)$ \cite[property III, p.62]{ward}. Thus  $T$ and $T^{-1}$ belong to a $\PSL_{2}(q)$ subgroup contradicting the minimality of our counterexample. The centralizer of $X$ is a Sylow $3$-subgroup of order $3^{3a}$ \cite[p. 78 (3)]{ward}. By \cite[Theorem 3.3.1(c)]{GLS}, the centre of the  Sylow $3$-subgroup $U = \langle X_{\alpha}  \mid \alpha \in \Pi \rangle$ of $G_{2}(3^{a})$ (untwisted) is $Z(U)  =X_{-\alpha _{\ast}}X_{\alpha _{s}}$, where $-\alpha _{\ast}=2\alpha  _{1}+3\alpha _{2}$, and $\alpha _{s}=2\alpha _{1}+\alpha _{2}$ is the  highest short root of $G_{2}$. The graph automorphism of $G_{2}(3^{a})$ normalizes $U$ and interchanges the two root subgroups $  X_{-\alpha _{\ast}}$ and $X_{\alpha _{s}}$. Moreover, it fixes $X_{1}=   x_{2\alpha _{1}+3\alpha _{2}}(1)x_{2\alpha _{1}+\alpha _{2}}(1)$. Thus $  X_{1}\in {^{2}}G_{2}(3^{a})$, and in fact $X_{1}$ belongs to the centre of  the ${^{2}}G_{2}(3^{a})$ Sylow $3$-subgroup contained in $U$. By \cite[p.78, (3)]{ward}, $X_{1}$ has order $3$. Thus we may assume that $X=X_{1}$ and now it is clear that $X$ belongs to a $^{2}G_{2}(3)\cong \PSL_{2}(8):3$ subgroup of $G_{0}$, contradicting the minimality of our counterexample. 
 
 (B.1.2.2) $G_{0}\cong \PSU_{3}(3^{a})$ ($a\geq 1$). Let $V$ be the natural module for  $\GU_{3}(3^{a})$. Since $x\in G$ is unipotent, its Jordan  normal form is either $J_{2} +  J_{1}$ or $J_{3}$. In the first case $x$ is a transvection, and acts as a non-scalar on a two-dimensional  nondegenerate subspace $W$ of $V$. Therefore we can reduce to the case $G_0 \cong \PSL_2(3^a)$ when $a \ge 2$. Now suppose that $x$ has Jordan normal form $J_{3}$. By  \cite[Proposition 4.5.5(II)]{KL}, $\PSU_{3}(3^{a})$ has an $\SO_{3}(3^{a})$ subgroup and $\SO_{3}(3^{a})$ contains an element $x_{1}$ with a single $J_{3}$ block (by \cite[Section 3.3]{Bur2} for example). Now $x_{1}$ is  $\GU_{3}(q)$-conjugate  to $x$ since  two elements in $\GU_{3}(q)$ are  conjugate if and only they have the same Jordan form. Thus $(G,x)$ cannot be a minimal counterexample for $a\geq 2$. If $a=1$, then we have $\PSU_{3}(3)\in CVL3$. 
 
 (B.1.2.3) The remaining groups $G_0$ satisfy $G_{0}\cong \PSL_{3}(3)$, $\PSp_{4}(3)$, $G_{2}(3)$, $\PSU_{4}(3)$, ${^3}D_{4}(3)$ and  are contained in  $CVL3$.

  (B.1.3) \emph{Semisimple elements ($r\neq 3$) $G_{0}$ exceptional  or a  group of type $D_{4}$}. 
%  Suppose that $x\in G_{0}^{\ast}$. Then $x$ is semisimple since we are assuming that $r\ne 3$. 
  By Lemma \ref{Lem_semisimpleInParabolic} and the fact that $O^{r^{\prime}}( C_{G^{\ast}}(x) )$ is nontrivial (see \cite[Table 4.7.3.A]{GLS}), $x$ belongs to a  maximal parabolic subgroup of $G$. Therefore we can apply Lemma \ref{lem:2.2}(b), and arguing as in Case (B.1.2) of Section \ref{Sect_ProofFirstTho(x)=2},  
%the  proof of Theorem \ref
%{Th_InAlmostForxp=2,3ExistsOdduySt<x,y>NS}, $o(x) =2$ part,
 we eliminate all possible $G_{0}$ except ${^{2}}F_{4}(2)^{\prime}$, %(from $G\cong {^{2}}F_{4}(2)$),
  $G_{2}(2)' \cong \PSU_{3}(3)$ and ${^{3}}D_{4}(2)$. Now ${^{2}}F_{4}(2)^{\prime}$ has only one class of elements of order $3$ and has standard generators $a,b$ of order $2$ and $3$ respectively. Hence $x$ is conjugate to $b$ and ${^2}F_{4}(2)^{\prime}$  cannot be a minimal counterexample. The remaining groups $G_{2}(2)' \cong \PSU_{3}(3)$ and ${^{3}}D_{4}(2)$  are contained in $CVL3$.

(B.1.4) \emph{Semisimple elements ($r\neq 3$), $G_0$ classical}. 
 %Suppose that $G_{0}$  is a simple classical group. % and $x\in G_{0}^{\ast}$.
  By  \cite[Lemma 3.11]{Bur2}, %($\overline{G}_{\sigma}$ in the lemma is our $G_{0}^{\ast}$, and $G_{0}$ is our $G_{0}$) 
 $x$ lifts to an element of $\widehat{G}$, which  we continue to call $x$, where $\widehat{G}\cong  \GL_{n}^{\varepsilon}(q)$ or $\widehat{G}\cong \Sp_{n}(q)$ or $\widehat{G}\cong \Omega _{n}^{\varepsilon}(q)$, and $o(x) =3$ unless $G_{0}\cong \PSL_{n}^{\varepsilon}(q)$, and $3|(q-\varepsilon,n)$, in which case we may assume that $o(x) =3^{k}$. Let $V$ be the natural $\widehat{G}$-module of dimension $n$. Since $x$ is semisimple, $V$ decomposes as a sum of $x$-modules
\begin{equation} \label{eqn:decomp}
   V= U_1 \perp \cdots \perp U_k \perp (W_{k+1}\oplus W_{k+1}') \perp \cdots \perp (W_{k+s} \oplus W_{k+s}'),
\end{equation}
where $x$ acts irreducibly on each $U_i$, $W_i$ and $W_i'$, the $W_i$ and $W_i'$ are totally singular, and  the $U_i$ and $(W_i \oplus W_i' )$ are nondegnerate (except in the linear case of course). 
% a direct sum of indecomposable nondegenerate $
%\langle x\rangle $ modules \textbf{The nondegenerate modules may be of the form $W \oplus W'$ for $W$ totally singular??? In any case, they are not nondegnerate either in the case of PSL}.
  If $x$ lifts to an element of order $3$ in $\widehat{G}$, then the dimension of each of these modules is at most $2$ since $3|q^2-1$ (and therefore $\GL_m(q^k)$ does not contain elements of order $3$ acting irreducibly when $ m\ge 3$). In the rest of this section,  $X$ will stand for one of the irreducible subspaces $U_i$, $W_i$ or $W_i'$ in \eqref{eqn:decomp}. We note that if $x$ acts trivially on $X$, then $\dim X = 1$ since we are assuming that $X$ is an irreducible $x$-module.

%If $G_{0}\cong
%\PSL_{n}^{\varepsilon}(q)$ and $3|(q-\varepsilon,n)$, then, by the proof  of \cite[Lemma
%3.11]{Bur2}, $V$ is a direct sum of $3$-dimensional $\langle x\rangle $-submodules, on which $x^{3}$ acts as a scalar. Let $X$
%denote a general irreducible $x$-submodule of $V$. It follows that $\dim X \leq 3$.

 (B.1.4.1) $G_{0}\cong \PSL_{n}(q)$.  If $x$ lifts to an element of order $3$ in $\widehat{G}$ and $q\geq 4$, then minimality of $(G,x)$  implies $n=2$ and we are done by case (B.1.1). If $q \le 3$, then $q=2$ since $r\neq 3$. In this case, since $\PSL_{2}(2)$ is  solvable, we have the following possibilities: If there exists $X$ in \eqref{eqn:decomp} of  dimension $1$, then, by minimality, $G_{0}\cong \PSL_{3}(2)\cong  \PSL_{2}(7)$ (which is eliminated in (B.1.1)), and if   every $X$ in \eqref{eqn:decomp} is $2$-dimensional, then minimality implies that $G_{0}\cong \PSL_{4}(2) \cong A_8$.  If $x$ does not lift to an element of order $3$ in $\widehat{G}$, then   all of the $X$ in \eqref{eqn:decomp} are $3$-dimensional (see the proof  of \cite[Lemma 3.11]{Bur2} for example)  and it follows  that $n=3$ by minimality of $(G,x)$. Thus it remains to check $G_0=\PSL_{3}(q)$ where $(3,q-1)=3$, $x$ has order $3^k$, $x$ acts irreducibly and  and $x^3$ acts as a scalar (see (B.1.4.5) below).

(B.1.4.2) $G_{0}\cong \PSU_{n}(q)$.

(B.1.4.2.1) If $x$ does not lift to an element of order $3$, then, as in (B.1.4.1), all of the modules in \eqref{eqn:decomp} are $3$-dimensional and it follows by minimality that either $n=3$, $q \ge 4$, $(3,q+1)=3$ and $x$ acts irreducibly (see (B.1.4.5) below) or $q=2$ and $G_0 = \PSU_6(2) \in CVL3$ ($\PSU_3(2)$ is solvable). 
%B.1.4.2.1. Suppose $\dim X =3$. If $q\geq 4$, then by
%minimality of $(G,x) $, $n=3$, and it remains to check $
%\PSU_{3}(q)$ for $q\geq 4$ (see B.1.4.4. below). If $q=2$, then $n>3$ since $
%\PSU_{3}(2)$ is solvable. If $q=2$, then it suffices to check the cases $G_0\cong \PSU_n(2)$ for $n=4,5,6$.

%and all $X$ are of dimension $
%3$, then, by minimality, $G_{0}\cong \PSU_{6}(2)$. If, on the other hand, $
%\dim X \in \{ 2,3\}$ and both values are realized,
%then $G_{0}\cong \PSU_{5}(2)$. Finally, if $\dim X  \in\{
%1,3 \}$ and both values are realized, then $G_{0}\cong \PSU_{4}(2)$.

 (B.1.4.2.2)  If $x$ lifts to an element of order $3$, then we have $\dim X \le 2$ for all $X$ in \eqref{eqn:decomp}. If $  q\geq 4$, then minimality implies that $n=2$ and we are done by case (B.1.1) since $\PSU_{2}(q)\cong \PSL_{2}(q)$. If $q=2$, then it suffice to check $G_{0}\cong \PSU_{4}(2)$, which is contained in $CVL3$.

(B.1.4.3) $G_{0} \cong \PSp_n(q)$ ($n \ge 4$). Since $x$ lifts to an element of order $3$  we have $\dim X \le 2$ for all $X$  in \eqref{eqn:decomp}. 
% If $\dim
%(X) =3$, then $G_{0}\cong  \mathrm{P}\Omega _{3}(q)$ with $
%q\geq 5$ ($q$ must be odd and $r\neq 3$) \textbf{CHECK}. Since $\mathrm{P}\Omega _{3}(q)
%\cong \PSL_{2}(q)$ we are done by case B.1.1.
%Suppose $\dim
 %X =2$ for all possible $X$.
First suppose that there is a $2$-dimensional $X$ in \eqref{eqn:decomp}  (on which $x$ necessarily acts nontrivially). 
If $X$ is totally singular and $q \ge 4$, then we can reduce to the case of $\PSL_2(q)$ by restricting to $X$.  If $X$ is totally singular and $q=2$, then we can reduce to the case of $\PSp_4(2)$ by restriction to $X \oplus X'$.
Thus we may now assume that all of the $2$-dimensional spaces in \eqref{eqn:decomp} are nondegenerate. If $q \ge 4$, then we can reduce to the case $\PSL_2(q)$ by restricting to $X$. If $q=2$, then we can reduce to the case of $\PSp_4(2)$ by restricting to $X \perp Y$, where $Y$ is a nondegenerate subspace in \eqref{eqn:decomp} (necessarily of dimension $2$) or is a sum $W_i \oplus W_i'$ of totally singular $1$-spaces in \eqref{eqn:decomp}.

 Finally,  it remains to consider the case where all of the $X$ are $1$-dimensional (and hence totally singular).  Choose $X$ on which $x$ acts nontrivially. Again we can reduce to the case of $\PSL_2(q)$ by restriction to $X \oplus X'$ when $q \ge 4$ (note that since $X \oplus X'$ is nondegenerate,  $x$ does not act as a scalar on $X \oplus X'$ since $Z(\Sp_2(q)) = \langle \pm I_2 \rangle$). 
  If $q=2$, then we reduce to the case of  $\PSp_4(2) \cong S_6$ by restricting to $(X \oplus X' ) \perp (Y \oplus Y')$ where $Y$ is another totally singular $1$-space in \eqref{eqn:decomp}.  Thus in all cases, we have shown that if $G_0 = \PSp_n(q)$ is  a minimal counterexample, then $(n,q)= (4,2)$ and $\PSp_n(q) \cong S_6$; but this case is  eliminated in (A).

%  we have by minimality $G_{0}\cong \mathrm{P}\Omega _{6}^{+}(2) \cong \PSL_{4}(2)$ or $\mathrm{P}\Omega _{4}^{-}(2)
%\cong \PSL_{2}(4)$ for the orthogonal groups. These cases are
%already dealt with. For the symplectic groups $G_{0}\cong \PSp_{4}(2)\cong
%A_{6}$ which is dealt with in case A.

%To summarize the discussion in B.1.4.1---B.1.4.3: If $r\neq 3$, $x$ is a
%semisimple element in $G_{0}^{\ast}$ and $G_{0}$ is a classical group, then $
%(G,x)$ cannot be a minimal counterexample unless $
%G_{0}=\PSL_{4}(2),\PSU_{4}(2),\PSU_{5}(2),\PSU_{6}(2)$, which are all in $CVL3$,
%or $G_{0}$ is isomorphic to either $\PSL_{3}(q)$ or $\PSU_{3}(q)$ where $q\geq
%4$ and $r\neq 3$. We now consider this last case.

(B.1.4.4) $G_0 \cong \mathrm{P}\Omega^{\varepsilon}_n(q)$ ($n \ge 7$)
Since $x$ lifts to an element of order $3$  we have $\dim X \le 2$ for all $X$  in \eqref{eqn:decomp}. 
If there is a totally singular $2$-dimensional $X$ in \eqref{eqn:decomp}, then we can reduce to the case $\PSL_2(q)$ for $q \ge 4$ by restricting to $X$. If $q=2$, then we note that $\Omega^{\varepsilon}_k(2)$ is solvable for $k \le 4$;  so choose another irreducible subspace $Y$ in \eqref{eqn:decomp}. If $Y$ is nondegenerate, then we reduce to $\mathrm{P}\Omega_5(2)$ or $\mathrm{P}\Omega^{\pm}_6(2)$ by restricting to $(X \oplus X') \perp Y$ while if $Y$ is totally singular, then  we can reduce to  $\PSL_3(2)$ or $\PSL_4(2)$ by restricting to $X \oplus Y$. If there is a $2$-dimensional nondegenerate $X$ in \eqref{eqn:decomp}, then let $\widetilde{X}$ denote the sum of the remaining subspace in \eqref{eqn:decomp}.  We can reduce to the case $\mathrm{P}\Omega^{\varepsilon}_{n-2}(q)$ by restricting to $\widetilde{X}$  if $x$ acts nontrivially on $\widetilde{X}$. If $x$ acts trivially on $\widetilde{X}$, then we can reduce to the case of $\mathrm{P}\Omega_5(q) = \PSp_4(q)$ (by restricting to $X \perp Y$ where $Y$ is any $3$-dimensional nondegenerate subspace of $\widetilde{X}$). It remains to consider the case where each $X$ is $1$-dimensional. Similarly, we can reduce to the case of $\mathrm{P}\Omega^{\varepsilon}_{n-1}(q)$ or $\mathrm{P}\Omega^{\varepsilon}_{n-2}(q)$ and since $n\ge 7$ it is clear that $(G,x)$ cannot be a minimal counterexample here either.

 (B.1.4.5) $G_{0}\cong \PSL_{3}^{\varepsilon}(q)$, with $(3,q-\varepsilon)=3$, $x$ irreducible and $x$ does not lift to an order $3$ element in $\GL^{\varepsilon}_3(q)$. If this is the case, then $x^3= \lambda I_3$ for some $\lambda \in \mathbb{F}_{q^u}$ ($u=1$ for $\varepsilon=1$, otherwise $u=2$). We note that $\lambda$ has no cube root in $\mathbb{F}_{q^u}$ otherwise we can choose $\mu \in \mathbb{F}_{q^u}$ such that $\mu^3= \lambda^{-1}$, in which case  $(\mu x)^3 =1$ and $\mu x$ is a lift of $x$ of order $3$. In particular, $x$ has rational canonical form
   \[\left(\begin{array}{ccc}0 & 0 & \lambda \\1 & 0 & 0 \\0 & 1 & 0\end{array}\right)\]
and $\langle x, \PSL_3^{\varepsilon}(q) \rangle = \PGL_3^{\varepsilon}(q)$ since we cannot write $x = x_0 (\mu I_3)$ for $x_0 \in \SL^{\varepsilon}_3(q)$ and $\mu I_3 \in Z(\GL_3^{\varepsilon}(q))$  (for $\det x = \lambda$, $\det (\mu x_0) = \mu^{3}$ and $\lambda$ has no cube root in $\mathbb{F}_{q^u}$).

%Since $G\leq G_{0}^{\ast}$
%we either have $G=G_{0}\cong \PSL_{3}^{\varepsilon}(q)$ or $G=\langle
%x,G_{0}\rangle \cong \PGL_{3}^{\varepsilon}(q)$, in which case $
%(q-\varepsilon,3)=3$. By the discussion in B.1.4, $x$ acts irreducibly and $
%x^{3}$ acts as a scalar on $V$.

 Now we choose $y\in G_{0}$. If $r=2$, then $y$ is chosen so that its preimage  in $\GL_{3}^{\varepsilon}(q)$ has a Jordan normal form $J_{3}$  (a regular unipotent element of order $4$).   If $r>3$ we choose $y$ as follows. For $\varepsilon =1$, take $y$ to be the $ 2$-part of the image in $\PSL_{3}(q)$ of $\diag[A, 1/\det (A)] \in \SL_3(q)$
%$$( 
%\begin{array}{cc}
%A & 0 \\ 
%0 & 1/\det A
%\end{array}
%) \in \SL_{3}(q),$$ 
 where $A\in \GL_{2}(q)$ is  a Singer cycle (of order $q^{2}-1$). For $\varepsilon =-1$, take $y$ to be  the $2$-part of the image in $\PSU_{3}(q)$ of $\diag[  a,a^{-q},a^{q-1}] \in \SU_{3}(q)$, where $a\in  F_{q^{2}}^{\times}$ has order $q^{2}-1$. In both cases $o(y) =(q^{2}-1)_{2} \geq 8$.

 Let $M_{1},\ldots,M_{k}$ be the maximal subgroups of $G = \langle x, \PSL_3^{\varepsilon}(q) \rangle = \PGL^{\varepsilon}_3(q)$ containing $y$. We aim to   show  that $|x^{G}| >|x^{G}\cap \bigcup_{i=1}^{k}M_{i}|$, in which case there exists $g\in  G $ such that $x^{g}\notin \bigcup_{i=1}^{k}M_{i}$. It would then follow that $\langle  x^{g},y\rangle= \langle x,y^g \rangle =G$ and hence that $(G,x)$ is not a minimal counterexample.

  Let $Y_{1},\ldots,Y_{l}$ be representatives  of distinct conjugacy classes of $M_{1},\ldots,M_{k}$ and  let $n_{i}$ be the number of distinct conjugates of $Y_{i}$ containing $y$.  First note that 
\begin{equation*}
|x^{G}\cap \bigcup_{i=1}^{k}M_{i}|\leq \sum_{i=1}^{k}|x^{G}\cap
M_{i}|=\sum_{i=1}^{l}n_{i}|x^{G}\cap Y_{i}|.
\end{equation*}
By double counting, we have $n_{i}=| y^{G}\cap Y_{i}|| G:N_{G}(Y_{i}) |/ |y^{G}|$.
Therefore the inequality $|x^{G}| >|x^{G}\cap \bigcup_{i=1}^{k}M_{i}|$ will  follow if 
\begin{equation}
|x^{G}|>\sum_{i=1}^{l}\frac{| y^{G}\cap Y_{i}|}{
| N_{G}(Y_{i}) |}| C_{G}(y) | |x^{G}\cap Y_{i}|\text{.}
\label{Eq_CountingArgumentInequality}
\end{equation}

%When estimating the terms in the last inequality, both possibilities for $G$
%are accounted for by taking the worst possible value. We have

We aim to bound above the right-hand side of \eqref{Eq_CountingArgumentInequality}
%\begin{equation} \label{Eq_CountingArgumentInequality_2}
%  \sum_{i=1}^{l}\frac{| y^{G}\cap Y_{i}|}{
%| N_G(Y_{i} )|}| C_{G}(y) | |x^{G}\cap Y_{i}|
%\end{equation}
by a quantity less than $|x^G|$.
 Since $x$ is semisimple it can be diagonalized over the  algebraic closure of $\mathbb{F}_{q}$. Moreover, $x$ acts  irreducibly on $V$. These two facts imply that  $|C_{G}(x)|=(q^{2}+\varepsilon q+1)$   and $|x^{G}| = q^{3}(q- \varepsilon)(q^2-1)$.  % 2. $| C_{G}(y)| \leq |C_{\PGL_{3}^{\varepsilon}(q)}(y)|$.

%2. By maximality of $Y_{i}$ we have 
%\[\frac{| y^{G}\cap
%Y_{i}|}{| N_{G}(Y_{i}) |}=\frac{
%| y^{G}\cap Y_{i}|}{| Y_{i}|}<1.\]

%3. It turns out that for each possible $G$ there is only one type of $Y_{i}$, which we denote $Y$. We have $l\leq  c_{Y}$ where $c_{Y}$ is the number
%of $G_{0}$-classes of subgroups of type $Y$ in $G_{0}$ (see \cite[(3.2.1) and (3.2.3)]{KL}).

%Combining 1--3 above and \eqref
%{Eq_CountingArgumentInequality}, if we can show that 
%\begin{equation}
%q^{3}(q^{2}-1) (q-\varepsilon )
%>c_{Y}|C_{\PGL_{3}^{\varepsilon}(q)}(y)||x^{G}\cap Y|\text{,}
%\label{Eq_CountingArgumentInequality_2}
%\end{equation}
%then we are done.

Using the lists of maximal subgroups of $\PGL_{3}^{\varepsilon}(q)$ in \cite{BHR} (see also \cite[Theorem 6.5.3]{GLS}) and since we may assume that each $Y_i$ contains elements of order $o(y)$ and must not be almost simple (by minimality of $G$), we find in each case that there is at most one possible isomorphism type for $Y_i$. Also, it is easy to see that $N_G(Y_i)=Y_i$ since these subgroups are maximal subgroups of $G$.

%The same list also applies for $G\cong
%\PGL_{3}^{\varepsilon}(q)$ since a maximal subgroup of $\PGL_{3}^{\varepsilon}(q)$
%is either of the form $M.3$ where $M$ is a maximal subgroup of $
%\PSL_{3}^{\varepsilon}(q)$ or belongs to a very short list of novelties \cite{BHR}, which do not satisfy our requirements.

%Let $r=2$ and first suppose that $G\cong \PSL_{3}^{\varepsilon}(q)$. Here $Y 
%\cong \PGU_{3}(2)$. By \cite[Tables 3.5A and 3.5B]{KL}, a subgroup of type $
%\PGU_{3}(2)$ occurs as a $\mathcal{C}_{8}$-subgroup of $\PSL_{3}(4)$  and as a $\mathcal{C}_{5}$-subgroup of $\PSU_{3}(q)$. Using the values of $c_{8}$ and $c_{5}$ given with these
%tables, we find that $c_{Y}\leq 3$. It is easily verified  that $\PGU_{3}(2)$ has $80$
%elements of order $3$. Finally, $C_{\PGL_{3}^{\varepsilon}(q)}(y)$
%can be evaluated using \cite[Lemma (3.18)]{Bur2} and $y=J_{1,3}$. This
%implies that $|C_{\PGL_{3}^{\varepsilon}(q)}(y)|=q^{2}$. Thus, 
%\eqref{Eq_CountingArgumentInequality} is satisfied if $q^{3}(q^{2}-1) (q-\varepsilon ) >240q^{2}$. 

 If $r=2$ and $q=4$, then we have $\PSL_{3}(4)$, $\PSU_{3}(4)\in CVL3$, so suppose that $r=2$ and $q \ge 8$.  The only irreducible maximal subgroups $Y_i$ containing $y$ (a regular unipotent element) that are not almost simple are isomorphic to $\PGU_3(2)$ in the unitary case and there are no such groups in the linear case (see also \cite[Proposition 4.5.3]{KL}). There are at most three such $G$-classes of maximal subgroups by  \cite[Proposition 4.5.3]{KL}  and $|x^G \cap Y_i|$ is at most $80$ (the number of order $3$ elements in $\PGU_3(2)$). It is well known that $|C_{\PGU_3(q)}(y)| = q^{2}$ and so  the right-hand side of \eqref{Eq_CountingArgumentInequality} is at most $240q^{2}$ and therefore \eqref{Eq_CountingArgumentInequality} holds since $q \ge 8$.

Now suppose that $r \ge 5$ and recall that $o(y) = (q^{2}-1)_2 \ge 8$. Here the only possibility is that $Y_i$ is of type $(q-\varepsilon )\wr S_{3}$. 
%($\mathcal{C}_{2}$ in \cite[Tables 3.5A and 3.5B]{KL}). 
%By \cite[Proposition 4.2.9.(II)]{KL},  Since $(3,q-\varepsilon )=3$ and $
%G\cong \PGL_{3}^{\varepsilon}(q)$ we have  % $Y=[(q-\varepsilon )^{2}/3].S_{3}.3$, so
 We make the crude estimate $|x^G \cap Y_i| \le |Y_i|~=6(q-\varepsilon )^{2}$ and note that $  |C_{\PGL_{3}^{\varepsilon}(q)}(y)|= q^{2}-1$  (since the eigenvalues of $y$  are distinct and so its centralizer in $\PGL^{\varepsilon}_{3}(q)$ is a maximal torus). By \cite[Proposition 4.2.9]{KL}, there is only one $G$-class of such maximal subgroups. Thus the right-hand side of  \eqref{Eq_CountingArgumentInequality} is at most $6(q-\varepsilon )^{2}(q^{2}-1)$, which is less than $|x^G|=q^{3}(q^{2}-1) (q-\varepsilon )$ since $q\geq 5$ (for $\varepsilon=+$ and $-$). That is,  \eqref{Eq_CountingArgumentInequality} holds and $(G,x)$ is not a minimal counterexample in this case either.

 (B.2) \emph{Field automorphisms}. Suppose that $x$ is a field automorphism. Since $x$ has order $3$, we have $q=r^{a}$ where $a$ is divisible by $3$ and in particular $q\geq 8$. Let $q_{0}=q^{1/3}$. As in Case (B.2) of Section \ref{Sect_ProofFirstTho(x)=2},
%the proof of Theorem \ref{Th_InAlmostForxp=2,3ExistsOdduySt<x,y>NS}, $
%o(x) =2$ part,
  choosing a suitable $\SL_{2}(q)$  subgroup of $G_{0}$, eliminates all cases except when  $G_0$ is $\PSL_{2}(q)$ or a   Suzuki--Ree group.

 (B.2.1) $G_{0}\cong \PSL_{2}(q)$. By Lemma \ref{Lem_CentralizerOfFieldAutOfPSL_2(q)}(i) we have $C_{G_{0}}(x)  \cong \PSL_{2}(q_{0})$. Therefore $3$ divides $|  G_{0}:C_{G_{0}}(x) |$ and $  Z( \PSL_{2}( q_{0}) ) =1$ for all $q_{0}$. Hence, by  Lemma \ref{Lem_CentralizerCosetArgument} there exists a $G$-conjugate of $  C_{G_{0}}(x)$  that is normalized but not centralized by $x$, and  $(G,x)$ cannot be a minimal counterexample for $q_{0}\geq 4$. If $q_0 \le 3$, then we have $G_0 \in CVL3$.

 (B.2.2) $G_{0}\cong {^{2}}F_{4}(2^{a})$ ($a\geq 3$ odd). By \cite[Main Theorem]{2F4max}, $G_{0}$ has a maximal subgroup $\PGU_{3}(2^{a}):2$. By the corollary to the main theorem of \cite{2F4max}, $x$  normalizes and does not centralize such a maximal subgroup and thus $(G,x)$ cannot be a minimal counterexample. 
 
 (B.2.3) $G_{0}\cong {^{2}}G_{2}(3^{a})$ ($a\geq 3$ odd). By \cite[Proposition 4.9.1]{GLS}, we have $O^{3^{\prime}}( C_{G_{0}}(x)) \cong { ^{2}}G_{2}(q_{0})$ and $
 C_{G_{0}}(x) \cong ( O^{3^{\prime}}(C_{G_{0}}(x))) ^{\ast}\cong ( {^{2}}G_{2}(q_{0})) ^{\ast}\cong {^{2}}G_{2}(q_{0})$. In particular, let $t\in C_{G_{0}}(x)$ be an involution, so that $t\in G_{0}$ and $x$ centralizes $t$.
Since $x$ acts on $G_{0}$ and $x$ centralizes $t$, it follows that $x$ acts
on $C_{G_{0}}(t)$ and hence also on $O^{3^{\prime}}(C_{G_{0}}(t))$. By \cite[Table 4.5.1]{GLS}, 
%($G_{0}^{\ast}\cong G_{0}$),
  $O^{3^{\prime}}( C_{G_{0}}( t)) \cong \PSL_{2}(3^{a})$. Note that $x$ does not centralize $  O^{3^{\prime}}( C_{G_{0}}( t))$ and so $(G,x)$ cannot be a minimal counterexample. To see this, note that $  C_{G_{0}}(x)\cong {^{2}}G_{2}(q_{0})$ does not contain an element of order $ (3^{a}+1)/2$ (see the list of maximal tori of ${^{2}}G_{2}(q_{0})$ \cite[Section 2.2]{KSprime}), while $\PSL_{2}(3^{a})$ does. 
%To see this, let $d$ be a
%primitive prime divisor of $3^{2a}-1$, so that $d$ divides $(3^{a}+1)/2$ but not $3^{a}-1$. Now observe that no maximal
%subgroup of ${^{2}}G_{2}(q_{0})$ is
%divisible by $d$ (see \cite[Theorem C]{G2max}).

 (B.2.4) $G_{0}\cong {^{2}}B_{2}(2^{a})$ ($a\geq 3$ odd). We derive a  contradiction by proving the existence of an involution $y\in G_{0}$  satisfying $\langle x,y\rangle = \langle  G_{0},x\rangle$ (under the assumption that $(G,x)$ is a minimal counterexample).  We bound the number $  | \Gamma |$ of involutions $y\in G_{0}$ such that $\langle x,y\rangle \neq \langle G_{0},x\rangle $ as in \cite[Lemma 3.12]{Guest2}:
\begin{align*}
| \Gamma | \leq & (q_{0}^{3}-1) (q_{0}^{2}+1) + q_{0}^{2}(q_{0}^{3}-1)(q_{0}^{2}+1) /2 +\\
&2q_{0}^{2}(q_{0}^{3}+( 2q_{0}^{3}) ^{1/2}+1)(q_{0}+( 2q_{0}) ^{1/2}+1) (
q_{0}-1) +q_{0}^{2}(q_{0}^{2}+1) (q_{0}^{2}-1).
\end{align*}
 The total number of involutions in $G_{0}$ is $(q^{2}+1) (q-1)$  (see \cite[Propositions 1 and 7]{2B2}). The  right-hand side of the last inequality is less than $(q^{2}+1) (q-1)$ unless $q_{0}=2$. If $q_{0}=2$, then  we have $G_0 \cong {^{2}}B_{2}(8)\in CVL3$.

 (B.3) \emph{Graph automorphism}. Suppose that $x$ belongs to a $G_{0}^{\ast}$-coset of $\Aut (G_0)$ represented by a graph automorphism. We can argue in a similar way as in Case (B.3) of Section \ref{Sect_ProofFirstTho(x)=2} to
% the proof of Theorem \ref
%{Th_InAlmostForxp=2,3ExistsOdduySt<x,y>NS}, $o(x) =2$ part, 
 eliminate all cases except $G_{0}=D_{4}(q)$ and $G_{0}={}^{3}D_{4} (q)$ where $q=q_{0}^{3}$. 
% In the following, an unspecified $
%G_{0}$ applies to both $D_{4}(q) $ and ${}^{3}D_{4}(q)$.

 (B.3.1) $G_0 \cong D_{4}(r^{a})$ or ${^3}D_{4}(r^a)$, $r\neq 3$. By \cite[Proposition 1.4.1]{D4max} and \cite[Main Theorem]{3D4max}  (see also \cite[Table 4.7.3A]{GLS}) there are two $G_{0}^{\ast}$-classes of graph automorphisms and
 either $C_{G_{0}}(x) \cong  G_{2}(q)$ or $  C_{G_{0}}(x) \cong \PGL_{3}^{\varepsilon}(q)$,  where $\varepsilon$ is determined by the congruence $q\equiv  \varepsilon  \pmod{3}$. Observe that in  both cases $C_{G_{0}}(x)$ is almost simple provided $q \ne 2$, and in particular  has a trivial centre. Moreover, $3$ divides $|G_{0}:G_{2}(q)  |$ and $|G_{0}:\PGL_{3}^{\varepsilon}(q)  |$. Thus we obtain a  contradiction to the minimality of $(G,x)$ by Lemma \ref{Lem_CentralizerCosetArgument} for $q \ne 2$. For $q=2$ we have $G_0 \in CVL3$.

 (B.3.2) $G_0 \cong D_{4}(3^{a})$ or ${^3}D_{4}(3^a)$.  By  \cite[Proposition 4.9.2(g)]{GLS}, $x$ is $G_{0}^{\ast}$-conjugate to $\gamma ^{\pm 1}$ or $(\gamma z)^{\pm 1}$ where $\gamma \in \Gamma _{G_{0}}$ if $G_{0}\cong D_{4}(q)$ and $\gamma \in \Phi _{G_{0}}$ if $G_{0}\cong {}^{3}D_{4}(q)$, and $  z $ is a nontrivial element of a long root subgroup $Z=C_{\overline{X}  _{-\alpha _{\ast}}}(\sigma ) \leq G_{0}$, where $\sigma $ is  the Steinberg endomorphism used for the $\sigma $-setup defining $G_{0}$ (\cite[Example 3.2.6]{GLS}). Clearly, we may assume that $x= \gamma$ or $\gamma z$.  If $x=\gamma $, then, by  \cite[Proposition 4.9.2(b5)]{GLS}, we have $C_{G_{0}}(x) \cong G_{2}(q)$ and we obtain  a contradiction by Lemma \ref{Lem_CentralizerCosetArgument} as in (B.3.1). Now suppose that $x=\gamma z$. By \cite[Proposition 4.9.2]{GLS},   we have  $C_{G_{0}}(x)  =C_{C_{G_{0}}(\gamma )}(z)$. Now  $z\in C_{G_{0}}(\gamma ) \cong G_2(q)$ and $G_2(q)$ has trivial centre,  thus $C_{G_{0}}(x)  =C_{C_{G_{0}}(\gamma )}(z) \lneqq  G_2(q) \cong C_{G_{0}}(\gamma)$. However, since $\gamma $ and $z$ commute, the element $x=\gamma z$ normalizes $C_{G_{0}}(\gamma ) \cong G_{2}(q)$. Now $C_{G_{0}}(\gamma )$ is not centralized by $z$, so
 %($C_{C_{G_{0}}(\gamma)}(z) <C_{G_{0}}(\gamma )$),  
  $x=\gamma z$ normalizes but does not centralize $C_{G_{0}}(\gamma ) \cong G_{2}(q)$, which contradicts the minimality of $(G,x)$.

 (B.4) \emph{Graph-field automorphisms.} Suppose that $x$ is a graph-field automorphism. Arguments similar to those used in  (B.4) in Section \ref{Sect_ProofFirstTho(x)=2},
%the proof of Theorem \ref{Th_InAlmostForxp=2,3ExistsOdduySt<x,y>NS}, $
%o(x) =2$ part, 
 eliminate all cases except the case $G_{0}=D_{4}(q_0^{3})$. By \cite[Proposition 4.9.1]{GLS} we have $O^{r^{\prime}}(C_{G_{0}}(x)) \cong {}^{3}D_{4}(q_{0})$ and  $C_{G_{0}^{\ast}}(x) \cong ( O^{r^{\prime}}(C_{G_{0}}(x) ) )^{\ast} \cong {^3}D_{4}(q_{0})$. 
%But $({}^{3}D_{4}(q_{0}) ) ^{\ast}\cong {^3}D_{4}(q_{0})$ \cite[Theorem 2.5.12(c)]{GLS} and so $C_{G_{0}^{\ast}}(x) \cong {^3}D_{4}(q_{0}) $. 
Hence $C_{G_{0}^{\ast}}(x)$ is simple and since $C_{G_{0}}(x) \trianglelefteq C_{G_{0}^{\ast}}(x)$, we have $C_{G_{0}}(x) =C_{G_{0}^{\ast}}(x) \cong{^3}D_{4}(q_{0})$. Now $3$ divides $| D_{4}(q):{^3}D_{4}(q_{0})|$ and ${^3}D_{4}( q_{0})$ has trivial centre, so we may apply Lemma \ref{Lem_CentralizerCosetArgument} to eliminate this case.

(C) \emph{Sporadic groups.}

  We may assume that $G=G_{0}$ in all cases since $| \mathrm{Out}(G_{0})| =1$ or $2$ and $x$ has order $3$. In order to eliminate  a given $G_{0}$, it is sufficient to show that $x$ belongs to an almost simple subgroup $H \le G$. This is immediate  when $G_{0}$ has a single conjugacy class of elements of order $3$ and there  exists almost simple subgroup $H$ of  order divisible by $3$. More  generally, we can eliminate a conjugacy class $C$ of order $3$ elements if there exists a unique conjugacy class of elements of order $3k$ that powers up to $C$ and there exists an almost simple subgroup $H$ that contains elements of order $3k$.   
  %  let $C_{1},\ldots,C_{k}$ be all of the conjugacy classes of elements  of order $3$ in $G_{0}$. If there exist conjugacy classes $D(C_{1}),\ldots,D(C_{k})$ such that for all $1\leq i\leq k$, $  D(C_{i})$ powers up to $C_{i}$, and any other class $\widetilde{  D}$ that powers up to $C_{i}$ consists of elements of different order from the order of elements in $D(C_i)$, and if there exist almost simple  subgroups $H_{1},\ldots,H_{k}$, such that $H_{i}$ contains an element of order $  o( D(C_{i}) )$, we are done.
   We use \cite{Atlas,wwwatlas} to do this. When  such an argument cannot be established, we use \textsc{Magma}.
\end{proof}

% \bibliography{mbibliography2}
%\bibliographystyle{amsplain}

\end{document}